\setlist[itemize]{leftmargin=6mm} 
\numberwithin{equation}{section}
\theoremstyle{plain}
\newtheorem{theorem}{Theorem}[section]
\newtheorem{proposition}[theorem]{Proposition} 
\newtheorem{lemma}[theorem]{Lemma} 
\newtheorem{corollary}[theorem]{Corollary}
\newtheorem{conjecture}{Conjecture} 
\theoremstyle{definition}
\newtheorem{definition}[theorem]{Definition} 
\newtheorem{remark}[theorem]{Remark} 
\newtheorem{example}[theorem]{Example}
\renewcommand{\S}{\mathbb{S}}
\newcommand{\R}{\mathbb{R}}
\newcommand{\scalarp}[2]{\langle#1,#2\rangle}
\newcommand{\cH}{\mathcal{H}}
\newcommand{\cA}{\mathcal{A}}
\newcommand{\cF}{\mathcal{F}}
\newcommand{\cK}{\mathcal{K}}
\newcommand{\be}{\mathbf{e}}
\newcommand{\sH}{\mathscr{H}}
\renewcommand{\tilde}{\widetilde}
\renewcommand{\div}{\mathrm{div}}
\newcommand{\norm}{\varphi}
\newcommand{\p}{\partial}
\newcommand{\dist}{\mathrm{dist}}
\newcommand{\Lip}{\mathrm{Lip}}
\newcommand{\intpart}[1]{\left\lfloor #1 \right\rfloor} %integer part of a real number
\newcommand{\cl}[1]{\overline{#1}}
\renewcommand{\bar}{\overline}
\newcommand{\black}{\color{black}}
\date{\today}
\title[Crystalline elastic flow]{Crystalline elastic flow 
of polygonal curves: long time behaviour and convergence to stationary solutions}
\author[G. Bellettini]{Giovanni Bellettini} 
\address[G.Bellettini]{University of Siena, Via Roma 56, 53100 Siena, Italy 
\& 
International Centre for
Theoretical Physics (ICTP), Strada Costiera 11, 34151 Trieste, Italy}
\email{giovanni.bellettini@unisi.it}
\author[Sh. Kholmatov]{Shokhrukh Yu. Kholmatov}
\address[Sh. Kholmatov]{University of Vienna,Oskar-Morgenstern-Platz 1, 
1090  Vienna, Austria 
\&  Samarkand State University, University boulevard 15, 
140104 Samarkand, Uzbekistan}
\email{shokhrukh.kholmatov@univie.ac.at}
\author[M. Novaga]{Matteo Novaga}
\address[M. Novaga]{University of Pisa, Largo Bruno Pontecorvo 5, 56127  
Pisa, Italy}
\email{matteo.novaga@unipi.it}
\begin{document}

\begin{abstract}
Given a planar crystalline anisotropy, we study the crystalline elastic flow of immersed 
polygonal curves, possibly also unbounded.
Assuming that the segments evolve by parallel translation 
(as it happens in the standard crystalline curvature flow), 
we prove that a unique regular flow exists
 until a maximal time when some segments having zero crystalline curvature 
disappear. Furthermore, for 
closed polygonal curves, we analyze the behaviour 
at the maximal time, and show that 
it is possible
to restart  the flow finitely many times, 
yielding a globally in time evolution,
 that preserves the index of the curve. Next, we investigate the 
long-time properties of the flow using a Lojasiewicz-Simon-type inequality, 
and show that, as time tends to infinity, the flow fully 
converges to a stationary curve. We also provide a 
complete classification of  the stationary solutions and a 
partial classification of the translating solutions in the case of 
the square anisotropy. 

%Given a crystalline anisotropy  in $\R^2,$ we study the crystalline elastic flow of immersed polygonal curves, possibly also unbounded. Assuming that the segments evolve by parallel translation (as it happens in the standard crystalline curvature flow), we prove that a unique regular flow exists  until a time $T^\dag$ when some segments having zero $\norm$-curvature disappear. Furthermore, for closed polygonal curves, we analyze the behaviour at $T^\dag$, and show that it is possible to restart  the flow finitely many times, yielding a globally in time evolution, that preserves the index of the curve. Next, we investigate the long-time properties of the flow using a Lojasiewicz-Simon-type inequality, and show that, as time tends to infinity, the flow fully converges to a stationary curve. We also provide a complete classification of  stationary solutions and a partial classification of the translating solutions in the case of the square anisotropy. 

\end{abstract}

\keywords{Crystalline curvature, polygonal flow, crystalline elastic functional, crystalline elastic flow,   crystalline Lojasiewicz-Simon inequality}

\subjclass[2020]{53E10, 53E40, 46N20}

\maketitle

%\tableofcontents

\section{Introduction}

Geometric evolution equations driven by curvature, especially flows governed by anisotropic energies, play a central role in the analysis of interface motion and shape optimization and have been widely studied in the context of material science (see e.g. \cite{BBBP:1997, Cahn:1991, CHT:1992, GN:2000, Herring:1951, Herring:1999, KL:2001, Mullins:1956}). When the anisotropy is crystalline -- i.e., the interfacial energy density is piecewise linear -- the corresponding geometric flows 
exhibit non-smooth and non-local behaviors, and the evolution is 
usually restricted to the family of the so-called admissible polygonal curves, whose segments 
translate in 
the normal direction with velocity depending on their crystalline curvature  
(see e.g. \cite{Andrews:2002,CHT:1992,ELM:2021,GGM:1998, GP:2022, IS:1998, Taylor:1978} and the references therein).

In the current paper  we focus on the gradient flow of the  crystalline elastic energy of planar polygonal curves, namely the gradient flow associated to the energy functional 
$$
\cF_\alpha(\Gamma) = \int_\Gamma 
\left(1+\alpha\left(\kappa_\Gamma^\norm\right)^2
\right)\norm^o(\nu_\Gamma)\,d\sH^1, 
$$
including both an anisotropic length and a $\norm$-curvature term. 
Here  $\alpha>0$, $\norm$ is a crystalline anisotropy in $\R^2,$ $\norm^o$ is its dual, $\Gamma$ is $\norm$-admissible polygonal curve (possibly having self-intersections), $\nu_\Gamma$ is the unit normal of $\Gamma,$ defined $\sH^1$-a.e. and $\kappa_\Gamma^\norm$ is the crystalline curvature.

When $\alpha=0,$ the functional $\cF_0$ gives the anisotropic length,
whose gradient flow is well-studied 
in the literature (see e.g.  \cite{Bellettini:2004_ha, BM:2009, GP:2022}). As formulated originally by Taylor \cite{Taylor:1992}, the segments of $\Gamma$ (locally admissible with the Wulff shape $W^\norm=\{\norm\le1\},$ Section \ref{subsec:admissible_curve}), translate in 
the normal direction and a general form of the evolution  equation is 
\begin{equation}\label{izuv7bn}
V_S = g(\nu_S,\kappa_S^\norm) \quad \text{on segment $S.$}
\end{equation}
Here $V_S$ is the translation velocity and  $g:\S^1\times\R^2\to\R$ is an appropriate function. A natural choice $g(\nu_S,\kappa_S^\norm)=-\kappa_S^\norm$ is developed  in many papers, but unless $W^{\norm^o}$ is cyclic, i.e., inscribed to a circle, the 
corresponding flow starting from $W^\norm$ does not shrink self-similarly. 
To obtain the natural self-shrinking property of $W^\norm$ one has to choose $g(\nu_S,\kappa_S^\norm) =-\norm^o(\nu_S)\kappa_S^\norm$ which in some sense suggests that segments translate not in the normal, rather in the intrinsic direction of the Cahn-Hoffman vector on $S$ (see e.g. \cite{BP:1996} in the case of 
a strictly convex smooth, i.e., regular,
 anisotropy). All in all, the evolution \eqref{izuv7bn} is uniquely represented by a system of ODEs. 
Interestingly, a similar characterization of crystalline curvature flow does
 not exist in higher dimensions mainly due to the facet-breaking  phenomenon\footnote{One may observe facet-breaking or facet-bending phenomena also in the planar case if we add an external force to equation \eqref{izuv7bn} \cite{GR:2008}.} \cite{BNP:1999}.  

In the Euclidean setting, i.e., when $\norm$ is Euclidean, the evolution equation solved by the elastic flow $\{\Gamma(t)\}$ of curves, parametrized by a smooth family of immersions $\gamma:[0,T)\times\S^1\to\R^2,$ 
reads as 
\begin{equation}\label{Xhct6bb}
\begin{cases}
\p_t\gamma = -2\alpha \p_{ss}\vec \kappa -  2\alpha|\vec\kappa|^2\vec\kappa + \vec\kappa,\\
\gamma(0,\cdot) = \gamma_0,
\end{cases}
\end{equation}
where $\vec\kappa= \kappa\nu$ is the curvature vector of the curve $\gamma(t,\cdot)$ at time $t,$ $\gamma_0$ is the 
initial immersion and $\p_{ss}\vec\kappa$ is second derivative of the curvature vector in an arclength parametrization (see \cite{DKSch:2002, Mantegazza:2002, MP:2021_cvpde}). 
Observe that, since the equation is of fourth order, 
no general comparison principles are expected for this elastic flow. 
It is known \cite{MP:2021_cvpde} that the evolution starting from a closed curve 
globally exists and, as time tends to infinity, the flow  stays in a compact region of $\R^2$ and  converges to a stationary solution of \eqref{Xhct6bb}. Also, several results are known for unbounded curves (see 
e.g. the recent survey \cite{MPP:2021}). On the other hand,
various problems are still open, such as 
the general shapes of stationary solutions.  
An interesting open problem,
to our knowledge set forth by G. Huisken,
is whether or not the flow starting from a curve sitting
in the upper half-plane may be, 
at some time during the evolution, completely contained in the lower half-plane, which somehow resembles translating solutions like grim reaper in the mean curvature flow.

Without zero-order terms, the evolution equation \eqref{Xhct6bb} becomes 
\begin{equation*}
V = - \p_{ss}\kappa,
\end{equation*}
which is called a \emph{surface diffusion flow}. This equation is often considered in the anisotropic setting as 
\begin{equation*}%\label{Fhstc6b}
V = -\p_s (M_0(\nu)\p_s\kappa^\norm),
\end{equation*}
where now $\norm$ is a regular anisotropy, $\kappa^\norm$ is the scalar anisotropic curvature and $M_0>0$ is a mobility function \cite{CT:1994, FFLM:2011, GG:2023}. 
This equation can be defined also in the crystalline setting after restricting the evolution to a specific class of polygons 
(sometimes called admissible) and after reducing the evolution to a 
system of ODEs, see e.g. \cite{CRCT:1995}.
Unlike the planar crystalline curvature flow, 
planar surface diffusion flow seems to exhibit facet-breaking, see \cite{GG:2023}.  

In the current paper we study 
short and long time properties of the gradient flow 
associated to the functional $\cF_\alpha$ with a fixed $\alpha>0.$ 
As in the crystalline curvature setting, 
the regular flow is defined for curves $\Gamma$ admissible with the Wulff shape 
$W^\norm$;  
during the evolution we make the assumption  that 
segments $S$ of $\Gamma$ translate in the normal direction 
with velocity equal to $-\norm ^o(\nu_S) \delta \cF_\alpha,$ 
where $\delta\cF_\alpha$ is a formal notation 
to indicate
the first variation of $\cF_\alpha$.
Also in this case, 
one gets that Wulff shapes shrink self-similarly
(Example \ref{ex:evol_Wulff}). As usual, the evolution is formulated 
as a system of ODEs that govern the signed Euclidean distances 
(called here also signed heights) of each segment along its normal direction (Definition \ref{def:cryst_elas_flow}).
As already said, 
we are supposing that neither new edges do appear nor segments break or bend
along the flow.
This simplifying assumption
is mainly due to the fact that, in general, we miss 
a stability property of the flow with respect to insertion
of  small segments (possibly having 
zero $\norm$-curvature) near the vertices of a given initial curve.

A main contribution of the present  paper is the rigorous construction and analysis of the unique polygonal  crystalline elastic flow. 
More specifically, we prove short-time existence and uniqueness of the flow (Theorem \ref{teo:existence}), and show that the evolution continues through a restarting mechanism 
after some segments with zero $\norm$-curvature vanish -- 
thus guaranteeing global existence and uniqueness in time
(Theorem \ref{teo:continue_flows}). Segments with nonzero $\norm$-curvature,
instead, do not vanish.
Furthermore, by means of a crystalline version of a 
Lojasiewicz-Simon inequality (Propositions \ref{prop:loja_simon_ineq} and \ref{prop:loja_simon_ineqII}), we investigate the long-time behavior of solutions 
starting from a closed polygonal curve: as in the 
Euclidean case \cite{MP:2021_cvpde}, we prove that the flow converges to a stationary solution as $t\to+\infty$ (Theorems \ref{teo:conver_stationar_sol} and \ref{teo:long_time_general}).

We are also interested in 
a classification of  special solutions, such as stationary and translating ones. 
These problems 
are quite difficult in a general crystalline setting, and 
in the present paper we restrict to consider
the case of a square Wulff shape: here we are able to establish 
a complete classification of stationary solutions and a characterization of translating solutions under 
some appropriate assumptions (Section \ref{sec:square_anisotropy}). 
We remark that, unlike in the Euclidean elastic case, 
our crystalline elastic flow preserves convexity, 
a property that could be probably
 related to our assumptions of facets non-breaking 
and prohibition of spontaneous edge creation near vertices 
(Corollary \ref{cor:convex_evolution}). 
%Thus, if we approximate the Euclidean norm with crystalline norms, we cannot expect convergence of the crystalline flows to the isotropic elastic flow, instead, it leads to a modified Euclidean evolution that retains convexity. 

The paper is organized as follows. In Section \ref{sec:preliminary} we 
introduce some preliminaries.
The anisotropic elastic functional and the  crystalline elastic flow are introduced in Section \ref{sec:willmore_funca}. Section \ref{sec:existence} is dedicated to the investigation of existence, uniqueness and 
restart of the crystalline elastic flow.
For 
convenience of the reader, in Section \ref{sec:example}
we exhibit
 some explicit examples 
(evolution of Wulff shapes and grim reaper-type solutions).
Stationary solutions and Lojasiewicz-Simon-type properties of stationary curves are studied in Section \ref{sec:stationar_solutino}
(and are needed in 
Section \ref{sec:long_time_behaviour}), 
where  we prove the full convergence of the flow to a stationary solution 
as time converges to infinity. In Section \ref{sec:square_anisotropy},
 designed for the square anisotropy, 
we provide a classification of some special 
-- stationary and translating -- solutions. 
We conclude the paper pointing out some open problems in Section \ref{sec:open_problems}.

\section{Notation and main definitions}\label{sec:preliminary}

\subsection{Anisotropy}

An anisotropy in $\R^2$ is a positively one-homogeneous convex function $\norm:\R^2\to[0,+\infty)$ satisfying 
\begin{equation}\label{norm_bounds_12120}
c_\norm|x|\le \norm(x)\le C_\norm|x|,\quad x\in\R^2,
\end{equation}
for some $0<c_\norm\le C_\norm<+\infty.$ 
The closed convex set $W^\norm:=\{\norm\le 1\}$ is called the unit $\norm$-ball or sometimes 
the Wulff shape (of $\norm$). Similarly, the set $\{\norm(\cdot-y)\le r\}$ is called the Wulff shape of radius $r$ centered at
$y.$ The anisotropy
$$
\norm^o(x):=\max_{y\in W^\norm} \scalarp{x}{y},\quad x\in\R^2,
$$ 
is called the dual of $\norm$, where $\scalarp{\cdot}{\cdot}$ is the 
Euclidean scalar product in $\R^2.$ When $W^\norm$ is a polygon, we say $\norm$ is crystalline and the boundary segments of $W^\norm$ will be called facets. Note that $\norm$ is  crystalline if and only if so is $\norm^o.$ In what follows we shorthand $\nu^\norm:= \frac{\nu}{\norm^o(\nu)}$ for $\nu\ne0.$

\begin{wrapfigure}[13]{l}{0.35\textwidth}
\vspace*{-3mm}
\includegraphics[width=0.32\textwidth]{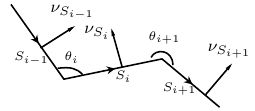}
\caption{\small The angles of a polygonal line. Here: $\theta_i:=\angle(S_{i-1},S_i)\in(0,\pi)$ as $S_{i-1}$ and $S_i$ form a convex cone (whose inner normals are $\nu_{S_{i-1}}$ and $\nu_{S_{i}}$), while
$\theta_{i+1}:=\angle(S_i,S_{i+1})\in(\pi,2\pi)$ since the cone formed by $S_{i}$ and $S_{i+1}$ (whose inner normals are $\nu_{S_i}$ and $\nu_{S_{i+1}}$) is concave.}
\label{fig:angles}
\end{wrapfigure}

\subsection{Curves}
A curve $\Gamma$ in $\R^2$ is the image of a Lipschitz 
continuous function $\gamma:I\to\R^2,$ where $I$ is one of $[0,1],$ $[0,1)$ or $(0,1)$, 
depending on whether $\Gamma$ is bounded,  bounded by one end,
or  having two unbounded ends. The function $\gamma$ is called a \emph{parametrization} of $\Gamma.$ When  $\gamma(0) = \gamma(1),$ we say $\Gamma$ is \emph{closed}. Our
curves may have self-intersections.  If $\gamma$ is $C^1$ (resp. Lipschitz) and $|\gamma'| > 0$ in $[0,1]$ (resp. a.e. in $[0,1]$), it is called a \emph{regular parametrization} of $\Gamma.$ A curve
$\Gamma$ is $C^{k+\alpha}$ for some $k\ge0$ and  $\alpha\in[0,1]$, $k+\alpha \geq 1$, if it admits a regular $C^{k+\alpha}$-parametrization. The tangent line 
to $\Gamma$ at a point $p\in \Gamma$ is denoted $T_p\Gamma$ (provided it exists).
The (Euclidean) unit tangent vector to $\Gamma$ at $p$ is denoted $\tau_\Gamma(p)$ and the unit normal vector is $\nu_\Gamma(p) = \tau_\Gamma(p)^\perp,$ where ${}^\perp$ is the counterclockwise $90^o$ rotation.
When there is no risk of confusion, we simply write $\tau$ and 
$\nu$ in place of $\tau_\Gamma$ and $\nu_\Gamma$. If $p= \gamma(x)$ and $\gamma$ is differentiable at $x$, then 
$$
\hspace*{5cm}
\tau(p) = \frac{\gamma'(x)}{|\gamma'(x)|}
\qquad\text{and}\qquad 
\nu(p) = \frac{\gamma'(x)^\perp}{|\gamma'(x)|}.
$$
Unless otherwise stated, we choose tangent vectors in the direction of the parametrization  and closed curves are oriented in the clockwise order,
 so that,
when $\Gamma$ is bounded and embedded, the unit  normal 
points outside the bounded region enclosed by $\Gamma.$ 
The same convention is taken for the orientation of
the  boundary of the Wulff shape $W^\norm.$

A closed curve $\Gamma$ is \emph{polygonal} if it is a finite union of segments. We frequently represent  $\Gamma$ as a union $\cup_{i=1}^N S_i$ of its segments, counted in the 
order. 
Sometimes we consider triplets $S_{i-1},$ $S_i,$ $S_{i+1}$ of consecutive segments with the conditions that $S_0:=S_N$ and $S_{N+1}:=S_1.$ Similar notation 
is used for all quantities involving indexation over $i=1,\ldots,N.$
The angle $\angle(S_{i-1},S_{i})$ between the segments $S_{i-1}$ and $S_{i}$ is denoted by $\theta_i;$  for convenience in computations, we choose $\theta_i\in(0,\pi)$ if $S_{i-1}$ and $S_{i}$ form a convex cone for which $\nu_{S_{i-1}}$ and  $\nu_{S_{i}}$ are interior, otherwise we choose $\theta_i\in(\pi,2\pi),$ see e.g. Fig. \ref{fig:angles}.

We say an unbounded curve $\Gamma$ is 
\emph{polygonal} provided that there exists $r_0>0$ such that for any $r>r_0,$ $B_r(0)\cap \Gamma$ is a polygonal curve and $\Gamma\setminus B_r$  is a union of two half-lines. If $\Gamma=\cup_{i=1}^nS_i,$ the angles $\theta_i:=\angle (S_{i-1},S_{i})$ for $2\le i\le n$ are defined as in the closed case with the convention $\theta_1=\theta_{n+1}=0.$ 

A curve $\Gamma$ is \emph{rectifiable} if $\sH^1(\Gamma)<+\infty.$ 
By definition, any polygonal curve is (locally) rectifiable. By \cite[Lemmas 3.2, 3.5]{Falconer:1985} any rectifiable curve $\Gamma$ admits a unit tangent vector
$\tau$ (and a corresponding unit normal $\nu$) $\sH^1$-a.e. defined.

For shortness, let us call a polygonal curve $\Gamma$ \emph{convex} provided that all its angles either belong to $(0,\pi)$ or to $(\pi,2\pi)$ simultaneously (indeed, in the latter, we can always reorient the curve to reduce to $(0,\pi)$). 

\subsection{Admissible polygonal curves}\label{subsec:admissible_curve}

Let $\norm$ be a crystalline anisotropy. A polygonal curve $\Gamma$ is called  \emph{$\norm$-admissible} (admissible for short) if it admits a parametrization such  that for every segments $S'$ 
and $S''$ of $\Gamma$ having a common vertex there are facets $F'$ and $F''$ of $W^\norm$ having a common vertex such that $\nu_{S'} =\nu_{F'}$ and $\nu_{S''}=\nu_{F''}.$
\begin{figure}[htp!]
\centering 
\includegraphics[width=0.8\textwidth]{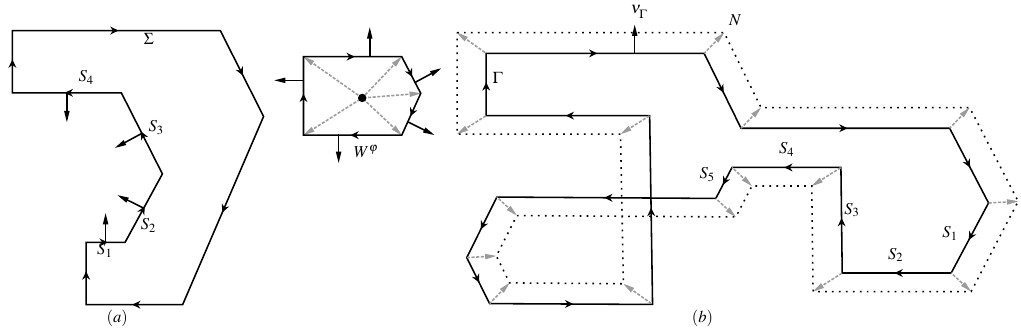}
\caption{\small Non admissible curve $\Sigma$ (left) 
and admissible curve $\Gamma$ with a CH field (right) with
respect to the pentagonal anisotropy $W^\norm$ (in the center). Note that in (a) for the pairs $(S_1,S_2),$ $(S_2,S_3)$ and $(S_3,S_4)$ there are no pairs 
of consecutive facets of $W^\norm$ having the same normals (also because there is no facet of $W^\norm$ with the  unit normal $\nu_{S_2}$ or $\nu_{S_3}$). The curve $\Gamma$ in (b) is locally convex near $S_1$ and $S_2,$
locally concave around $S_4$ and neither concave nor convex near $S_3$ and $S_5.$}\label{fig:phi_regular}
\end{figure}

Note that every (closed or unbounded) admissible polygonal curve $\Gamma$ is Lipschitz $\norm$-regular. Indeed, in this case, the CH field is uniquely defined at the  vertices of $\Gamma$ as the corresponding vertices of
$W^{\norm}$ and then, for instance, interpolated linearly along the segments, see Fig. \ref{fig:phi_regular}.

\subsection{Tangential divergence}

The \emph{tangential divergence} of a vector field $g\in C^1(\R^2; \R^2)$ over a  Lipschitz curve $\Gamma$ is defined as
$$
\div_\tau g(p) = \scalarp{ \nabla g(p)\tau(p) }{ \tau(p)} 
\quad \text{for $\sH^1$-a.e. $p\in\Gamma$}.
$$
The tangential divergence can also be introduced using parametrizations. More precisely, let $\gamma:[0,1]\to \R^2$ be a regular Lipschitz parametrization
of $\Gamma$ and $g:\Gamma\to \R^2$ a Lipschitz vector field along
$\Gamma$, i.e., $g\circ \gamma\in \Lip([0,1]; \R^2)$. Then
\begin{equation*}
\div_\tau\, g(p) = \frac{ \scalarp{[g\circ \gamma]'(x) }{ \gamma'(x)} 
}{|\gamma'(x)|^2},\qquad p =\gamma(x)
\end{equation*}
at points of differentiability. One checks that the tangential divergence is independent of the parametrization.

\subsection{Cahn-Hoffman vector fields}

Let $\Gamma$ be a rectifiable curve, with 
$\sH^1$-almost everywhere defined unit normal $\nu.$ A vector field
$N:\Gamma\to \p W$ is called a \emph{Cahn-Hoffman field} (CH field) if
\begin{equation*}%\label{atirgulim}
\scalarp{N }{\nu} = \norm^o(\nu)\,\, \quad 
\text{$\sH^1$-a.e. on $\Gamma,$}
\end{equation*}
namely $N(x)\in \p\norm^o(\nu(x))$ for $\sH^1$-a.e. $x \in \Gamma,$ where $\p$ stands for the subdifferential. 
Notice that reversing the orientation of the curve translates into a change of sign of $\nu$ and of the corresponding CH field, which is always ``co-directed''
as $\nu$. We indicate the collection of all Cahn-Hoffman vector vields along $\Gamma$ by $CH(\Gamma).$

\begin{definition}[\textbf{Lipschitz $\norm$-regular curve}]
We say the curve $\Gamma$ is \emph{Lipschitz $\norm$-regular} ($\norm$-\emph{regular}, for short) if it admits a \emph{Lipschitz} CH field.
\end{definition}

\subsection{Index of a $\norm$-regular curve} 

Classically, the index (an integer number) of a closed smooth planar curve is 
the sum of the positive (counterclockwise)
and negative (clockwise)
full turns 
on $\S^1$ made  by the normal vector field to the curve. The same
definition can be given in our crystalline context,
provided that the normal vector field is replaced by 
a Cahn-Hoffman vector field, and $\S^1$ by the Wulff shape:

\begin{definition}[Index]\label{def:curve_index}
Let $\Gamma$ be a $\norm$-regular curve and $N^0\in CH(\Gamma).$ We define the index of $\Gamma$ as the signed number of complete turns of $N^0$ over $\p W^\norm.$
\end{definition}

One checks that the index is independent of  $N^0$ and is well-defined also for unbounded curves. 
Moreover, if $\Gamma$ has the reversed orientation, then the 
index changes sign.

\subsection{Crystalline curvature}%\label{subsec:crys_curva}
Given a crystalline anisotropy $\norm$ and a $\norm$-regular curve $\Gamma,$ one can readily check that the minimum problem
$$
\inf_{N\in CH(\Gamma)} \int_\Gamma \norm^o(\nu_\Gamma) (\div_\tau N)^2d\sH^1
$$
admits a solution. Clearly, even though minimizers are not unique, their tangential divergence is always the same. For a minimizer $N^0\in CH(\Gamma),$ 
at every point $p\in\Gamma$ where $N^0$ is differentiable, the number 
$$
\kappa_\Gamma^\norm(p):= \div_\tau N^0(p)
$$
is called the $\norm$-curvature of $\Gamma$ at $p.$
When $\Gamma$ is an admissible polygonal curve, the minimizer $N^0$ is uniquely defined: on a vertex it coincides with the corresponding vertex of $W^\norm$ and then it is linearly interpolated along segments and constantly 
extended along 
half-lines. In particular, $\kappa_\Gamma^\norm$ is a constant $\kappa_{S_i}^\norm$  on each segment/half-line $S_i$ of $\Gamma.$ One 
checks
that
\begin{equation}\label{curva_definiton}
\kappa_\Gamma^\norm = \frac{c_i\sH^1(F_i)}{\sH^1(S_i)}\quad\text{on $S_i,$}
\end{equation}
where $F_i$ is the facet of $W^\norm$ with $\nu_{F_i}=\nu_{S_i}$ and the sometimes called \emph{transition number} $c_i$
is equal to $+1$ if $\Gamma$ is locally ``convex'' around $S_i$ (see Fig. \ref{fig:sdists} (c)), to $-1$ if $\Gamma$ is locally ``concave'' around $S_i$ (see Fig. \ref{fig:sdists} (a)) and to $0$ if $\Gamma$ is neither
concave, nor convex near $S_i$ (see Fig. \ref{fig:sdists} (b)) or if $S_i$ is a half-line.

\subsection{Parallel polygonal curves and signed height}

In this section we introduce the notion of parallel poly\-gonal curves,   distance vectors and signed heights.
With respect to the usual case, some care is necessary since we are considering curves with possible self-intersections.

\begin{definition}[\textbf{Parallel curves}] 
Let $\Gamma:=\cup_{i=1}^N S_i$ be a polygonal curve consisting of a consecutive union of $N\geq 1$ segments/half-lines $S_1,\ldots,S_N.$ A polygonal 
curve $\bar \Gamma$ is called \emph{parallel} to $\Gamma$ provided that:

\begin{itemize}
\item $\bar \Gamma:=\cup_{i=1}^N \bar S_i$ is a consecutive union of $N$ (nondegenerate, i.e. of positive length) segments/half-lines $\bar S_1,\ldots,\bar S_N,$
 
\item each $S_i$ is parallel to $\bar S_i$ with the same orientation (so that $\nu_{S_i} = \nu_{\bar S_i}$),

\item if $S_i$ is a half-line, then so is $\bar S_i$ with bounded $S_i\Delta \bar S_i.$
\end{itemize}
\end{definition}

Clearly, the corresponding angles of parallel curves are equal, i.e., $\angle(S_i,S_{i+1})=\angle(\bar S_i, \bar S_{i+1}).$
The orientation of segments of curves in the parallelness is  important, see Fig. \ref{fig:nonparallel}.

\begin{figure}[htp!]
\includegraphics[width=0.8\textwidth]{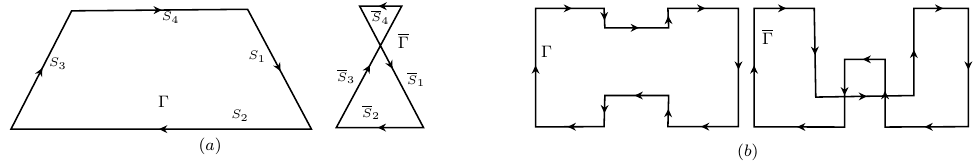} 
\caption{\small Nonparallel and parallel curves. Note that in (a) the segments $S_4$ and $\bar S_4$ have different orientations. }\label{fig:nonparallel}
\end{figure}

Note that if $\norm$ is crystalline and $\Gamma$ is an admissible polygonal curve, then every polygonal curve $\bar \Gamma$ parallel to $\Gamma$ is also admissible. Moreover,
$$
\text{if}\quad \kappa_\Gamma^\norm = \frac{c_i\sH^1(F_i)}{\sH^1(S_i)}\quad \text{on $S_i,$}  \quad\text{then}\quad
\kappa_{\bar \Gamma}^\norm = \frac{c_i\sH^1(F_i)}{\sH^1(\bar S_i)}
\quad \text{on $\bar S_i.$}
$$
So, the transition numbers of segments do not change in parallel networks.

\begin{definition}[\textbf{Distance vectors}] 
Let $S$ and $T$ be two parallel segments and let $\ell_S$ and $\ell_T$ be the straight lines passing through $S$ and $T.$ A vector $H(S,T)\in\R^2,$ orthogonal to both $\ell_S$ and $\ell_T$ and satisfying $\ell_T = \ell_S +
H(S,T),$ is called a \emph{distance vector} from $S$ to $T.$
When $S$ is oriented by $\nu_S,$ in what follows  we frequently refer to the number 
$$
h:=\scalarp{H(S,T)}{ \nu_{S} }
$$
as the (Euclidean) \emph{signed height} from $S$ to $T.$ Note that $H(S,T) = h\nu_S$.
\end{definition}

Let us obtain the length of the segments of parallel curve by means of  the length of the corresponding segments and signed heights.
\begin{figure}[htp!]
\includegraphics[width=0.95\textwidth]{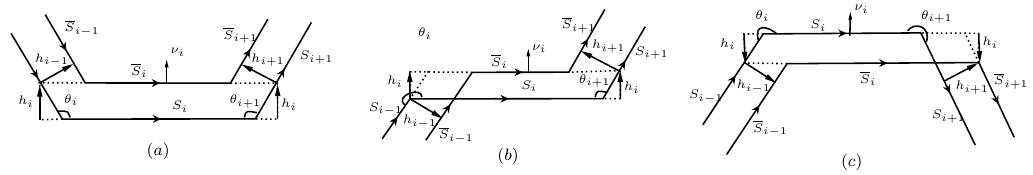}
\caption{\small } \label{fig:sdists}
\end{figure}
Let $\Gamma:=\cup_{i=1}^NS_i$ be a closed polygonal curve and let $\bar\Gamma:=\cup_{i=1}^N\bar S_i$ be a closed polygonal curve, parallel to $\Gamma.$ For any $i\in\{1,\ldots,N\},$ set $h_i:=\scalarp{H(S_i,\bar
S_i)}{\nu_{S_i}}$ and let $\theta_i:=\angle(S_{i-1},S_{i})\in(0,2\pi)\setminus\{\pi\}$ be the angle between $S_i$ and $S_{i+1},$ see Fig. \ref{fig:sdists}. Then elementary geometric arguments show that
\begin{equation}\label{length_changed_hhh}
\sH^ 1(\bar S_i) = \sH^1(S_i) - 
\Big(\frac{h_{i-1}}{\sin\theta_i} + h_i[\cot\theta_i + \cot\theta_{i+1}] +
\frac{h_{i+1}}{\sin\theta_{i+1}}\Big),
\end{equation}
see also \cite[Sec. 3]{GP:2022}.

\subsection{Construction of a parallel curve from the given heights}

Given parallel polygonal curves 
$\Gamma=\cup_{i=1}^nS_i$ and $\bar \Gamma:=\cup_{i=1}^n \bar S_i,$
the $n$-tuple $\{h_1,\ldots,h_n\}$ of the signed heights 
$h_i:=\scalarp{H(S_i,\bar S_i)}{\nu_{S_i}}$ is uniquely defined. Let
us check whether the converse assertion is also true.

\begin{lemma}\label{lem:construc_paral_curve}
Let $\Gamma=\cup_{i=1}^nS_i$ be a polygonal curve and let $\{\theta_i\}_{i=1}^n$ be the corresponding angles of $\Gamma$ (as mentioned earlier, with the assumptions $\theta_1=\theta_{n+1}=0$ if $\Gamma$ is unbounded). Let an $n$-tuple $\{h_1,\ldots,h_n\}$ of real  numbers be such that $h_i=0$ if $S_i$ is a half-line and
\begin{equation}\label{paspaslf}
\max_{1\le i\le n} |h_i| < \min_{1\le i\le n} 
\frac{\sH^1(S_i)}{\frac{1}{|\sin \theta_i|} + |\cot\theta_i +
\cot\theta_{i+1}| + \frac{1}{|\sin \theta_{i+1}|}}.
\end{equation}
Then there exists a unique polygonal curve $\bar\Gamma=\cup_{i=1}^n \bar S_i$ parallel to $\Gamma$ with $\scalarp{H(S_i,\bar S_i)}{\nu_{S_i}} = h_i.$
\end{lemma}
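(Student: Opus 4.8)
The idea is to build $\bar\Gamma$ one segment at a time, translating each $S_i$ along $\nu_{S_i}$ by $h_i$ to obtain the supporting line $\ell_{\bar S_i} := \ell_{S_i} + h_i\nu_{S_i}$, and then intersecting consecutive supporting lines to locate the vertices of $\bar\Gamma$. Since the angles $\theta_i \neq \pi$, the lines $\ell_{\bar S_i}$ and $\ell_{\bar S_{i+1}}$ are not parallel, so they meet in exactly one point $\bar V_i$; this already forces uniqueness of $\bar\Gamma$ once it exists, since the supporting lines are determined by the $h_i$ and the vertices are then determined as their intersections. The only thing that can go wrong is that a resulting segment $\bar S_i$ (between $\bar V_{i-1}$ and $\bar V_i$) degenerates to a point, reverses orientation, or — in the half-line case — fails to differ from $S_i$ by a bounded set. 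The content of the smallness condition \eqref{paspaslf} is precisely to rule these out.

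First I would set up coordinates/notation: fix the vertex $V_i = S_i \cap S_{i+1}$ of $\Gamma$ and compute, by elementary trigonometry in the triangle cut off near $V_i$, the displacement of the new vertex $\bar V_i$ along the line $\ell_{\bar S_i}$ relative to $V_i$. Translating $\ell_{S_i}$ by $h_i\nu_{S_i}$ and $\ell_{S_{i+1}}$ by $h_{i+1}\nu_{S_{i+1}}$ moves the intersection point by an amount that is a linear combination of $h_i$ and $h_{i+1}$ with coefficients involving $1/\sin\theta_{i+1}$ and $\cot\theta_{i+1}$ — exactly the quantities appearing in \eqref{length_changed_hhh}. Carrying this out on both ends of $S_i$ gives the signed length of $\bar S_i$ as
\[
\sH^1(\bar S_i) = \sH^1(S_i) - \Big(\tfrac{h_{i-1}}{\sin\theta_i} + h_i[\cot\theta_i + \cot\theta_{i+1}] + \tfrac{h_{i+1}}{\sin\theta_{i+1}}\Big),
\]
i.e. formula \eqref{length_changed_hhh} read backwards as a definition. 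Then condition \eqref{paspaslf} gives, via the triangle inequality, $\sH^1(\bar S_i) > 0$ for every $i$: the subtracted term is bounded in absolute value by $\max_j|h_j|$ times the denominator in \eqref{paspaslf}, which is strictly less than $\sH^1(S_i)$. Positivity of the length simultaneously shows the segment is nondegenerate and that $\bar V_{i-1}, \bar V_i$ lie in the correct cyclic order along $\ell_{\bar S_i}$, so $\nu_{\bar S_i} = \nu_{S_i}$ with matching orientation; the angles are automatically preserved because the supporting lines are parallel to the original ones. For a half-line $S_i$ we have $h_i = 0$, so $\ell_{\bar S_i} = \ell_{S_i}$ and the only adjustment is at the finite endpoint, which moves by a bounded amount, giving $S_i \triangle \bar S_i$ bounded; in the closed case one checks that going around once the construction closes up consistently, since each $\bar V_i$ is defined intrinsically as an intersection of two lines.

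The main obstacle is bookkeeping the orientation/order statement rather than any deep difficulty: one must verify that "the intersection point of the translated lines, moved by a controlled amount, stays on the correct side" is equivalent to the positivity $\sH^1(\bar S_i) > 0$, and that this is what \eqref{paspaslf} buys. Once the displacement formula for $\bar V_i$ is in hand, everything else is a direct estimate. I would also remark that $\theta_i \in (0,2\pi)\setminus\{\pi\}$ is used in two places — to guarantee $\sin\theta_i \neq 0$ so the construction makes sense, and so that the denominators in \eqref{paspaslf} are finite — and that the half-line convention $\theta_1 = \theta_{n+1} = 0$ together with $h_1 = h_n = 0$ (when those segments are half-lines) makes the boundary terms in the displacement formula vanish, so no separate argument is needed at the unbounded ends beyond the bounded-symmetric-difference check.
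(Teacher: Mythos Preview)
Your proposal is correct and follows the same construction as the paper: translate each supporting line by $h_i\nu_{S_i}$, intersect consecutive lines to get the vertices, and use condition \eqref{paspaslf} together with \eqref{length_changed_hhh} to ensure no segment degenerates. The only methodological difference is in how parallelness (i.e., preservation of orientation) is verified: the paper runs a short homotopy argument, considering the curves $\bar\Gamma(t)$ built from $\{th_1,\ldots,th_n\}$ for $t\in[0,1]$ and arguing by continuity that the first failure time cannot exist because the length formula stays positive throughout; you instead argue directly that the quantity produced by \eqref{length_changed_hhh} is the \emph{signed} length along $\tau_{S_i}$, so its positivity already forces $\bar V_{i-1},\bar V_i$ to sit in the correct order. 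Your route is slightly more economical but requires you to actually carry out the displacement computation with signs (rather than cite \eqref{length_changed_hhh}, which in the paper is derived \emph{assuming} parallelness), whereas the paper's continuity argument sidesteps the sign bookkeeping at the cost of an extra paragraph.
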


Notice that \eqref{paspaslf} is not sufficient for  ensuring the embeddedness of $\bar\Gamma$ even if $\Gamma$ is embedded.

\begin{proof}
First assume $\Gamma$ is closed. For any $i\in\{1,\ldots,n\}$ 
let $\ell_i$ be the unique straight line parallel to $S_i$ such that $\scalarp{H(S_i,\ell_i)}{\nu_{S_i}} = h_i.$
As $S_i\cap S_{i+1}\ne\emptyset,$ the lines $\ell_i$ and $\ell_{i+1}$
intersect at a unique point $G_i.$ Consider the closed curve $\bar\Gamma:=\overline{G_1G_2\ldots G_N G_1},$ obtained by connecting these intersection points with  segments $\bar S_i:=[G_{i-1}G_i]$ consecutively. Clearly, $\bar\Gamma$ is uniquely defined and, by construction,
$\scalarp{H(S_i,\bar S_i)}{\nu_{S_i}} = h_i$ for any $i\in\{1,\ldots,n\}.$

We claim that $\bar\Gamma$ is parallel to $\Gamma.$ 
Indeed, for any $t\in[0,1]$ let $\bar \Gamma(t)$ be the polygonal curve associated to the $n$-tuple $\{th_1,\ldots,th_n\},$
constructed as above. Since $n$ is finite and $t\mapsto \Gamma(t)$ is Kuratowski continuous,  $\bar\Gamma(t)$ is parallel to $\Gamma$ for small $t>0.$ Let $\bar
t\in(0,1]$ be the first time for which $\bar\Gamma(\bar t)$ is 
not parallel to $\Gamma.$ Since each segment is continuously translating along its normal direction,  the only way to fail the 
parallelness is that some segment
of $\bar\Gamma(\bar t)$ becomes degenerate (i.e., has $0$-length). 
However, by \eqref{length_changed_hhh} and \eqref{paspaslf}, for any $t\in(0,\bar t)$ and $i\in\{1,\ldots,n\}$  we have
\begin{align*}
\sH^1(S_i(t)) = & \sH^1(S_i) - \Big[ \frac{th_{i-1}}{\sin\theta_i} +
th_i[\cot\theta_i + \cot\theta_{i+1}] +
\frac{th_{i+1}}{\sin\theta_{i+1}}\Big] \\
\ge & \sH^1(S_i) - \Big[\frac{1}{|\sin\theta_i|} +
|\cot\theta_i + \cot\theta_{i+1}| +
\frac{1}{\sin\theta_{i+1}}\Big]\max_i|h_i|>0.
\end{align*}
Thus, by the Kuratowski continuity of $S_i(t),$ letting 
$t\nearrow \bar t$ we conclude $\sH^1(S_i(\bar t))>0$ for all $i\in\{1,\ldots,n\},$ a contradiction. This contradiction shows that such $\bar t\in(0,1]$ does not exist and $\bar \Gamma = \bar\Gamma(1)$ is parallel to $\Gamma.$

In case $\Gamma$ is unbounded, i.e., $S_1$ and $S_n$ are half-lines, as above we define the polygonal curve $\overline{G_1G_2\ldots G_n}$ and then append two half-lines $\bar S_1$ and $\bar S_n$ respectively at $G_1$ and $G_n$ with the property that both $\bar S_1\Delta S_1$ and $\bar S_n\Delta S_n$ are bounded.
\end{proof}

\section{Anisotropic elastic functional}\label{sec:willmore_funca}

Let $\norm$ be an anisotropy. Given a $\norm$-regular curve $\Gamma,$ $\alpha>0$ and open set $D\subseteq\R^2,$
we define the \emph{anisotropic elastic functional}
$$
\cF_\alpha(\Gamma,D):=\int_{D\cap \Gamma} \left(1+\alpha \left(
\kappa_\Gamma^\norm\right)^2\right)\norm^o(\nu_\Gamma)\,d\sH^1,
$$
where for simplicity we suppress the dependence of $\cF_\alpha$ on $\norm.$ We write $\cF_\alpha:=\cF_\alpha(\cdot,\R^2).$ Note
that in general, $\cF_\alpha(\Gamma,D)$ may be infinite.

\begin{wrapfigure}[9]{l}{0.2\textwidth}%[htp!]
\vspace*{-3mm}
\includegraphics[width=0.2\textwidth]{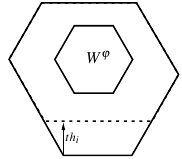}
\caption{\small }\label{fig:first_var}
\end{wrapfigure}
We are interested in the case of a crystalline $\norm$ and an  admissible $\Gamma,$ in which case 
$\cF_\alpha(\Gamma,D)<+\infty$ for any bounded open set $D\subset\R^2.$ Let us compute the first variation of $\cF_\alpha$  in the simplest case where only one segment $S_i$ is translated while keeping others untranslated, but
shortened or elongated when necessary (see Fig. \ref{fig:first_var}). Given a signed height $h_i$ and a real number $t\in\R$ with $|t|<<1,$ let
$\Gamma(t)$ be the polygonal curve, parallel to $\Gamma$ such that $\scalarp{H(S_j,S_j(t))}{\nu_{S_j}}$ is $0$
for $j\ne i$ and is $th_i$ for $j=i.$ Then for any large disc $D$ compactly containing $S_i$ and $\cup_t S_i(t),$
\begin{multline*}
\hspace*{3.4cm}
\cF_\alpha(\Gamma(t),D) - \cF_\alpha (\Gamma,D) = \sum_{j=i-1,i,i+1} \norm^o(\nu_{S_j}) \Big[ \sH^1(D\cap S_j(t))
- \sH^1(D\cap S_j)\Big] \\
+ \alpha\sum_{j=i-1,i,i+1} \norm^o(\nu_{S_j}) c_j^2 \sH^1(F_j)^2 \Big[ \frac{1}{\sH^1(D\cap S_j(t)) } -
\frac{1}{\sH^1(D\cap S_j) }\Big],
\end{multline*}
where $c_i$ are transition numbers, $F_i$ is the unique facet of $W^\norm$ with $\nu_{S_i} = \nu_{F_i}$ and
$\theta_i:=\angle(S_i,S_{i+1})$ are the angles of $\Gamma.$
As $h_j=0$ for $j\ne i,$ by the choice of $D$ and \eqref{length_changed_hhh}, using $h_{i-1}=h_{i+1}=0,$ 
\begin{gather*}
\sH^1(D\cap S_{i-1}(t)) = \sH^1(D\cap S_{i-1}) - \frac{th_i}{\sin \theta_i}, \\
\sH^1(S_i(t)) = \sH^1(S_i) - t h_i(\cot \theta_i+\cot \theta_{i+1}), \\
\sH^1(D\cap S_{i+1}(t)) = \sH^1(D\cap S_{i+1}) - \frac{th_i}{ \sin\theta_{i+1} }.
\end{gather*}
Thus, 
\begin{multline*} 
\frac{d}{dt}\cF_\alpha(\Gamma(t),D) \Big|_{t=0} = -h_i \Big(\tfrac{\norm^o(\nu_{S_{i-1}}) }{\sin\theta_i } 
+ \norm^o(\nu_{S_i})[\cot \theta_i+\cot \theta_{i+1}] 
+ \tfrac{\norm^o(\nu_{S_{i+1}}) }{\sin\theta_{i+1} }\Big) \\
+\alpha h_i \Big(\tfrac{c_{i-1}^2\sH^1(F_{i-1})^2 \norm^o(\nu_{S_{i-1}})}{\sH^1(D\cap S_{i-1})^2\sin\theta_i}
+ \tfrac{c_{i}^2\sH^1(F_{i})^2\norm^o(\nu_{S_{i}}) [\cot \theta_i+\cot \theta_{i+1}]}{\sH^1(S_{i})^2}
+\tfrac{c_{i+1}^2\sH^1(F_{i+1})^2\norm^o(\nu_{S_{i+1}})}{\sH^1(D\cap S_{i+1})^2\sin\theta_{i+1}}\Big),
\end{multline*}
or equivalently
\begin{multline*}%\label{showkeadfg}
\frac{d}{dt}\cF_\alpha(\Gamma(t),D) \Big|_{t=0} = -\int_{S_i} \tfrac{h_i}{\sH^ 1(S_i)}
\Big(\tfrac{\norm^o(\nu_{S_{i-1}}) }{ \sin\theta_i } + \norm^o(\nu_{S_i})[\cot \theta_i+\cot \theta_{i+1}]+
\tfrac{\norm^o(\nu_{S_{i+1}}) }{ \sin\theta_{i+1} }\Big)\,d\sH^ 1 \\[1mm]
+ \alpha \int_{S_i} \tfrac{h_i}{\sH^1(S_i)} \Big(\tfrac{c_{i-1}^2\sH^1(F_{i-1})^2\norm^o(\nu_{S_{i-1}})
}{\sH^1(D\cap S_{i-1})^2\sin\theta_i} + \tfrac{c_{i}^2\sH^1(F_{i})^2\norm^o(\nu_{S_{i}})[\cot \theta_i+\cot
\theta_{i+1}]}{\sH^1(S_{i})^2}+\tfrac{c_{i+1}^2\sH^1(F_{i+1})^2\norm^o(\nu_{S_{i+1}})}{\sH^1(D\cap
S_{i+1})^2\sin\theta_{i+1}}\Big)\,d\sH^ 1.
\end{multline*}

Recall that $\Gamma$ is admissible and hence, there are unique facets $F_{i-1},$ $F_i$ and $F_{i+1}$ of $W^\norm$ such that
$\nu_{F_j}=\nu_{S_j}$ for $j=i-1,i,i+1.$ Then direct geometric computations show that if $S_{i-1},S_i,S_{i+1}$ are
as in  Fig. \ref{fig:betayin}, then
\begin{equation}\label{ashuc7bnn}
\frac{\norm^o(\nu_{F_{i-1}}) }{ \sin\theta_i } + \norm^o(\nu_{F_i})[\cot \theta_i+\cot \theta_{i+1}]+
\frac{\norm^o(\nu_{F_{i+1}}) }{ \sin\theta_{i+1} } =
-c_i \sH^1(F_i) =
\begin{cases}
-\sH^1(F_i) &  \text{in case (a)}  \\
0 &  \text{in case  (b)} \\
\sH^1(F_i) &  \text{in case (c).}  
\end{cases}
\end{equation}
\begin{figure}[htp!]
\includegraphics[width=0.9\textwidth]{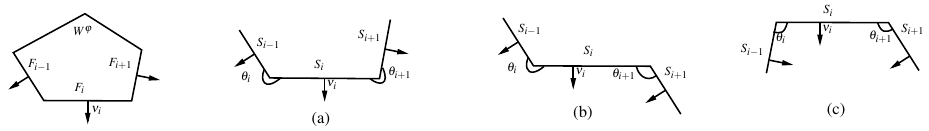}
\caption{\small The three cases in \eqref{ashuc7bnn}.}\label{fig:betayin}
\end{figure}
Indeed, even though this identity is well-known in the crystalline literature \cite{Taylor:1992}, for completeness  we provide a short explanation.  Suppose that we are as in Fig. \ref{fig:betayin} (b). Then necessarily $\nu_{F_{i-1}} = \nu_{F_{i+1}},$ and thus  $S_{i-1}$ and $S_{i+1}$ are parallel and $\theta_i+\theta_{i+1}=2\pi.$ Thus, the sum on the left-hand side of \eqref{ashuc7bnn} is $0.$ 
Next, suppose that we are as in Fig. \ref{fig:betayin} (a).

\begin{wrapfigure}{l}{0.45\textwidth}
\vspace*{-4mm}
\includegraphics[width=0.44\textwidth]{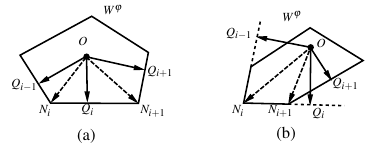}
\caption{\small Two possible Wulff shapes.}\label{fig:mutad}
\end{wrapfigure}
Let $O$ denote the center of $W^\norm,$ $Q_{i-1},Q_i,Q_{i+1}$ be the bases of the heights from $Q$ to facets $F_{i-1},F_i,F_{i+1},$ respectively, and let  $N_i$ and $N_{i+1}$ denote the endpoints of $F_i,$ see Fig. \ref{fig:mutad}. Note that $Q_j$ need not lie on $F_j.$
Let $\beta_{i-1}:=\angle Q_{i-1}ON_i$ and $\beta_i:=\angle N_iOQ_i.$
Clearly, $|OQ_{j}|=|ON_i|\cos\beta_j$ and $\scalarp{\vec{OQ_j}}{\vec{ON_i}} = |OQ_j|\cdot |ON_i|\cos\beta_j$ for $j=i-1,i, $ where $\vec{PQ}$ and $|PQ|$ are the vector $PQ$ (directed from $P$) and length of the segment $PQ.$ On the other hand, 
$$
\hspace*{7cm}
\nu_j = \tfrac{\vec{OQ_j}}{|OQ_j|},\quad \scalarp{\tfrac{\vec{ON_i}}{\norm(\vec{ON_i})} }{\nu_j} = \norm^o(\nu_j),\quad j=i-1,i,
$$
and thus, recalling $\norm(\vec{ON_i})=1,$ we get 
$$
|ON_i|\cos\beta_j = 
\tfrac{|ON_i|\cos\beta_j}{\norm(\vec{ON_i})} = \norm^o(\nu_j),\quad j=1,2.
$$
Now, observing $\angle Q_{i-1}N_iQ_i = 2\pi-\theta_i,$ so that $\beta_{i-1} + \beta_i = \theta_i-\pi,$ we have   
$$
|ON_i|\cos\beta_{i-1} = |ON_i| \cos (\theta_i-\pi-\beta_i) = - |ON_i| \cos \theta_i\cos \beta_i -  |ON_i| \sin  \theta_i\sin \beta_i. 
$$
Therefore, 
$$
|N_iQ_i| = |ON_i| \sin\beta_i = - \frac{\norm^o(\nu_{i-1})}{\sin\theta_i} - \norm^o(\nu_{i}) \cot\theta_i.
$$
Analogously, 
$$
|Q_iN_{i+1}| = 
\begin{cases}
- \frac{\norm^o(\nu_{i+1})}{\sin\theta_{i+1}} - \norm^o(\nu_{i}) \cot\theta_{i+1} & \text{in case Fig. \ref{fig:mutad} (a)},\\[2mm]
\frac{\norm^o(\nu_{i+1})}{\sin\theta_{i+1}} + \norm^o(\nu_{i}) \cot\theta_{i+1} & \text{in case Fig. \ref{fig:mutad} (b)}.
\end{cases}
$$
Now using $\sH^1(F_i) = |N_iN_{i+1}|,$ we deduce \eqref{ashuc7bnn}. 

The case of Fig. \ref{fig:betayin} (c) is analogous.

\begin{lemma}[First variation formula]%\label{lem:first_variation_formula}
Let $\norm$ be a crystalline anisotropy and $\Gamma$ be an admissible polygonal curve. Then for any segment $S_i$ of
$\Gamma$ and a real number $h_i\in\R$ one has
\begin{multline*}
\frac{d}{dt}\cF_\alpha(\Gamma(t),D) \Big|_{t=0} = \int_{S_i} \frac{c_i h_i \sH^1(F_i)}{\sH^ 1(S_i)}\,d\sH^ 1
\\[1mm]
+ \frac{\alpha h_i}{\sH^1(S_i)} \int_{S_i} \Big(\frac{c_{i-1}^2\sH^1(F_{i-1})^2\norm^o(\nu_{S_{i-1}})
}{\sH^1(D\cap S_{i-1})^2\sin\theta_i}
+ \frac{c_{i}^2\sH^1(F_{i})^2\norm^o(\nu_{S_{i}})[\cot \theta_i+\cot \theta_{i+1}]}{\sH^1(S_{i})^2} +
\frac{c_{i+1}^2\sH^1(F_{i+1})^2\norm^o(\nu_{S_{i+1}})}{\sH^1(D\cap S_{i+1})^2\sin\theta_{i+1}}\Big)\,d\sH^ 1,
\end{multline*}
where $\Gamma(t)=\cup_{j=1}^n S_j(t)$ is the polygonal curve parallel to $\Gamma,$ satisfying $\scalarp{H(S_i,S_i(t))}{\nu_{S_i}}=th_i$ for small $|t|$ and  $H_j(S_j,S_j(t))=0$
for $j\ne i.$ 
\end{lemma}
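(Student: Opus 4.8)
The plan is to combine the two computations already carried out in the preceding pages. First I would note that the quantity $\frac{d}{dt}\cF_\alpha(\Gamma(t),D)|_{t=0}$ has already been expressed (in the displayed formula just before \eqref{ashuc7bnn}) as a sum of a ``length'' contribution and an ``$\alpha$-curvature'' contribution, each written as an integral over $S_i$. The $\alpha$-term there is literally identical to the $\alpha$-term in the statement, so nothing needs to be done with it; the entire content of the lemma is the identification of the length contribution
$$
-\int_{S_i} \frac{h_i}{\sH^1(S_i)}\Big(\tfrac{\norm^o(\nu_{S_{i-1}})}{\sin\theta_i} + \norm^o(\nu_{S_i})[\cot\theta_i+\cot\theta_{i+1}] + \tfrac{\norm^o(\nu_{S_{i+1}})}{\sin\theta_{i+1}}\Big)\,d\sH^1
$$
with $\int_{S_i} \frac{c_i h_i \sH^1(F_i)}{\sH^1(S_i)}\,d\sH^1$.

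The key step is therefore just to invoke the identity \eqref{ashuc7bnn}: since $\Gamma$ is admissible, the unit normals $\nu_{S_{i-1}},\nu_{S_i},\nu_{S_{i+1}}$ coincide with the normals $\nu_{F_{i-1}},\nu_{F_i},\nu_{F_{i+1}}$ of three consecutive facets of $W^\norm$, and the angles $\theta_i,\theta_{i+1}$ of $\Gamma$ equal the corresponding exterior angles at the shared vertices of these facets. Hence the bracket in the length term equals $-c_i\,\sH^1(F_i)$, where $c_i$ is exactly the transition number appearing in \eqref{curva_definiton}. Substituting this into the length integral turns $-\frac{h_i}{\sH^1(S_i)}\cdot(-c_i\sH^1(F_i)) = \frac{c_i h_i \sH^1(F_i)}{\sH^1(S_i)}$, which is the claimed first term. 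One should also remark that the three cases (a), (b), (c) of Fig. \ref{fig:betayin} correspond precisely to the three cases ($c_i=-1,0,+1$) in the definition of the $\norm$-curvature, so the sign bookkeeping is consistent.

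I would also include a short preliminary sentence recalling \emph{why} the displayed pre-formula is valid: one chooses a disc $D$ compactly containing $S_i$ and all the translates $S_i(t)$ for small $|t|$, so that only the three segments $S_{i-1},S_i,S_{i+1}$ change length and the length changes are given by \eqref{length_changed_hhh} with $h_{i-1}=h_{i+1}=0$; differentiating the resulting expression for $\cF_\alpha(\Gamma(t),D)-\cF_\alpha(\Gamma,D)$ at $t=0$ and using that $\frac{d}{dt}\sH^1(D\cap S_j(t)) = -\frac{h_i}{\sin\theta_{\bullet}}$ or $-h_i(\cot\theta_i+\cot\theta_{i+1})$ gives the pre-formula. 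Since all of this is essentially displayed already in the excerpt, the proof is really a two-line deduction.

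There is no serious obstacle here: the only point requiring care is the geometric identity \eqref{ashuc7bnn}, but that has already been proved in full in the text preceding the lemma (via the computation with $O$, $Q_{i-1},Q_i,Q_{i+1}$, $N_i,N_{i+1}$ and the angles $\beta_{i-1},\beta_i$). So the ``hard part'' — if any — is purely notational: making sure the admissibility of $\Gamma$ is invoked to legitimately replace $\nu_{S_j}$ by $\nu_{F_j}$ and to match the angles, and checking that the transition number $c_i$ produced by \eqref{ashuc7bnn} is the same $c_i$ that enters \eqref{curva_definiton}. I expect the proof to occupy only a few lines.
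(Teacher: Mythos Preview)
Your proposal is correct and matches the paper's approach exactly: the paper does not even provide a separate proof of this lemma, since it follows immediately by substituting the identity \eqref{ashuc7bnn} into the displayed pre-formula derived just above. Your identification of the $\alpha$-term as already being in final form, and of the length term as requiring only \eqref{ashuc7bnn} together with admissibility to replace $\nu_{S_j}$ by $\nu_{F_j}$, is precisely the content of the lemma.
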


Notice that using \eqref{ashuc7bnn}, we can represent the first variation also as 
\begin{multline*}
\frac{d}{dt}\cF_\alpha(\Gamma(t),D) \Big|_{t=0} = - h_i\Bigg(
\int_{S_i} \tfrac{\norm^o(\nu_{S_{i-1}})}{\sH^1(S_{i})} \Big(1- \tfrac{\alpha c_{i-1}^2 \sH^1(F_{i-1})^2}{\sH^1(D\cap S_{i-1})^2}\Big) dx\\
+
\int_{S_i} \tfrac{\norm^o(\nu_{S_i}) [\cot\theta_i + \cot\theta_{i+1}]}{\sH^1(S_{i})} \Big(1- \tfrac{\alpha c_i^2 \sH^1(F_i)^2}{\sH^1(S_i)^2}\Big)dx
+
\int_{S_i} \tfrac{\norm^o(\nu_{S_{i+1}})}{\sH^1(S_{i})} \Big(1- \tfrac{\alpha c_{i+1}^2 \sH^1(F_{i+1})^2}{\sH^1(D\cap S_{i+1})^2}\Big)dx
\Bigg).
\end{multline*}

\subsection{Gradient flow  associated to the crystalline elastic functional}

Let $\norm$ be a crystalline anisotropy. We study the gradient flow associated to $\cF_\alpha.$ Given an admissible
$\norm$-regular polygonal curve $\Gamma^0$ and $T>0,$ consider a family $\{\Gamma(t)\}_{t\in[0,T)}$ of admissible polygonal curves with
$\Gamma(0)=\Gamma^0$ defined as in the standard crystalline curvature flow: during the evolution 
segments of $\Gamma(t)$ translate along the normal direction with velocity equal to the negative of the first variation of $\cF_\alpha.$ In
particular, $\Gamma(t)$ is expected to be parallel to $\Gamma^0$ for any $t\in[0,T).$ Since the segments $S_i(t)$
translate along the normal, we can uniquely define corresponding signed heights
$h_i(t):=\scalarp{H(S_i^0,S_i(t))}{\nu_{S_i^0}}.$ The derivative of these heights plays the role of the velocity
of the translation, in other words, the normal velocity of $\Gamma(t).$

\begin{definition}[Crystalline elastic flow]\label{def:cryst_elas_flow}
Given an admissible polygonal curve $\Gamma^0:=\cup_{i=1}^n S_i^0$ and $T\in(0,+\infty],$ we say a family
$\{\Gamma(t)\}_{t\in[0,T)}$ is a \emph{crystalline elastic flow} starting from $\Gamma^0$ (shortly
\emph{admissible crystalline elastic flow}) if:
\begin{itemize}
\item $\Gamma(t):=\cup_{i=1}^n S_i(t)$ is a polygonal curve, parallel to $\Gamma^0,$

\item for each segment $S_i,$ the associated signed heights $h_i(t):=\scalarp{H(S_i^0, S_i(t))}{\nu_{S_i^0}}$ belong  to $C^0([0,T))\cap
C^1(0,T)$ with $h_i(0)=0$ and satisfy the system of ODEs
\begin{equation}\label{elastic_ode123}
\frac{h_i'}{\norm^o(\nu_{S_i})} = -\frac{c_i\sH^1(F_i)}{\sH^ 1(S_i)}
- \frac{\alpha}{\sH^1(S_i)} \Big(\frac{c_{i-1}^2\delta_{i-1}}{\sH^1(S_{i-1})^2\sin\theta_i}
+ \frac{c_{i}^2\delta_i [\cot \theta_i+\cot \theta_{i+1}]}{\sH^1(S_{i})^2}
+\frac{c_{i+1}^2 \delta_{i+1}}{\sH^1(S_{i+1})^2\sin\theta_{i+1}}\Big)
\end{equation}
in $(0,T),$ where  $F_j$ is the unique facet of $W^\norm$ with $\nu_{F_j}=\nu_{S_j},$
\begin{equation}\label{def:delta_is}
\delta_j:=\sH^1(F_j)^2 \norm^o(\nu_{S_j}),\quad j=1,\ldots,n,
\end{equation}
and we set $\delta_0:=\delta_n$ and $\delta_{n+1}:=\delta_1.$
\end{itemize}
\end{definition}

Notice that \eqref{elastic_ode123} makes sense also when $\Gamma^0$ is unbounded, since in this case $S_1$ and
$S_n$ are half-lines and by our convention, $c_1=c_n=0,$ and \eqref{elastic_ode123} is taken for $i=2,\ldots,n-1.$ Also, note that by 
parallelness, $\nu_{S_j}$ and
$\theta_j$ in \eqref{elastic_ode123} do not depend on $t.$ Moreover, as $h_i$ is continuous in $[0,T),$ by
\eqref{length_changed_hhh} all lengths $\sH^1(S_i(t))$ are represented linearly in $h$ and thus a posteriori,
from \eqref{elastic_ode123} we conclude in fact $h_i'$ is continuously differentiable in $(0,T),$ and hence, by
bootstrap, $h_i\in C^0([0,T))\cap C^\infty(0,T).$

\begin{lemma}[A priori estimates]\label{lem:apriori_estocada}
Given a polygonal curve $\Gamma^0=\cup_{i=1}^n S_i^0,$ let $\{\Gamma(t)\}_{t\in[0,T]}$ be a crystalline elastic
flow starting from $\Gamma^0.$ Let
$$
\Delta_1:= \frac{1}{2}\, \min_{1\le i\le n} \frac{\sH^1(S_i^0)}{\frac{1}{|\sin \theta_i|} + |\cot\theta_i +
\cot\theta_{i+1}| + \frac{1}{|\sin \theta_{i+1}|}}
$$
and let $T'\le T$ be the smallest positive time (if any) with $\sH^1(S_i(T'))\le \frac{1}{2}\sH^1(S_i^0)$ for some
$i\in\{1,\ldots,n\}.$ Then
$$
\text{either}\quad  T'=T\quad \text{or}\quad T'\ge \frac{\Delta_1}{\Delta_2},
$$
where 
\begin{align*}
\Delta_2:=\max_{1\le i\le n}\norm^o(\nu_{S_i})\Bigg\{ \tfrac{2\sH^1(F_i)}{\sH^ 1(S_i^0)} + \tfrac{2\alpha}{\sH^1(S_i^0)}
\Big(\tfrac{ 4c_{i-1}^2 \delta_{i-1} }{\sH^1(S_{i-1}^0)^2|\sin\theta_i|} +
\tfrac{4c_{i}^2 \delta_i |\cot \theta_i+\cot \theta_{i+1}|}{\sH^1(S_i^0)^2} +
\tfrac{4c_{i+1}^2 \delta_{i+1}  }{\sH^1(S_{i+1}^0)^2|\sin\theta_{i+1}|}\Big)\Bigg\}.
\end{align*}
\end{lemma}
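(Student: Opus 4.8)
The plan is a Gr\"onwall-type bootstrap: as long as no segment has lost half of its length, the right-hand side of \eqref{elastic_ode123} stays bounded by $\Delta_2$, so the heights $h_i$ grow at most linearly in time; feeding this back into the length formula \eqref{length_changed_hhh} then shows that reaching the halved-length regime costs time at least $\Delta_1/\Delta_2$. If no segment halves its length on $[0,T]$ there is nothing to prove ($T'=T$ by convention), so I would assume from the outset that such a first time $T'$ exists and, in the end, that $T'<T$.

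First I would record, by the remark following Definition \ref{def:cryst_elas_flow}, that each length $\sH^1(S_i(t))$ is an affine --- in particular continuous --- function of $h(t)$, while $\sH^1(F_i),$ $\norm^o(\nu_{S_i}),$ $\theta_i$ are $t$-independent by parallelness. By the minimality of $T'$ and continuity, $\sH^1(S_i(t))>\tfrac12\,\sH^1(S_i^0)$ for every $i$ and every $t\in[0,T')$. Fixing $t\in(0,T')$ and taking absolute values in \eqref{elastic_ode123}, I would then use $|c_j|\le1$, the definition \eqref{def:delta_is} of $\delta_j$, and the elementary bounds $\tfrac{1}{\sH^1(S_j(t))}<\tfrac{2}{\sH^1(S_j^0)}$ and $\tfrac{1}{\sH^1(S_j(t))^2}<\tfrac{4}{\sH^1(S_j^0)^2}$ to obtain exactly $|h_i'(t)|\le\Delta_2$ for every $i$ --- this is precisely why $\Delta_2$ carries the factors $2$ and $4$. (When $\Gamma^0$ is unbounded the half-line indices $i=1,n$ have $c_1=c_n=0$, hence $h_1\equiv h_n\equiv 0$, and \eqref{elastic_ode123} is imposed only for $2\le i\le n-1$, so these indices play no role.) Since $h_i(0)=0$, integrating gives $|h_i(t)|\le\Delta_2\,t$ for all $i$ and all $t\in[0,T')$.

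Next I would substitute these bounds into \eqref{length_changed_hhh}, estimating the three terms on the right crudely, to get
\[
\sH^1(S_i(t))\ \ge\ \sH^1(S_i^0)-\Big(\tfrac{1}{|\sin\theta_i|}+|\cot\theta_i+\cot\theta_{i+1}|+\tfrac{1}{|\sin\theta_{i+1}|}\Big)\,\Delta_2\,t
\]
for every $i$ and $t\in[0,T')$. If now $T'<T$, then by continuity and the minimality of $T'$ there is an index $i_0$ with $\sH^1(S_{i_0}(T'))=\tfrac12\,\sH^1(S_{i_0}^0)$; letting $t\nearrow T'$ in the last display with $i=i_0$ and rearranging yields
\[
T'\ \ge\ \frac{1}{2\Delta_2}\cdot\frac{\sH^1(S_{i_0}^0)}{\tfrac{1}{|\sin\theta_{i_0}|}+|\cot\theta_{i_0}+\cot\theta_{i_0+1}|+\tfrac{1}{|\sin\theta_{i_0+1}|}}\ \ge\ \frac{\Delta_1}{\Delta_2},
\]
the last inequality because $\Delta_1$ equals one half of the minimum over $i$ of that quotient. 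Hence either $T'=T$ or $T'\ge\Delta_1/\Delta_2$.

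I do not expect a genuine obstacle here: the argument is a one-shot Gr\"onwall estimate. The only points needing care are the choice of $\Delta_2$ so that it absorbs the constants produced by the ``every length at least halved'' assumption (the $2$'s and $4$'s above), and the bookkeeping around $T'$ --- namely that at $T'$ some segment length \emph{equals}, rather than strictly undershoots, one half of its initial value, which is exactly what the continuity of $t\mapsto\sH^1(S_i(t))$ provides.
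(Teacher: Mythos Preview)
Your proposal is correct and follows essentially the same approach as the paper's proof: bound $|h_i'|\le\Delta_2$ on $(0,T')$ using the half-length lower bound, integrate to $|h_i|\le\Delta_2 t$, feed this into \eqref{length_changed_hhh}, and read off $T'\ge\Delta_1/\Delta_2$ at the index $i_0$ where equality $\sH^1(S_{i_0}(T'))=\tfrac12\sH^1(S_{i_0}^0)$ holds. Your write-up is in fact more detailed than the paper's (e.g., explaining the origin of the factors $2$ and $4$ in $\Delta_2$ and the bookkeeping for half-lines), but the argument is the same.
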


\noindent 
Note that $\Delta_1$ depends ony on the length of the segments and angles of $\Gamma^0,$ while  $\Delta_2$ depends only on $\alpha,$ $\norm,$ the length of the segments and angles of $\Gamma^0.$

\begin{proof}
Assume that $T'<T.$ Since $\sH^1(S_i(t))\ge \frac{1}{2}\sH^1(S_i^0)$ for $t\in [0,T']$ for all $i=1,\ldots,n,$ by
\eqref{elastic_ode123},
$|h_i'| \le \Delta_2$ in $(0,T').$ Hence $|h_i(t)|\le \Delta_2t$ for any
$t\in(0,T').$ Moreover, by the definition of $T'$ there exists $i_o$ such that
$\sH^1(S_{i_o}(T'))=\frac{1}{2}\sH^1(S_{i_o}^0).$ Then
by \eqref{length_changed_hhh} and the inequality $|h_i(T')|\le \Delta_2T'$ we have 
$$
\frac12\sH^1(S_{i_o}^0) = \sH^1(S_{i_o}(T')) \ge \sH^1(S_{i_o}^0) - 
\Big(\tfrac{1}{|\sin \theta_{i_o}|} + |\cot\theta_{i_o} + \cot\theta_{{i_o}+1}| + \tfrac{1}{|\sin \theta_{i_o+1}|}\Big) \Delta_2T'.
$$
This and the definition of $\Delta_1$ imply $T'\ge \frac{\Delta_1}{\Delta_2}.$ 
\end{proof}

\subsection{Gradient flow structure}

In this section we show that, as expected, our definition of crystalline elastic flow implies that the associated curves
decrease their crystalline elastic energy along the flow. Since we may have unbounded half-lines,
we need to localize the corresponding energy.

\begin{theorem}[Gradient flow]\label{teo:cl_elastc_gradflow}
Let $\norm$ be a crystalline anisotropy, $\{\Gamma(t)\}_{t\in[0,T)}$ be a (bounded or unbounded)  admissible crystalline elastic flow
for some $T\in(0,+\infty].$ Then for any $t\in[0,T)$ and any disc compactly containing all segments $S_i(t)$ of $\Gamma(t)$ one has
\begin{equation}\label{grad_floqasdf}
\frac{d}{dt}\cF_\alpha(\Gamma(t), D) = - \sum_{i=1}^n \frac{|h_i'(t)|^2\cH^1(D\cap S_i(t))}{\norm^o(\nu_{S_i(t)})},
\end{equation}
where $h_i\equiv0$ if $S_i$ is a half-line. In particular, the map $t\in[0,T) \mapsto \cF_\alpha(\Gamma(t),D)$ is 
nonincreasing. Finally, if $\Gamma(0)$ is bounded, one can take $D=\R^2$ in \eqref{grad_floqasdf}.
\end{theorem}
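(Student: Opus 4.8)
The plan is to regard $\cF_\alpha(\Gamma(t),D)$ as a function of the signed heights $h_1(t),\dots,h_n(t)$, to differentiate it by the chain rule, and to observe that each partial derivative is exactly the quantity furnished by the First variation formula; since, by the very definition \eqref{elastic_ode123} of the flow, that partial derivative is proportional to $h_i'$, formula \eqref{grad_floqasdf} will follow immediately. No subtle analysis is needed beyond a regularity remark and careful bookkeeping of the localization to $D$.

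First I would fix $t_0\in(0,T)$ and a disc $D$ compactly containing all segments $S_i(t_0)$. By Kuratowski continuity of $s\mapsto\Gamma(s)$, $D$ still compactly contains every bounded segment $S_i(s)$ for $s$ near $t_0$, so $\sH^1(D\cap S_i(s))=\sH^1(S_i(s))$ there for such $i$, while $\sH^1(D\cap S_i(s))<+\infty$ for half-lines. Since $\Gamma(s)$ is parallel to $\Gamma^0$, by \eqref{length_changed_hhh} every length $\sH^1(D\cap S_i(s))$ is an affine function of the heights $h(s)=(h_1(s),\dots,h_n(s))$, strictly positive along the flow; hence $\cF_\alpha(\Gamma(\cdot),D)$ is a rational function of $h$ with positive denominators, in particular $C^1$, and since $h_i\in C^1((0,T))$ the map $s\mapsto\cF_\alpha(\Gamma(s),D)$ is $C^1$ near $t_0$.

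Then I would apply the chain rule,
$$\frac{d}{ds}\cF_\alpha(\Gamma(s),D)\Big|_{s=t_0}=\sum_{i=1}^n\big(\partial_{h_i}\cF_\alpha\big)(h(t_0))\,h_i'(t_0),$$
where $(\partial_{h_i}\cF_\alpha)(h(t_0))$ is the partial derivative of $\cF_\alpha$ in the $i$-th height at the configuration $\Gamma(t_0)$ — precisely the quantity computed by the First variation formula applied to the admissible curve $\Gamma(t_0)$ with unit test speed. Evaluating the integrals there (the integrands being constant along $S_i$) and recalling $\delta_j=\sH^1(F_j)^2\norm^o(\nu_{S_j})$, this derivative equals
$$c_i\,\sH^1(F_i)+\alpha\Big(\tfrac{c_{i-1}^2\delta_{i-1}}{\sH^1(S_{i-1}(t_0))^2\sin\theta_i}+\tfrac{c_i^2\delta_i(\cot\theta_i+\cot\theta_{i+1})}{\sH^1(S_i(t_0))^2}+\tfrac{c_{i+1}^2\delta_{i+1}}{\sH^1(S_{i+1}(t_0))^2\sin\theta_{i+1}}\Big),$$
which, by the defining ODE \eqref{elastic_ode123} at time $t_0$, is precisely $-\,\sH^1(S_i(t_0))\,h_i'(t_0)/\norm^o(\nu_{S_i})=-\,\sH^1(D\cap S_i(t_0))\,h_i'(t_0)/\norm^o(\nu_{S_i})$. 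Substituting this into the chain rule gives \eqref{grad_floqasdf} at $t_0$ (half-lines contributing $0$, since $h_i\equiv0$ there). Since the right-hand side of \eqref{elastic_ode123} extends continuously up to $t=0$, one has $h_i\in C^1([0,T))$ and the identity holds at $t=0$ as a right derivative as well; as that right-hand side is $\le0$ and $s\mapsto\cF_\alpha(\Gamma(s),D)$ is continuous on $[0,T)$, the map is nonincreasing there. Finally, if $\Gamma(0)$ is bounded, then every $\Gamma(t)$ has only bounded segments, so $\cF_\alpha(\Gamma(t),\R^2)<+\infty$ and coincides with $\cF_\alpha(\Gamma(t),D)$ for $D$ large, whence \eqref{grad_floqasdf} also holds with $D=\R^2$.

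The only genuinely delicate point is the bookkeeping in the localization to $D$ and the treatment of half-lines: I must check that the localized lengths $\sH^1(D\cap S_{i\pm1})$ appearing in the first-variation integrals may be replaced by $\sH^1(S_{i\pm1})$ in the terms that survive — which is immediate, since those terms carry the factor $c_{i\pm1}^2$, vanishing on half-lines, whereas a genuine segment $S_{i\pm1}$ lies entirely in $D$ — and that the change of $\sH^1(D\cap S_i)$ for a half-line $S_i$, caused only by the sliding of its finite endpoint at rate $-h'_{\mathrm{neighbour}}/\sin\theta$ consistently with \eqref{length_changed_hhh}, is already accounted for by the first-variation computation. Beyond this, the substance of the statement is simply the algebraic observation that \eqref{elastic_ode123} is the gradient-flow ODE $h_i'/\norm^o(\nu_{S_i})=-\,\partial_{h_i}\cF_\alpha/\sH^1(S_i)$.
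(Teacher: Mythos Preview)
Your argument is correct and follows essentially the same route as the paper: differentiate the energy with respect to the heights and substitute the flow ODE \eqref{elastic_ode123}. The only difference is organizational---you invoke the already-proved First variation lemma to identify each $\partial_{h_i}\cF_\alpha$, whereas the paper redoes that computation inline (differentiating the lengths via \eqref{length_changed_hhh}, then performing the index relabelling and applying \eqref{ashuc7bnn} by hand); your packaging is a bit more economical, and your handling of the localization to $D$ and of the half-line terms via the vanishing factor $c_{i\pm1}^2$ is exactly what the paper's direct computation amounts to.
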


\begin{proof}
For shortness, let us drop the dependence on $t.$ 
First assume that $\Gamma^0:=\Gamma(0)$ is closed and $D=\R^2.$ By 
parallelness,
$\nu_{S_i}=\nu_{S_i^0},$ and hence  by \eqref{curva_definiton},
\begin{align*}
\cF_\alpha(\Gamma) = & \sum_{i=1}^n \int_{S_i} \left(1+ \alpha \left(
\kappa_{S_i}^\norm\right)^2\right)\norm^o(\nu_{S_i})\,d\sH^1 =
\sum_{i=1}^n \norm^o(\nu_{S_i^0}) \Big(\sH^1(S_i) + \tfrac{\alpha c_i^2 \sH^1(F_i)^2}{\sH^1(S_i)} \Big),
\end{align*}
where $c_i\in\{-1,0,1\}$ is the transition number of $S_i$ and $F_i$ is the unique facet of $W^\norm$ with
$\nu_{S_i}=\nu_{F_i}.$
Moreover, by \eqref{length_changed_hhh} 
\begin{equation}\label{ashdvbnm}
\sH^1(S_i) = \sH^1(S_i^0) - \Big( \frac{h_{i-1}}{\sin\theta_i} + h_i[\cot\theta_i+\cot\theta_{i+1}] +
\frac{h_{i+1}}{\sin\theta_{i+1}}\Big),
\end{equation}
where $\theta_i$ are the angles of $\Gamma^0.$ Thus, 
\begin{equation}\label{ahstszgbbb}
\frac{d}{dt} \cF_\alpha(\Gamma) = -\sum_{i=1}^n \norm^o(\nu_{S_i^o})\Big(\tfrac{h_{i-1}'}{\sin\theta_i} +
h_i'[\cot\theta_i+\cot\theta_{i+1}] + \tfrac{h_{i+1}'}{\sin\theta_{i+1}}\Big)\Big(1-\tfrac{\alpha
c_i^2\sH^1(F_i)^2}{\sH^1(S_i)^2}\Big)
\end{equation}
By our conventions on indexation, after relabelling the indices of the sums we can write
\begin{equation*}%\label{relab12233}
\sum_{i=1}^n \norm^o(\nu_{S_i^o})\tfrac{h_{i-1}'}{\sin\theta_i}
\Big(1-\tfrac{\alpha c_i^2\sH^1(F_i)^2}{\sH^1(S_i)^2}\Big) =
\sum_{i=1}^n \norm^o(\nu_{S_{i+1}^o})\tfrac{h_i'}{\sin\theta_{i+1}}
\Big(1-\tfrac{\alpha c_{i+1}^2\sH^1(F_{i+1})^2}{\sH^1(S_{i+1})^2}\Big)
\end{equation*}
and 
\begin{equation*}%\label{relab12234}
\sum_{i=1}^n \norm^o(\nu_{S_i^o})\tfrac{h_{i+1}'}{\sin\theta_{i+1}}
\Big(1-\tfrac{\alpha c_i^2\sH^1(F_i)^2}{\sH^1(S_i)^2}\Big) =
\sum_{i=1}^n \norm^o(\nu_{S_{i-1}^o})\tfrac{h_i'}{\sin\theta_{i}}
\Big(1-\tfrac{\alpha c_{i-1}^2\sH^1(F_{i-1})^2}{\sH^1(S_{i-1})^2}\Big).
\end{equation*}
We can also represent \eqref{ahstszgbbb} as 
\begin{multline*}
\frac{d}{dt} \cF_\alpha(\Gamma) = - \sum_{i=1}^n h_i' \Big( \tfrac{\norm^o(\nu_{F_{i-1}})}{\sin\theta_i} +
\norm^o(\nu_{F_i})[\cot\theta_i+\cot\theta_{i+1}] + \tfrac{\norm^o(\nu_{F_{i+1}})}{\sin\theta_{i+1}} \\
+ \alpha \Big( \tfrac{c_{i-1}^2\sH^1(F_{i-1})^2 \norm^o(\nu_{S_{i-1}})}{\sH^1(S_{i-1})^2\sin\theta_i} 
+ \tfrac{c_{i}^2\sH^1(F_{i})^2\norm^o(\nu_{S_{i}})[\cot \theta_i+\cot \theta_{i+1}]}{\sH^1(S_{i})^2} 
+ \tfrac{c_{i+1}^2\sH^1(F_{i+1})^2\norm^o(\nu_{S_{i+1}})}{\sH^1(S_{i+1})^2\sin\theta_{i+1}}\Big)\Big).
\end{multline*}
Thus, recalling the identity \eqref{ashuc7bnn} and the ODE \eqref{elastic_ode123}, the last
equality reads as
\begin{equation*}
\frac{d}{dt} \cF_\alpha(\Gamma) = - \sum_{i=1}^n \tfrac{ |h_i'|^2\sH^1(S_i) }{\norm^o(\nu_{S_i})} 
\end{equation*}
and \eqref{grad_floqasdf} follows.

Now assume that $\Gamma(0)$ is unbounded and $D$ is a disc compactly containing all segments of $\Gamma(t).$ In this case $S_1$ and $S_n$ are half-lines and hence  
\begin{align*}
\cF_\alpha(\Gamma,D) = & \norm^o(S_1^0) \sH^1(D\cap S_1)+ \sum_{i=2}^{n-1} \norm^o(\nu_{S_i^0}) \Big(\sH^1(S_i) +
\tfrac{\alpha c_i^2 \sH^1(F_i)^2}{\sH^1(S_i)} \Big) + \norm^o(S_n^0) \sH^1(D\cap S_n).
\end{align*}
In this case, for $i=1$ and $i=n$ the equality \eqref{ashdvbnm}  is represented as 
$$
\sH^1(D\cap S_1) = \sH^1(D\cap S_1^0) - \frac{h_2}{\sin\theta_2},\quad 
\sH^1(D\cap S_n) = \sH^1(D\cap S_n^0) - \frac{h_{n-1}}{\sin\theta_n}.
$$
Thus, 
\begin{multline}\label{dawq12}
\frac{d}{dt} \cF_\alpha(\Gamma,D) = -\tfrac{h_2'\norm^o(\nu_{S_2^o})}{\sin\theta_2} \\
-\sum_{i=2}^{n-1}
\norm^o(\nu_{S_i^o})\Big(\tfrac{h_{i-1}'}{\sin\theta_i} + h_i'[\cot\theta_i+\cot\theta_{i+1}] +
\tfrac{h_{i+1}'}{\sin\theta_{i+1}}\Big)\Big(1-\tfrac{\alpha c_i^2\sH^1(F_i)^2}{\sH^1(S_i)^2}\Big) -
\tfrac{h_{n-1}'\norm^o(\nu_{S_{n-1}^o})}{\sin\theta_n}.
\end{multline}
As $h_1,h_n\equiv0,$ 
\begin{equation*}%\label{rel33}
\sum_{i=2}^{n-1} \norm^o(\nu_{S_i^o})\tfrac{h_{i-1}'}{\sin\theta_i}
\Big(1-\tfrac{\alpha c_i^2\sH^1(F_i)^2}{\sH^1(S_i)^2}\Big) =
\sum_{i=2}^{n-2} \norm^o(\nu_{S_{i+1}^o})\tfrac{h_i'}{\sin\theta_{i+1}}
\Big(1-\tfrac{\alpha c_{i+1}^2\sH^1(F_{i+1})^2}{\sH^1(S_{i+1})^2}\Big)
\end{equation*}
and 
\begin{equation*}%\label{rel34}
\sum_{i=2}^{n-1} \norm^o(\nu_{S_i^o})\tfrac{h_{i+1}'}{\sin\theta_{i+1}}
\Big(1-\tfrac{\alpha c_i^2\sH^1(F_i)^2}{\sH^1(S_i)^2}\Big) =
\sum_{i=3}^{n-1} \norm^o(\nu_{S_{i-1}^o})\tfrac{h_i'}{\sin\theta_{i}}
\Big(1-\tfrac{\alpha c_{i-1}^2\sH^1(F_{i-1})^2}{\sH^1(S_{i-1})^2}\Big),
\end{equation*}
and hence, as above, \eqref{dawq12} reads as 
$$
\frac{d}{dt} \cF_\alpha(\Gamma,D) = - \sum_{i=2}^{n-1} \tfrac{|h_i'|^2 \sH^1(S_i)}{\norm^o(\nu_{S_i})}
$$
and \eqref{grad_floqasdf} follows.
\end{proof}

Integrating \eqref{grad_floqasdf} we deduce the following energy dissipation equality
\begin{equation}\label{ZgEW}
\cF_\alpha(\Gamma^0,D) =\cF_\alpha(\Gamma(t),D) + \int_0^t \tfrac{|h'(s)|^2\sH^1(S_i(s))}{\norm^o(\nu_{S_i(s)})}ds,
\end{equation}
which implies some upper and lower bounds for the length of segments.

\begin{corollary}\label{cor:segments_nonzero_curva}
Let $\norm$ be a crystalline anisotropy, $\Gamma^0:=\cup_{i=1}^nS_i^0$ be an admissible polygonal curve and
$\{\Gamma(t)\}_{t\in[0,T)}$ be a crystalline elastic flow starting from $\Gamma^0$ for some $T\in(0,+\infty].$
Then for any $T'\in (0,T)$ and $t\in [0,T']$
\begin{equation}\label{length_bnd12}
\frac{1}{c_\norm}\,\cF_\alpha(\Gamma^0,D) \ge  \sum_{i=1}^n \sH^1(D\cap S_i(t)) 
\end{equation}
and 
\begin{equation}\label{length_bnd13}
\sH^1(D\cap S_i(t)) \ge  \frac{\alpha c_\norm c_i^2\sH^1(F_i)^2}{\cF_\alpha(\Gamma^0,D)}, \quad i=1,\ldots,n,
\end{equation}
where $c_\norm$ is given by \eqref{norm_bounds_12120} and $D$ is any disc compactly containing all bounded
segments of $\Gamma(t)$ for all $t\in[0,T'],$ which can be taken $\R^2$ if $\Gamma^0$ is bounded.
\end{corollary}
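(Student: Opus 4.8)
The plan is to obtain both estimates from the energy monotonicity of Theorem \ref{teo:cl_elastc_gradflow}, discarding in each case the appropriate nonnegative term of the integrand of $\cF_\alpha$. First I would fix $T'\in(0,T)$ and a disc $D$ as in the statement; such a disc exists because the signed heights $h_i$ lie in $C^0([0,T))$, so the bounded segments $S_i(t)$ stay in a fixed compact set as $t$ runs over the compact interval $[0,T']$ (when $\Gamma^0$ is bounded one simply takes $D=\R^2$, which is permitted by the last assertion of Theorem \ref{teo:cl_elastc_gradflow}). Since $\Gamma^0$ is admissible, $\cF_\alpha(\Gamma^0,D)<+\infty$; and by Theorem \ref{teo:cl_elastc_gradflow} the map $t\mapsto\cF_\alpha(\Gamma(t),D)$ is nonincreasing, whence $\cF_\alpha(\Gamma(t),D)\le\cF_\alpha(\Gamma^0,D)$ for all $t\in[0,T']$. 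Using that along a parallel flow one has $\nu_{S_i(t)}=\nu_{S_i^0}$, that $\kappa^\norm$ is constant on each segment/half-line, and that the transition numbers $c_i$ and facets $F_i$ are preserved, I would rewrite
\[
\cF_\alpha(\Gamma(t),D)=\sum_{i=1}^n\norm^o(\nu_{S_i^0})\bigl(1+\alpha(\kappa_{S_i(t)}^\norm)^2\bigr)\sH^1(D\cap S_i(t)).
\]

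For \eqref{length_bnd12} I would drop the curvature terms and use $\norm^o\ge c_\norm$ on $\S^1$ (a consequence of \eqref{norm_bounds_12120}), getting $\cF_\alpha(\Gamma(t),D)\ge c_\norm\sum_i\sH^1(D\cap S_i(t))$, which together with the monotonicity gives the bound. For \eqref{length_bnd13} I would argue separately for each $i$: if $S_i$ is a half-line then $c_i=0$ and there is nothing to prove; otherwise $S_i(t)$ is bounded, so $D\cap S_i(t)=S_i(t)$ by the choice of $D$ and, by \eqref{curva_definiton}, $\kappa_{S_i(t)}^\norm\,\sH^1(S_i(t))=c_i\sH^1(F_i)$. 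Dropping instead the summand $1$ and again bounding $\norm^o$ below by $c_\norm$,
\[
\cF_\alpha(\Gamma(t),D)\ \ge\ \alpha\,\norm^o(\nu_{S_i^0})\,(\kappa_{S_i(t)}^\norm)^2\,\sH^1(S_i(t))\ =\ \frac{\alpha\,\norm^o(\nu_{S_i^0})\,c_i^2\,\sH^1(F_i)^2}{\sH^1(S_i(t))}\ \ge\ \frac{\alpha\,c_\norm\,c_i^2\,\sH^1(F_i)^2}{\sH^1(S_i(t))},
\]
and rearranging, together with $\cF_\alpha(\Gamma(t),D)\le\cF_\alpha(\Gamma^0,D)$, yields \eqref{length_bnd13}.

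The proof is routine once Theorem \ref{teo:cl_elastc_gradflow} is in hand; the only mild subtleties are the existence of a single disc $D$ valid over the whole compact interval $[0,T']$ (and the admissibility of $D=\R^2$ for bounded $\Gamma^0$), the identity $D\cap S_i(t)=S_i(t)$ for the bounded segments, and the trivial but separate handling of the half-lines in \eqref{length_bnd13}, where the transition number vanishes.
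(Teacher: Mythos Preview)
Your proposal is correct and follows exactly the same approach as the paper: the paper's own justification is a one-sentence remark that \eqref{length_bnd12} comes from the anisotropic-length part of the energy dissipation identity \eqref{ZgEW} and \eqref{length_bnd13} from its $\norm$-curvature part, which is precisely what you spell out. Your added details (existence of a single disc $D$ over $[0,T']$, the identity $D\cap S_i(t)=S_i(t)$ for bounded segments, and the trivial half-line case) are the natural clarifications and do not depart from the paper's argument.
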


The estimate \eqref{length_bnd12} follows from the equality \eqref{ZgEW} using the anisotropic length   of $S_i,$ while \eqref{length_bnd13} follows from the $\norm$-curvature part of $\cF_\alpha$ in  \eqref{ZgEW}.

\section{Existence, uniqueness and  behaviour at the maximal time}\label{sec:existence}

In this section we prove the following result.

\begin{theorem}[Crystalline elastic flow]\label{teo:existence}
Let $\norm$ be a crystalline anisotropy and $\Gamma^0:=\cup_{i=1}^n S_i^0$ be an admissible polygonal curve.
Then:
\begin{itemize}
\item[(a)] there exist a maximal time $T^\dag\in(0,+\infty]$ and a unique crystalline elastic flow
$\{\Gamma(t)\}_{t\in[0,T^\dag)}$ starting from $\Gamma^0;$

\item[(b)] $t\mapsto \Gamma(t)$ has the semigroup property, i.e., for any $t,s\ge0$ with $t+s<T^\dag,$
\begin{equation}\label{semigruops}
\Gamma(t+s;\Gamma^0) = \Gamma(t,\Gamma(s;\Gamma^0)),
\end{equation}
where $\Gamma(\cdot; \Sigma)$ is the crystalline elastic flow starting from $\Sigma;$ 

\item[(c)] if $\Gamma^0$ is bounded and $T^\dag<+\infty,$ then $t\mapsto \Gamma(t)$ is Kuratowski continuous in
$[0,T^\dag],$ the set
$$
\bigcup_{t\in[0,T^\dag]}\,\,\, \Gamma(t)
$$
is bounded and there exists an index $i\in\{1,\ldots,n\}$ with $c_i=0$ such that 
\begin{equation}\label{segment_disappear1}
\lim\limits_{t\nearrow T^\dag} \sH^1(S_i(t)) = 0.
\end{equation}
Thus, at the maximal time, at least one segment with zero $\norm$-curvature disappears. Moreover, the length of all  segments with nonzero curvature is bounded away from $0$ as $t\nearrow T^\dag.$
\end{itemize}
\end{theorem}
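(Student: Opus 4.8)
We reduce everything to the ODE system \eqref{elastic_ode123} for the height vector $h=(h_1,\dots,h_n)$.

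\emph{Parts (a) and (b).} By \eqref{length_changed_hhh} each length $\sH^1(S_i)$ is an affine function of $h$, so the right-hand side of \eqref{elastic_ode123} is a rational (indeed real-analytic) function of $h$ on the open set $U:=\{h\in\R^n:\ \sH^1(S_i)>0\ \text{for all }i\}\ni 0$ (in the unbounded case one keeps $h_1=h_n\equiv0$ and reads \eqref{elastic_ode123} for $2\le i\le n-1$). By Picard--Lindel\"of there is a unique maximal solution $h\in C^1([0,T^\dag);U)$, and, as already noted after Definition \ref{def:cryst_elas_flow}, bootstrapping through \eqref{length_changed_hhh} upgrades it to $h\in C^\infty((0,T^\dag))$. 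From $h(t)$ one reconstructs $\Gamma(t)$ exactly as in the proof of Lemma \ref{lem:construc_paral_curve}: move each $S_i^0$ onto the line $\ell_i(t):=\ell_i^0+h_i(t)\nu_{S_i^0}$ and intersect consecutive lines (never parallel, since $\theta_i\neq\pi$) to get the vertices $G_i(t)$; the resulting segments have lengths given by \eqref{length_changed_hhh}, hence are nondegenerate precisely because $h(t)\in U$, and their orientation cannot flip since $G_i(t)-G_{i-1}(t)=\lambda_i(t)\tau_{S_i^0}$ with $\lambda_i$ continuous, $\lambda_i(0)=\sH^1(S_i^0)>0$ and $|\lambda_i(t)|=\sH^1(\bar S_i(t))>0$. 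Thus $\Gamma(t)$ is parallel to $\Gamma^0$ and $\{\Gamma(t)\}_{t\in[0,T^\dag)}$ is \emph{the} crystalline elastic flow starting from $\Gamma^0$, which is (a). Since \eqref{elastic_ode123} is \emph{autonomous} (its right-hand side depends only on the fixed data $\nu_{S_j},\theta_j,\sH^1(F_j),c_j$ and on the current lengths), and since $\Sigma:=\Gamma(s;\Gamma^0)$ is parallel to $\Gamma^0$ -- so the heights $g(\tau)$ of $\Gamma(\cdot;\Sigma)$ add to $h(s)$ and \eqref{length_changed_hhh} gives the same lengths as the original flow shifted by $h(s)$ -- the map $\tau\mapsto h(s)+g(\tau)$ solves \eqref{elastic_ode123} with datum $h(s)$; uniqueness yields $h(s)+g(\tau)=h(s+\tau)$, i.e.\ \eqref{semigruops}. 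This is (b).

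\emph{Part (c): a priori bounds and the escape time.} Assume $\Gamma^0$ bounded and $T^\dag<+\infty$; write $L_i(t):=\sH^1(S_i(t))$ and $\cF^0:=\cF_\alpha(\Gamma^0)$. Theorem \ref{teo:cl_elastc_gradflow} and Corollary \ref{cor:segments_nonzero_curva} (with $D=\R^2$) give, for all $t\in[0,T^\dag)$,
$$
\sum_{i=1}^n L_i(t)\le \frac{\cF^0}{c_\norm}=:L,\qquad L_i(t)\ge \frac{\alpha c_\norm c_i^2\sH^1(F_i)^2}{\cF^0}=:\ell_i\quad(\ell_i>0\iff c_i\neq0),
$$
which already yields the last assertion of (c). Writing the force term as $f_i:=c_i\sH^1(F_i)+\alpha\big(\tfrac{c_{i-1}^2\delta_{i-1}}{L_{i-1}^2\sin\theta_i}+\tfrac{c_i^2\delta_i[\cot\theta_i+\cot\theta_{i+1}]}{L_i^2}+\tfrac{c_{i+1}^2\delta_{i+1}}{L_{i+1}^2\sin\theta_{i+1}}\big)$, so that $h_i'=-\norm^o(\nu_{S_i})f_i/L_i$, we note that every denominator $L_j^2$ actually occurring in $f_i$ carries a factor $c_j^2$, hence appears only when $c_j\neq0$, where $L_j\ge\ell_j$; therefore $\sup_{[0,T^\dag)}|f_i|<\infty$ for every $i$. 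Consequently, if $\liminf_{t\nearrow T^\dag}\min_i L_i(t)=:\varepsilon>0$, then $L(t)$ stays in the compact set $\{\varepsilon/2\le L_i\le L\}\subset U$, so $h'$ is bounded, $h$ extends continuously to $h(T^\dag)\in U$, and \eqref{elastic_ode123} can be continued past $T^\dag$, contradicting maximality. Hence $\liminf_{t\nearrow T^\dag}\min_{1\le i\le n}L_i(t)=0$, and by the lower bound above every index realizing this liminf has $c_i=0$.

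\emph{Part (c): convergence at $T^\dag$.} It remains to prove that $h(t)$ converges as $t\nearrow T^\dag$; then the reconstruction above gives $\Gamma(t)\to\Gamma(T^\dag)$ in the Kuratowski/Hausdorff sense, so $t\mapsto\Gamma(t)$ is Kuratowski continuous on $[0,T^\dag]$, $h(T^\dag)$ is finite so (together with $\sum_i L_i\le L$) the whole trajectory lies in a fixed ball and $\bigcup_{[0,T^\dag]}\Gamma(t)$ is bounded, and for an index $i$ from the previous step $\lim_{t\nearrow T^\dag}L_i(t)=L_i(T^\dag)=0$ with $c_i=0$, i.e.\ \eqref{segment_disappear1}. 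For $i$ with $c_i\neq0$: $L_i\ge\ell_i$ and $|f_i|$ bounded give $|h_i'|\le C$, so $h_i$ is Lipschitz on $[0,T^\dag)$ and converges. For $i$ with $c_i=0$: then $S_{i-1}\parallel S_{i+1}$ and $\theta_i+\theta_{i+1}=2\pi$ (case (b) of \eqref{ashuc7bnn}), so $\cot\theta_i+\cot\theta_{i+1}=0$, $\sin\theta_{i+1}=-\sin\theta_i$, and \eqref{length_changed_hhh} reduces to $L_i(t)=L_i(0)+\tfrac{h_{i+1}(t)-h_{i-1}(t)}{\sin\theta_i}$, depending only on the neighbouring heights; one shows $h_i'\in L^1(0,T^\dag)$ -- equivalently, that a vanishing zero-curvature segment shrinks no faster than $(T^\dag-t)^{1/2}$ -- by a bootstrap along the maximal runs of consecutive $c=0$ segments (whose two ends are flanked by $c\neq0$ segments with lengths bounded below, so that the relevant forces $f_j$ are uniformly bounded), estimating $\tfrac{d}{dt}L_i^2$ through the reduced length formula and the bound on the $f_j$'s. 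This gives the required convergence.

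\emph{Main obstacle.} Parts (a), (b) and the bulk of (c) are ODE theory plus direct consequences of the energy-dissipation identity (Theorem \ref{teo:cl_elastc_gradflow}) and Corollary \ref{cor:segments_nonzero_curva}. The genuinely delicate point is the last step: promoting the subsequential statement $\liminf_t\min_i L_i=0$ to full convergence of the flow at $T^\dag$ (hence to Kuratowski continuity up to $T^\dag$ and to \eqref{segment_disappear1} with a true limit), which amounts to a quantitative lower bound on how slowly a zero-$\norm$-curvature segment can disappear and requires the chain bookkeeping indicated above, the coupling between neighbouring heights being the only source of difficulty.
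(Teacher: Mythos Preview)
Your treatment of (a), (b) and the first half of (c) (a~priori bounds, the escape-from-compacta argument, and the conclusion that the vanishing segment must have $c_i=0$) is correct and essentially the same as the paper's, only phrased via Picard--Lindel\"of rather than an explicit contraction map.

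The genuine gap is in your last step: you assert that \emph{all} $h_i$ converge at $T^\dag$, in particular for a single zero-curvature segment $S_i$ flanked by two nonzero-curvature neighbours ($c_{i-1},c_{i+1}\neq0$, $c_i=0$). Your sketched mechanism---``$L_i$ shrinks no faster than $(T^\dag-t)^{1/2}$, hence $h_i'\in L^1$''---cannot hold as stated. In this configuration your own reduced formula $L_i=L_i(0)+\tfrac{h_{i+1}-h_{i-1}}{\sin\theta_i}$ shows $L_i$ is Lipschitz (since $h_{i\pm1}$ are), so if $L_i(T^\dag)=0$ then $L_i(t)\le C(T^\dag-t)$, which is incompatible with a $(T^\dag-t)^{1/2}$ \emph{lower} bound. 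Your $\tfrac{d}{dt}L_i^2$ estimate only reproduces $|L_i'|\le C$, i.e.\ the same Lipschitz \emph{upper} bound, and gives no lower bound. With $|h_i'|\sim 1/L_i$ and $L_i\lesssim(T^\dag-t)$, $h_i'$ need not be integrable, and the paper explicitly allows $h_i$ to oscillate here.

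The paper does \emph{not} prove that all heights converge. Instead it splits the zero-curvature case into three subcases ($c_{i-1}=c_{i+1}=0$: $h_i\equiv0$; exactly one neighbour with $c\neq0$: a Cauchy trick plus the dissipation bound $\int|h_i'|^2L_i<\infty$ gives $h_i'\in L^1$; both neighbours $c\neq0$: possibly oscillating $h_i$). It then proves Kuratowski convergence of $\Gamma(t)$ \emph{directly}, building the limit curve $\Gamma^*$ only from those vertices $G_i^*$ whose defining heights do converge, and showing that the (possibly non-convergent) tiny segments in the oscillating subcase are squeezed onto the corresponding segment of $\Gamma^*$. That geometric argument---not convergence of all $h_i$---is what yields continuity up to $T^\dag$, boundedness of $\cup_t\Gamma(t)$ (obtained earlier from one Lipschitz height plus the total-length bound, not from finiteness of $h(T^\dag)$), and the full limit in \eqref{segment_disappear1}.
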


We expect that assertion (c) is valid also in case $\Gamma^0$ is unbounded; however, we do not study it here and leave it as a future work (see also Section \ref{sec:open_problems}). 

\begin{proof}
(a)-(b). We provide the full proof only in the case of closed curves; the assertions for unbounded curves 
can be done along the same lines. Let $\{\theta_i\}_{i=1}^n$ be the angles of $\Gamma^0$ and
let $\Delta_1$ be given by Lemma  \ref{lem:apriori_estocada}.
\smallskip

{\it Step 1: short-time existence and uniqueness.}
For $T>0,$ let 
\begin{equation}\label{zolotaya_toza1029}
\cK_T:=\{h:=(h_1,\ldots,h_n):\,\, h_i\in C[0,T],\,\, h_i(0)=0,\,\,\|h\|_\infty\le \Delta_1\}
\end{equation}
be the closed convex subset of the Banach space $(C[0,T])^n$ with the standard $L^\infty$-norm 
$$
\|h\|_\infty = \max_{1\le i\le n}\,\|h_i\|_\infty.
$$
For $h\in \cK_T$ set
$$
L_i[h](t):=\sH^1(S_i^0) - \Big( \frac{h_{i-1}(t)}{\sin\theta_i} + h_i(t)[\cot\theta_i + \cot\theta_{i+1}] +
\frac{h_{i+1}(t)}{\sin\theta_{i+1}}\Big),\quad t\in[0,T],
$$
and consider the operator
$
\cA =(A_1 ,\ldots,A_n )$ defined in $\cK_T$  as 
\begin{equation*}
A_i[h](t):=  \int_0^t \Big[-\tfrac{c_i\sH^1(F_i)}{L_i[h](s)} \\
- \tfrac{\alpha}{L_i[h](s)} \Big(\tfrac{ c_{i-1}^2\delta_{i-1} }{(L_{i-1}[h](s))^2
\sin\theta_i}+ \tfrac{c_{i}^2 \delta_i [\cot \theta_i+\cot
\theta_{i+1}]}{(L_i[h](s))^2}+\tfrac{c_{i+1}^2 \delta_{i+1} }{(L_{i+1}[h](s))^2
\sin\theta_{i+1}}\Big)\Big]\,\norm^o(\nu_{S_i^0})ds.
\end{equation*}
By the definition of $\cK_T,$ we have 
$$
L_i[h]\ge \frac{1}{2}\sH^1(S_i^0),\quad i=1,\ldots,n,\quad h\in \cK_T,
$$
and thus, 
$$
\|A_i[h]\|_\infty \le \Delta_2T,
$$
where $\Delta_2>0$ is given by Lemma \ref{lem:apriori_estocada} and depends only on $\alpha,$ $\norm,$ the angles
$\theta_i$ and reciprocals of the lengths of the segments of $\Gamma^0.$
Moreover, as $L_i[h]$ is linear in $h,$ we have also 
$$
\|A_i[h']-A_i[h'']\|_\infty \le \Delta_3 \|h'-h''\|_\infty T,
$$
where again $\Delta_3>0$ depends only on $\alpha,$ $W^\norm,$ the angles $\theta_i$ and reciprocals of the
lengths of the segments of $\Gamma^0.$ Thus, if we choose
$$
T := \min\Big\{\frac{\Delta_1}{\Delta_2},\frac{1}{2\Delta_3}\Big\},
$$
then $\cA:\cK_T\to \cK_T$ is a contraction. Theferfore it has a unique fixed point $h$ in $\cK_T.$ 
Using the equation $\cA[h]=h$ and a bootstrap argument, we immediately deduce that $h\in C^\infty[0,T].$
Moreover, as $h$ satisfies $\|h\|_\infty\le \Delta,$ by Lemma \ref{lem:construc_paral_curve} for any $t\in[0,T]$
there exists a unique polygonal closed curve $\Gamma(t)$ parallel to $\Gamma^0,$ such that
$h_i(t)=\scalarp{H(S_i^0,S_i(t))}{\nu_{S_i^0}}.$ By the definition of $\cA,$ the heights $h_i$ solve 
\eqref{elastic_ode123}. Hence, $\{\Gamma(t)\}_{t\in[0,T]}$ is a crystalline elastic flow starting from
$\Gamma^0.$
\smallskip

{\it Step 2: uniqueness.} Let us show that there is at most one crystalline elastic flow starting from an
admissible polygonal curve $\Gamma^0.$ Indeed, by contradiction, suppose that for some $T>0$ there are two flows
$\{\Gamma(t)\}_{t\in[0,T]}$ and $\{\Sigma(t)\}_{t\in[0,T]}$ starting from $\Gamma^0.$ Let $T'\in[0,T]$ be the
largest time for which $\Gamma(t)=\Sigma(t)$ for all $t\in[0,T']$ and
let $\{h_i\}$ and $\{b_i\}$ be the associated heights. Clearly, $h_i=b_i$ in $[0,T']$ for all $i=1,\ldots,n.$
Moreover, if $T'<T,$ by smoothness, there exists $\epsilon>0$ such that both
$h:=(h_1(T'+\cdot),\ldots,h_n(T'+\cdot))$ and $b:=(b_1(T'+\cdot),\ldots,b_n(T'+\cdot))$ belong  to
$\cK_\epsilon,$ see \eqref{zolotaya_toza1029} for definition. Thus, if we choose $\epsilon>0$ small enough, by
step 1, the operator $\cA$ will have a unique fixed point so that $h=b$ in $[0,\epsilon].$ This implies $h_i=b_i$
in $[0,T'+\epsilon]$ for all $i=1,\ldots,n,$ which contradicts the maximality of $T'.$
\smallskip

{\it Step 3: maximal existence time.} Let 
$$
T^\dag:=\sup\Big\{T>0:\,\,\text{there is a crystalline elastic flow $\{\Gamma(t)\}_{t\in[0,T]}$ starting from
$\Gamma^0$}\Big\}.
$$
By step 1, $T^\dag\ge \max\{\frac{\Delta_1}{\Delta_2},\frac{1}{2\Delta_3}\}.$ Moreover, if
$\{\Gamma'(t)\}_{t\in[0,T']}$ and $\{\Gamma''(t)\}_{t\in[0,T'']}$ are two flows starting from $\Gamma^0,$ then by
step 2, $\Gamma'(t)=\Gamma''(t)$ for any $0\le t\le \min\{T',T''\}.$ Therefore, there exists a
unique crystalline elastic flow $\{\Gamma(t)\}_{t\in[0,T^\dag)}$ starting from $\Gamma^0.$
\smallskip

{\it Step 4: semigroup property.} Since each $\Gamma(t)$ is parallel to $\Gamma^0,$ for any $t,s\ge0 $ with
$t+s\in[0,T^\dag)$ we have
$$
H(S_i^0,S_i(t+s)) = H(S_i^0,S_i(t)) +  H(S_i(t),S_i(t+s)),\quad i=1,\ldots,n. 
$$
This implies \eqref{semigruops}. 
\smallskip

(c) Now we study the behaviour of the flow at the maximal time  $T^\dag$ assuming that $\Gamma^0$ is bounded and
$T^\dag<+\infty.$  First suppose that there exists $\epsilon_0>0$ such that 
\begin{equation}\label{y7dufhh}
\min_{i=1,\ldots,n}\,\, \inf_{t\in[0,T^\dag)} \,\sH^1(S_i(t)) \ge \epsilon_0,
\end{equation}
and for sufficiently small $\epsilon\in(0,\epsilon_0),$ let the constants $\Delta_1^{\epsilon},$ $\Delta_2^{\epsilon}$ and $\Delta_3^{\epsilon}$ be given as in step 1 with $\Gamma(T^\dag-\epsilon)$ in place of $\Gamma^0.$ By definition, these numbers are in fact independent of $\epsilon,$ rather they depend  only on $\epsilon_0,$ $\theta_i$ and $W^\norm.$ Thus, $T_0:=\min\{\frac{\Delta_1^\epsilon}{\Delta_3^\epsilon},\frac{1}{\Delta_3^\epsilon}\}$ is independent of $\epsilon.$ In particular, we may choose $0<\epsilon<T_0/3.$ Then applying step 1 we can construct the crystalline elastic flow $\{\Gamma(t)\}_{t\in (T^\dag-\epsilon,T^\dag-\epsilon+T_0)}$ starting from
$\Gamma(T^\dag-\epsilon).$ Again by uniqueness, $\Gamma(\cdot)$ can be defined until $T^\dag-\epsilon+T_0>T^\dag,$
which contradicts the maximality of $T^\dag.$ 
This contradiction yields that there exists $i\in\{1,\ldots,n\}$ such that
\begin{equation}\label{p0of5}
\inf_{t\in[0,T^\dag)} \,\sH^1(S_i(t)) =0.
\end{equation}

Note that \eqref{p0of5} implies  $\liminf_{t\nearrow T^\dag} \sH^1(S_i(t))=0.$ Now we prove  $\limsup_{t\to T^\dag} \sH^1(S_i(t))=0,$ thereby proving \eqref{segment_disappear1}.
We first observe that by \eqref{length_bnd12} and \eqref{length_bnd13},
\begin{equation}\label{caputions}
+\infty>\frac{1}{c_\norm} \cF_\alpha(\Gamma^0) \ge \sum_{i=1}^n \sH^1(S_i(t)) = \sH^1(\Gamma(t))
\quad
\text{and} 
\quad
\sH^1(S_i(t)) \ge \frac{\alpha c_\norm c_i^2 \sH^1(F_i)^2}{\cF_\alpha(\Gamma^0)},\quad i=1,\ldots,n,
\end{equation}
for any $t\in[0,T^\dag).$ Let us investigate the behaviour of the signed heights $\{h_i\}$ from $\Gamma^0,$ solving the system of ODEs \eqref{elastic_ode123}. We need to rule out the behaviour of $h_i$ near $T^\dag$ depending on whether $c_i\ne 0$ or $c_i=0.$
\smallskip

{\it Case 1:} fix any $i\in\{1,\ldots,n\}$ with $c_i\ne0.$ By the evolution equation \eqref{elastic_ode123}, 
$$
|h_i'| \le \frac{C|c_i|}{\sH^1(S_i)} + \alpha \sum_{j\in\{i-1,i,i+1\}} \frac{C\,|c_j|}{\sH^1(S_i)\sH^1(S_j)^2}\quad\text{in $(0,T^\dag)$}
$$
for some constant $C:=C_{\norm,\theta}>0,$ where $\theta:=\{\theta_i\}$ are the set of angles. 
Now using the lower bound in \eqref{caputions} we get
\begin{equation}\label{qtwchghj}
|h_i'| \le \tilde C\quad \text{in $(0,T^\dag)$}
\end{equation}
for some  $\tilde C:=\tilde C_{\norm,\theta,\alpha,\cF_\alpha(\Gamma^0)}>0.$ Thus, $h_i\in C[0,T^\dag]\cap W^{1,+\infty}(0,T^\dag)\cap C^\infty(0,T^\dag).$

Note that the $L^\infty$-bound \eqref{qtwchghj} of $h_i'$ implies 
\begin{equation*}%\label{uyc76bbb}
|h_i(0)| - \tilde C T^\dag \le |h_i(T^\dag)| \le |h_i(0)| + \tilde C T^\dag.
\end{equation*}
Thus, $X:=\cup_{t\in[0,T^\dag)}\,S_i(t)$ is bounded. Moreover, by the first estimate of \eqref{caputions}, for $r:=\frac{2}{c_\norm}\cF_\alpha(\Gamma^0)$ we have
$$
\Gamma(t) \subset \bigcup_{x\in S_i(t)} B_{\sH^1(\Gamma(t))}(x) \subseteq \bigcup_{x\in X} B_r(x),\quad t\in [0,T^\dag).
$$
Thus, the union $\cup_{t\in[0,T^\dag)}\Gamma(t)$ is contained in the $\frac{2}{c_\norm}\cF_\alpha(\Gamma^0)$-neighborhood of $X,$ i.e., is bounded.
\smallskip

{\it Case 2:} consider any segment $S_i$ with $c_i=0.$ By the admissibility of $\Gamma(\cdot),$ the segments $S_{i-1}$ and $S_{i+1}$ are parallel and $\nu_{S_{i-1}} = \nu_{S_{i+1}}.$ In this case \eqref{elastic_ode123} takes form 
\begin{equation}\label{io0t5a}
h_i' = -\frac{\alpha \norm^o(\nu_{S_i^0})}{\sH^1(S_i)}\Big(\frac{c_{i-1}^2\delta_{i-1}}{\sH^1(S_{i-1})\sin\theta_i} +
\frac{c_{i+1}^2\delta_{i+1}}{\sH^1(S_{i+1})\sin\theta_{i+1}} \Big)\quad \text{in $[0,T^\dag).$}
\end{equation}
We distinguish the following cases.
\smallskip

{\it Case 2.1:} $c_{i-1} = c_{i+1}=0.$ In this case, $h_i\equiv0$ in $[0,T^\dag].$
\smallskip

{\it Case 2.2:} $c_{i-1} = 0 \ne c_{i+1}$ or $c_{i-1}\ne 0 = c_{i+1}.$ In this case, recalling the lower bound in \eqref{caputions} and the evolution equation \eqref{io0t5a}, we find 
$$
\frac{1}{\sH^1(S_i)} \le C''|h_i'|\quad \text{in $(0,T^\dag)$}
$$
for some constant $C'':= C_{\norm,\theta,\alpha,\cF_\alpha(\Gamma^0)}''>0.$ Thus, by the Cauchy inequality,
$$
|h_i'| \le  \frac{C''}{2}|h_i'|^2\sH^1(S_i) + \frac{1}{2C''\sH^1(S_i)} \le  \frac{C''}{2}|h_i'|^2\sH^1(S_i) + \frac{|h_i'|}{2}\quad\text{in $(0,T^\dag).$}
$$
Then for any $0<s<t<T^\dag,$
$$
|h_i(s) - h_i(t)| \le \int_s^t|h_i'|d\sigma \le C'' \int_s^t |h_i'|^2\sH^1(S_i)d\sigma.
$$
In view of \eqref{grad_floqasdf}, $|h_i'|^2\sH^1(S_i)\in L^1(0,T^\dag)$ and therefore, $h_i$ is bounded and absolutely continuous in $[0,T^\dag].$ 
\smallskip

{\it Case 2.3:} $c_{i-1},c_{i+1}\ne0.$  
As we observed in step 1,  $h_{i-1},h_{i+1} \in C[0,T^\dag].$ Consider the difference 
$$
H(S_{i-1},S_{i+1}) = h_{i+1} - h_{i-1} - H(S_{i-1}^0,S_{i+1}^0),
$$
where $H(S,T)$ is the distance vector from $S$ to $T.$ Clearly, $H(S_{i-1},S_{i+1}) \in C[0,T^\dag].$

\begin{itemize}
\item {\it Subcase 2.3.1:} if $|H(S_{i-1}(T^\dag),S_{i+1}(T^\dag))|>0,$ then by continuity and the regularity of the evolution, $|H(S_{i-1},S_{i+1})|\ge\epsilon>0$ in $(0,T^\dag).$  This means $\sH^1(S_i(t))\ge a_\epsilon > 0$ in $(0,T^\dag)$ for some $a_\epsilon>0$ and hence, as in step 1, from \eqref{io0t5a} it follows that $h_i\in C[0,T^\dag].$

\item {\it Subcase 2.3.2:} if $|H(S_{i-1}(T^\dag),S_{i+1}(T^\dag))|=0,$ then $|H(S_{i-1}(t),S_{i+1}(t))|\to0$ and $\sH^1(t)\to0$ as $t\nearrow T^\dag.$  Thus, every Kuratowski-limit of segments $S_{i-1}(t)$ and $S_{i+1}(t)$ is contained in a single straight line $L$.  In particular, $S_i(t)$ K-converges to a closed connected subset of $L;$ here possible oscillations of $h_i$ near $T^\dag$ may prevent the limit to be a singleton. 
\end{itemize}

These observations imply \eqref{segment_disappear1}. Indeed, if subcase 2.3.2 holds with some segment $S_i$ with $c_i=0$ and $c_{i-1},c_{i+1}\ne0$ at the maximal time $T^\dag$, then $\sH^1(S_i(t))\to0$ as $t\to T^\dag.$ On the other hand, if any segment $S_i$ with $c_i=0$ and $c_{i-1},c_{i+1}\ne0$ satisfy  subcase 2.3.1, then all $h_j$ are continuous up to $T^\dag.$ Moreover, by \eqref{length_changed_hhh}
$$
\sH^1(S_j) = \sH^1(S_j^0) - \Big(\frac{h_{j-1}}{\sin\theta_j} + h_j[\cot \theta_j + \cot\theta_{j+1}] + \frac{h_{j+1}}{\sin\theta_{j+1}}\Big)\in C[0,T^\dag].
$$
Now if $\min_j \sH^1(S_j(T^\dag))>0,$ then by continuity and regularity of the evolution, we would get  \eqref{y7dufhh} for some $\epsilon>0,$ which again would contradict the maximality of $T^\dag.$ This contradiction implies $\sH^1(S_{\bar j}(t))\to0$ as $t\nearrow T^\dag$ for some $\bar j.$ Since by \eqref{caputions} $\sH^1(S_i(T^\dag))>0$ for any segment $S_i$ with $c_i\ne0,$ necessarily $c_{\bar j}=0$ and \eqref{segment_disappear1} follows.

Finally, we prove that $t\mapsto \Gamma(t)$ is Kuratowski-continuous in $[0,T^\dag].$ Its continuity in $[0,T^\dag)$ follows from the regularity of the evolution. Let us show that as $t\nearrow T^\dag$ the curves $\Gamma(t)$ K-converge to a unique admissible curve $\Gamma^*,$ which we denote by $\Gamma(T^\dag).$ Indeed, let us write  $S_i(t)=[G_i(t),G_{i+1}(t)]$ for any $i$ and study the limit points of the vertices $G_i(t)$ as $t\nearrow T^\dag.$  

\begin{itemize}
\item[(a)] Let $i$ be such that both $h_i,h_{i-1}\in C[0,T^\dag].$ Then, as $t\nearrow T^\dag,$ the straight lines $L_i(t)$ and $L_{i-1}(t)$ containing $S_i(t)$ and $S_{i-1}(t),$ respectively, Kuratowski converge to the intersecting straight lines $L_i^*$ and $L_{i-1}^*.$ In this case, $G_i(t),$ which is the intersection of $L_i(t)$ and $L_{i-1}(t),$ converges to the unique intersection point $G_i^*$ of $L_i^*$ and $L_{i-1}^*.$

\item[(b)] Let $i$ be such that either $i-1$ or $i$ is as in subcase 2.3.2, where we could not claim the continuity of corresponding signed height at $T^\dag.$ If, for instance,
$h_{i-1}$ is oscillating, i.e., $c_{i-2},c_{i}\ne 0=c_{i-1},$ and $|H(S_{i-2},S_{i})|\to0$ and $\sH^1(S_{i-1})\to0$ as $t\nearrow T^\dag,$ then $G_i(t)$ Kuratowski converges to a closed connected subset $X_i$, contained in all Kuratowski limits of $S_{i-2}$ and $S_i,$ which lie on the same straight line, denoted as $L_i^*=L_{i-2}^*.$  The case of oscillating $h_i$ is analogous. In either situation we do not define $G_i^*.$
\end{itemize}

Now let $\{i_1,\ldots,i_m\}$ be all vertices $i$ at which $G_{i}^*$ is defined and consider the closed polygonal curve $\Gamma^*:=\cl{G_{i_1}^*G_{i_2}^*\ldots G_{i_m}^*G_{i_1}^*}.$ In view of (a), $[G_i^*G_{i+1}^*]$ can be a degenerate segment, and if the vertex $i$ of $\Gamma^0$ lies between the vertices $i_j$ and $i_{j+1},$ then by (b), the Kuratowski limit $X_i$ of $G_i(t)$ satisfies 
\begin{equation}\label{us78vbnn}
X_i\subset [G_{i_j}^*G_{i_{j+1}}^*].
\end{equation}
Let us check that 
\begin{equation}\label{tzasdfv4}
\Gamma^* = K \text{-} \lim_{t \nearrow T^\dag} \Gamma(t).
\end{equation}
Indeed, let $t_k\nearrow T^\dag$ and $x_k\in \Gamma(t_k)$ be such that $x_k\to x\in\R^2.$  Possibly passing to a subsequence, we may assume that there exists $i$ such that $x_k\in S_i(t_k)=[G_i(t_k),G_{i+1}(t_k)]$ for all $k.$ 
\begin{itemize}
\item If $i=i_j$ and $i+1=i_{j+1}$ for some $j,$ then $S_{i}(t_k) \overset{K}{\to} [G_{i_j}^*G_{i_{j+1}}^*]$ and hence, $x\in \Gamma^*.$ 

\item If both $G_i^*$ and $G_{i+1}^*$ are not defined, then by (b), both $X_i$ and $X_{i+1}$ belong to the same straight line $L_i^*=L_{i+1}^*.$ Thus, $S_i(t_k)$ Kuratowski converges to a subset of the  closed convex hull of $X_i$ and $X_{i+1},$ which is a segment of $L_i^*.$ Then  \eqref{us78vbnn} implies $x\in \Gamma^*.$ 

\item If $G_i^*$ is not defined and $i+1=i_j$ for some $j,$ then $S_i(t_k)$ Kuratowski converges to a subset of the  closed convex hull of $X_i$ and $G_{i_j}^*,$ which lies on the same straight line containing $X_i$ and $G_{i_j}^*.$ By \eqref{us78vbnn}, again $x\in\Gamma^*.$ 

\item Finally, if $G_{i+1}^*$ is not defined and $i=i_j$ for some $j,$ the conclusion $x\in \Gamma^*$ is as above. 
\end{itemize}
These observations yield that $\Gamma^*$ contains the Kuratowski upper limit of $\Gamma(t)$ as $t\nearrow T^\dag.$ To show that $\Gamma^*$ is the Kuratowski lower limit of $\Gamma(\cdot),$ we consider an arbitrary sequence $t_k\nearrow T^\dag$ and fix any $x\in \Gamma^*.$ We claim that there exists $x_k\in \Gamma(t_k)$ such that $x_k\to x.$ Indeed, if $x=G_{i_j}^*$ for some $j,$ then by (a), $G_{i_j}(t_k)\to x=G_{i_j}^*.$
Thus, we may assume that $x$ belongs to the relative interior of some segment $[\Gamma_{i_j}^*G_{i_{j+1}}^*]$ of $\Gamma^*.$ We observe that the union $\cup_{i=i_j}^{i_{j+1}-1}S_i(t_k)$ Kuratowski converges to $[\Gamma_{i_j}^*G_{i_{j+1}}^*].$ Indeed, $G_i^* $ is not defined for any $i_j<i<i_{j+1}-1$ and any $S_i(t_k)$ is either parallel to $[\Gamma_{i_j}^*G_{i_{j+1}}^*]$ or its length converges to $0.$ Thus, recalling \eqref{us78vbnn}, we deduce the required convergence. This convergence implies that $\Gamma^*$ is the Kuratowski lower limit and \eqref{tzasdfv4} follows. 

Finally, we claim that $\Gamma^*$ is admissible. Indeed, let $\nu_j$ be the unit normal of $[G_{i_j}^*G_{i_{j+1}}^*].$ By definition and \eqref{us78vbnn}, for any index $i_j<i<i_{j+1}-1,$ the segment $S_i$ is either parallel to $[G_{i_j}^*G_{i_{j+1}}^*] $ so that $\nu_{S_i^0}=\nu_j$ or has $c_i=0$ and its length converges to $0$ (in particular, neighboring segments have unit normal $\nu_j$). Thus, recalling the admissibility of $\Gamma(\cdot),$ we conclude that the normals $\nu_{j-1}$ and $\nu_j$ of two consecutive segments $[G_{i_{j-1}}^*G_{i_j}^*]$ and $[G_{i_{j}}^*G_{i_{j+1}}^*]$ must be adjacent outer normals to  $W^\norm.$ This implies $\Gamma^*$ is admissible. Clearly, $\Gamma^*$ is not parallel to $\Gamma^0.$
\end{proof}

\subsection{Restart of the flow}

Suppose  $\norm$ is a crystalline anisotropy, $\Gamma^0:=\cup_{i=1}^nS_i^0$ is a closed admissible polygonal curve and $\{\Gamma(t)\}_{t\in[0,T^\dag)}$ is the maximal crystalline elastic
flow starting from $\Gamma^0.$ By Theorem \ref{teo:existence} (c), if $T^\dag$ is finite, then at the maximal time only some segments with zero $\norm$-curvature disappear and 
the limiting curve $\Gamma(T^\dag)$ (defined in the Kuratowski sense) is well-defined and admissible. Thus, relabelling
the segments/half-lines of $\Gamma(T^\dag),$ we can continue the flow, applying Theorems \ref{teo:existence} to
reach another maximal time $T^\ddag.$

This observation yields the following

\begin{theorem}[Restart of the flow]\label{teo:continue_flows}
Let $\norm$ be a crystalline anisotropy and $\Gamma^0$ be a closed admissible polygonal curve consisting of $n$ segments. Then there exists a unique family $\{\Gamma(t)\}_{t\in[0,+\infty)}$ of
admissible polygonal curves and $0\le m<n$-times $0 = T_0^\dag < T_1^\dag < \ldots< T_m^\dag<T_{m+1}^\dag =
+\infty$ such that

\begin{itemize}
\item[(a)] the map $t\mapsto \Gamma(t)$ is Kuratowski continuous in $[0,+\infty)$;

\item[(b)] for any $i\in\{0,\ldots,m\}$ the family $\{\Gamma(t):\,\,t\in[T_i^\dag,T_{i+1}^\dag)\}$ is the unique
maximal elastic flow starting from $\Gamma(T_i^\dag);$

\item[(c)] for any $i\in\{0,\ldots,m-1\}$ as $t\nearrow T_{i+1}^\dag$ some segments of $\Gamma(t)$ with zero $\norm$-curvature disappear;

\item[(d)] for any $i\in\{0,\ldots,m-1\}$ as $t\nearrow T_{i+1}^\dag$ the length of each segment of $\Gamma(t)$ with nonzero $\norm$-curvature is bounded away from $0;$

\item[(e)] for any $t\ge0$ the index (in the sense of Definition \ref{def:curve_index}) of $\Gamma(t)$ is equal to the index of $\Gamma^0.$ 
\end{itemize}
\end{theorem}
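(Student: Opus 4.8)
The plan is to build the global family by iterating Theorem~\ref{teo:existence} and to use its part~(c) to bound the number of restarts. First I would apply Theorem~\ref{teo:existence} to $\Gamma^0$, getting a maximal time $T_1^\dag\in(0,+\infty]$ (the $T^\dag$ of that theorem) and the unique maximal flow $\{\Gamma(t)\}_{t\in[0,T_1^\dag)}$. If $T_1^\dag=+\infty$ I take $m=0$ and stop. Otherwise, since $\Gamma^0$ is closed, hence bounded, Theorem~\ref{teo:existence}(c) gives that $t\mapsto\Gamma(t)$ is Kuratowski continuous up to $t=T_1^\dag$, that $\Gamma(T_1^\dag):=\lim_{t\nearrow T_1^\dag}\Gamma(t)$ is a bounded admissible closed polygonal curve, that $\sH^1(S_i(t))\to 0$ for at least one segment with $c_i=0$, and that every segment with nonzero $\norm$-curvature keeps its length bounded away from $0$. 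Discarding the finitely many degenerate segments of $\Gamma(T_1^\dag)$ leaves a closed admissible polygonal curve whose number $n_1$ of (nondegenerate) segments satisfies $n_1<n$, since at least one of the original $n$ segments has disappeared. I would then repeat the construction from $\Gamma(T_1^\dag)$, and inductively from each $\Gamma(T_k^\dag)$.

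The key consequence is that $n_0=n>n_1>n_2>\dots$ is a strictly decreasing sequence of positive integers as long as the maximal times are finite, so it has at most $n$ terms; hence there is a first index $m\le n-1<n$ with $T_{m+1}^\dag=+\infty$, which gives the times $0=T_0^\dag<\dots<T_m^\dag<T_{m+1}^\dag=+\infty$. Uniqueness of the family $\{\Gamma(t)\}_{t\ge 0}$ and of the $T_k^\dag$ then follows by induction on $k$: each piece $\{\Gamma(t):t\in[T_k^\dag,T_{k+1}^\dag)\}$ is the unique maximal flow from $\Gamma(T_k^\dag)$ by Theorem~\ref{teo:existence}(a), and $\Gamma(T_k^\dag)$ is itself uniquely determined as the Kuratowski limit of the previous piece; this is (b). For (a) I would note that on each $[T_k^\dag,T_{k+1}^\dag)$ the signed heights are $C^\infty$ (Definition~\ref{def:cryst_elas_flow} and the remark after it), so $t\mapsto\Gamma(t)$ is Kuratowski continuous there; for $k<m$, Theorem~\ref{teo:existence}(c) extends this continuity to the closed interval $[T_k^\dag,T_{k+1}^\dag]$, and $\Gamma(T_{k+1}^\dag)$ is by construction the initial datum of the next piece, so the glued map is Kuratowski continuous on $[0,+\infty)$. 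Statements (c) and (d) are exactly Theorem~\ref{teo:existence}(c) read on each interval with $k<m$.

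It remains to prove (e). On $[T_k^\dag,T_{k+1}^\dag)$ all curves $\Gamma(t)$ are parallel, hence share the same ordered list of unit normals, so a Cahn--Hoffman field traces the same closed path on $\p W^\norm$ for each of them; thus the index is constant on the interval. Across a finite restart time $T_{k+1}^\dag$ ($k<m$), only segments $S_i$ with $c_i=0$ disappear, and for each such $S_i$ (not a half-line, as $\Gamma(t)$ is closed) the discussion around \eqref{ashuc7bnn} gives $\nu_{S_{i-1}}=\nu_{S_{i+1}}$ and $\theta_i+\theta_{i+1}=2\pi$; hence the turning of $\Gamma(t)$ along the arc $S_{i-1}\cup S_i\cup S_{i+1}$, namely $(\pi-\theta_i)+(\pi-\theta_{i+1})=0$, is null, while by subcase~2.3.2 in the proof of Theorem~\ref{teo:existence}(c), $S_{i-1}$ and $S_{i+1}$ become collinear in the limit and merge into a single segment with the same normal. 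Since the total turning is additive over the arcs and each vanishing segment contributes zero, $\Gamma(T_{k+1}^\dag)$ has the same total turning, hence the same index, as $\lim_{t\nearrow T_{k+1}^\dag}\Gamma(t)$. Induction on $k$ then yields that the index of $\Gamma(t)$ equals that of $\Gamma^0$ for every $t\ge 0$.

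The only genuinely substantial point is the termination of the iteration, i.e.\ that $T_{m+1}^\dag=+\infty$ after at most $n-1$ restarts. This rests entirely on Theorem~\ref{teo:existence}(c), which guarantees that each finite maximal time is reached only through the disappearance of one or more segments, all of them of zero $\norm$-curvature, leaving behind an admissible (closed) curve with strictly fewer segments from which the flow can be restarted. The other place requiring care is the bookkeeping for (e): checking that the vanishing zero-$\norm$-curvature segments carry zero net turning and that their neighbours merge collinearly, so that the index cannot jump at a restart.
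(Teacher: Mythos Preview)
Your proposal is correct and follows essentially the same approach as the paper: iterate Theorem~\ref{teo:existence}, use part~(c) to guarantee that each finite maximal time removes at least one zero-$\norm$-curvature segment so the process terminates in fewer than $n$ steps, and glue the pieces using Kuratowski continuity. Your treatment of (e) is considerably more detailed than the paper's one-line justification (``after vanishing some zero $\norm$-curvature segments in a loop, the loop persists''); your turning-number computation via $\theta_i+\theta_{i+1}=2\pi$ is the right way to make that precise, though note that the collinearity of $S_{i-1}$ and $S_{i+1}$ in the limit follows for \emph{any} vanishing $c_i=0$ segment (not only those in subcase~2.3.2), simply because $\nu_{S_{i-1}}=\nu_{S_{i+1}}$ forces the two parallel lines to coincide once the connecting segment $S_i$ has zero length.
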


\begin{proof}
Applying Theorem \ref{teo:existence} inductively we find $0 = T_0^\dag < T_1^\dag < \ldots<
T_m^\dag<T_{m+1}^\dag = +\infty$ with $0\le m<n$ and for any $i\in\{0,\ldots,m\}$ we find the unique maximal elastic flow
$\{\Gamma(t):\,\,t\in[T_i^\dag,T_{i+1}^\dag)\}$ starting from $\Gamma(T_i^\dag).$
Moreover, by \eqref{segment_disappear1}, for any $i\in\{0,\ldots,m-1\}$ as $t\nearrow T_{i+1}^\dag$ only some segment (s) 
of $\Gamma(t)$ with zero $\norm$-curvature disappear and (c)-(d) hold. Finally, the Kuratowski continuity of
$\Gamma(\cdot)$ in $[T_i^\dag,T_{i+1}^\dag]$ follows from Theorem \ref{teo:existence} (c). 
To prove (e), it suffices to note that  after vanishing some zero $\norm$-curvature segments in a loop, the loop persists.
\end{proof}

Theorem \ref{teo:continue_flows} implies that the crystalline elastic evolution starting from an admissible
$\phi$-regular polygonal curve is uniquely defined for all times.

\section{Examples}\label{sec:example}

In this section we consider some examples. The first example is very similar to what happens when $\norm$ is Euclidean.

\begin{example}[Evolution of Wulff shapes]\label{ex:evol_Wulff}
Let us check that crystalline Wulff shapes evolve self-similarly.
Let $\norm$ be a crystalline anisotropy and let $\{W_{R(t)}^{\norm}\}_{t\in[0,T)}$ be a family of Wulff shapes centered at origin $O$ of radius $R(t)>0$ for all $t.$ We assume that orientation of the boundaries of all $W_{R(t)}^\norm$ are chosen such that the  corresponding normals are outward. 
For any segment $S_i(t)$ of $W_{R(t)}^\norm,$ let $h_i(t):=\scalarp{H(S_i(0)),S_i(t)}{\nu_{S_i(0)}};$ one can readily check that 
\begin{equation}\label{ptzcv}
h_i(t) = (R(t)- R(0)) \norm^o(\nu_{S_i(0)}).
\end{equation}
Being $W_{R(t)}^\norm$ convex, all transition coefficients are equal to $1.$  Moreover, by homothety, $\sH^1(S_i(t)) = R(t)\sH^1(F_i)$ and recalling \eqref{ashuc7bnn}, the right-hand side of \eqref{elastic_ode123} takes the  form
$$
-\frac{1}{R(t)} + \frac{\alpha}{R(t)^3}.
$$
Thus, if we assume the boundaries of the  Wulff shapes $\{W_{R(t)}^{\norm}\}_{t\in[0,T)}$ is an elastic flow, by \eqref{ptzcv}, we should have 
$$
R' = -\frac{1}{R} + \frac{\alpha}{R^3}\quad\text{in $(0,T).$}
$$
Hence, if $R$ is the unique solution of the ODE 
\begin{equation*}
\begin{cases}
R' = -\frac{1}{R} + \frac{\alpha}{R^3} & \text{in $(0,T^\dag)$},\\
R(0)=R_0,
\end{cases}
\end{equation*}
which is an equivalent formulation of \eqref{elastic_ode123} for the evolution of Wulff shapes, by Theorem
\ref{teo:existence}, $\p W_{R(\cdot)}^\norm$ is the unique evolution starting from $\p W_{R_0}^\norm.$ We can also provide some qualitative properties of the flow. First of all, $T^\dag=+\infty.$ Moreover:

\begin{itemize}
\item[(a)] if $R_0^2=\alpha,$ the evolution is stationary, i.e. $R(\cdot)\equiv R_0$ in
$[0,+\infty);$

\item[(b)] if $R_0^2>\alpha,$  one can readily check that $R'<0$ and hence the evolution is
self-shrinking and $R$ strictly decreases until $\sqrt{\alpha}$ as $t\to+\infty;$

\item[(c)] if  $R_0^2<\alpha,$ $R'>0$ and hence  one can show that the evolution is
self-expanding and $R$ strictly increases until $\sqrt{\alpha}$ as $t\to+\infty.$
\end{itemize}
\end{example}

\begin{wrapfigure}[6]{r}{0.4\textwidth}
\vspace*{-9mm}
\includegraphics[width=0.38\textwidth]{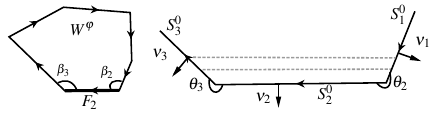}
\caption{\small The Wulff shape and the admissible polygonal curve in Example \ref{ex:translate}.}\label{fig:unbound}
\end{wrapfigure}
The observations in (b) and (c) show  that as $t\to+\infty$ the Wulff shapes $W_{R(t)}^\norm $ converge to a stationary Wulff shape. It is worth to mention that such a long-time behaviour is not specific to Wulff shapes and holds true for the globally defined evolution starting from an arbitrary admissible closed polygonal curve (see Section \ref{sec:long_time_behaviour} below).

\begin{example}[Translating-type solutions]\label{ex:translate}
Let $\norm$ be a crystalline anisotropy and let $F_2$ be a  facet of $W^\norm$ at which the sum of the interior
angles $\beta_2\in(0,\pi)$ and $\beta_3\in(0,\pi)$ of $W^\norm$ is not less than $\pi$ (see Fig.
\ref{fig:unbound}). Consider an admissible unbounded curve $\Gamma^0=S_1^0\cup S_2^0\cup S_3^0,$ as in Fig.
\ref{fig:unbound}, where $S_1^0$ and $S_3^0$ are half-lines. For shortness, let $\nu_i$ stands for the outer normal 
$S_i^0.$
As the only ``evolving'' segment is $S_2^0$ and $\Gamma^0$ is convex near $S_2^0,$ we have $c_2=1.$ Thus, the
crystalline elastic evolution equation \eqref{elastic_ode123} becomes $h_1=h_3\equiv0$ and
\begin{equation}\label{aiszfv}
\frac{h_2'}{\norm^o(S_2^0)} = -\frac{\sH^1(F_2)}{\sH^1(S_2)}- \frac{\alpha \sH^1(F_2)^2 \norm^o(\nu_2)[\cot\theta_2 +
\cot\theta_3]}{\sH^1(S_2)^3}.
\end{equation}
By admissibility, $\theta_2=2\pi-\beta_2$ and $\theta_3=2\pi-\beta_3.$ Thus, 
$$
\cot\theta_2 + \cot\theta_3 = -\cot(\beta_2)-\cot\beta_3 = \frac{\sin(2\pi-
\beta_2-\beta_3)}{\sin\beta_2\sin\beta_3}.
$$
Since $\beta_2+\beta_3\in[\pi,2\pi),$ we have $\cot\theta_2 + \cot\theta_3 >0,$ and hence, from \eqref{aiszfv} we
deduce $h_2'<0.$ Since $h_2(0)=0,$ this implies $h_2$ is decreasing and negative, which in turn yields that the
segment $S_2^0$ during the evolution translates to  infinity in the direction of $-\nu_2.$
In other words, the unique crystalline elastic flow $\Gamma(t)$ exists for all times $t>0,$ and it ``translates''
to  infinity.

This translation is more clear if $\beta_2+\beta_3 = \pi.$ In this case, the facets $F_1$ and $F_3$ of $W^\norm$ becomes parallel and hence so are $S_1^0$ and $S_3^0.$ Such curves are the analogue of the so-called translating solutions (see Definition \ref{def:translate_curves} below) or sometimes grim reapers.
\end{example}

\begin{wrapfigure}[12]{l}{0.32\textwidth}
\vspace*{-3mm}
\includegraphics[width=0.3\textwidth]{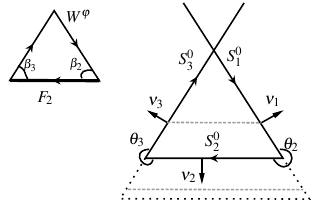}
\caption{\small A triangular Wulff shape and an admissible polygonal curve in Example \ref{ex:triangle}.}\label{fig:triang}
\end{wrapfigure}

Note that the curves in Example ``translates'' to infinity so that as time converges to infinity they disappear. The following example shows that if $\beta_2+\beta_3<\pi$ in Example \ref{ex:translate} so that  the half-lines intersect, the evolution is globally defined and converges to some stationar curve,  i.e., we obtain a similar situation as in the case of evolving Wulff shapes.

\begin{example}\label{ex:triangle}
Let $\norm$ be a crystalline anisotropy  and $F_2$ be a  facet of $W^\norm$ at which the sum of the interior angles $\beta_2,\beta_3\in(0,\pi)$ of $W^\norm$ is less than $\pi$ (see Fig. \ref{fig:triang}). 
Consider an admissible unbounded curve $\Gamma^0=S_1^0\cup S_2^0\cup S_3^0,$ as in Fig.
\ref{fig:triang}, where $S_1^0$ and $S_3^0$ are half-lines, which intersect at a single point. As in Example \ref{ex:translate},  $c_1=c_3=0$ and $c_2=1,$ and thus, $h_1=h_3\equiv0$ and
$h_2$ is a solution of \eqref{aiszfv}.  
By admissibility, $\theta_2=2\pi-\beta_2$ and $\theta_3=2\pi-\beta_3,$ and so $\cot\theta_2 + \cot\theta_3<0.$ Now:
\begin{itemize}
\item if $\sH^1(S_i^0) = \sqrt{-\alpha \sH^1(F_2)\norm^o(\nu_2)[\cot\theta_2+\cot\theta_3]},$ then $\Gamma^0$ is stationary, i.e., the constant family $\Gamma(\cdot)\equiv \Gamma^0$ is the unique solution of \eqref{elastic_ode123}.

\item if $\sH^1(S_i^0)\ne \sqrt{-\alpha \sH^1(F_2)\norm^o(\nu_2)[\cot\theta_2+\cot\theta_3]},$ then $T^\dag=+\infty$ and 
$$
\lim\limits_{t\to+\infty} 
\sH^1(S_i(t)) =  \sqrt{-\alpha \sH^1(F_2)\norm^o(\nu_2)[\cot\theta_2+\cot\theta_3]},
$$
that is, the flow $\{\Gamma(t)\}$ converges to a stationary solution as $t\to+\infty.$
\end{itemize}
\end{example}

\section{Stationary solutions}\label{sec:stationar_solutino}

Inspired from Examples \ref{ex:evol_Wulff} and \ref{ex:triangle}, in this section we introduce the notion of a stationary curve.

\begin{definition}[Stationary curves]\label{def:reg_stationar}
Let $\norm$ be a crystalline anisotropy. An admissible polygonal curve $\Gamma=\cup_i S_i$ is called
\emph{stationary} provided that
\begin{equation}\label{stationarS1}
c_i\sH^1(F_i)+ \alpha \Big(\frac{c_{i-1}^2\delta_{i-1}}{\sH^1(S_{i-1})^2\sin\theta_i}+ \frac{c_{i}^2\delta_i[\cot
\theta_i+\cot \theta_{i+1}]}{\sH^1(S_{i})^2}+\frac{c_{i+1}^2\delta_{i+1}}{\sH^1(S_{i+1})^2\sin\theta_{i+1}}\Big)
= 0
\end{equation}
for all segments $S_i,$ where $\delta_j>0$ are defined in \eqref{def:delta_is}.
\end{definition}

Clearly, if $\Gamma^0$ is stationary, $\Gamma(t):=\Gamma^0$ is the crystalline elastic flow starting from
$\Gamma^0.$ Therefore, we sometimes call $\Gamma^0$ a stationary solution.

\begin{example}
If $W^\norm$ is the square $[-1,1]^2,$ then $\sH^1(F_i)=2,$ $\norm^o(\nu)=1$ for the normal $\nu$ of any admissible polygonal curve, which can only lie in $\{\pm\be_1,\pm\be_2\},$ with angles $\theta_i\in\{\pi/2,3\pi/2\}.$ Thus, the stationarity equation \eqref{stationarS1} simplifies to
\begin{equation}\label{hazsut}
c_i + \frac{2\alpha c_{i-1}^2}{\sH^1(S_{i-1})^2\sin\theta_{i-1}} + \frac{2\alpha
c_{i+1}^2}{\sH^1(S_{i+1})^2\sin\theta_{i+1}}=0
\end{equation}
for any segment $S_i.$  
\end{example}

An important property of stationary solutions is related to their elastic energy.

\begin{proposition}%\label{prop:stationer_local_mini}
Let $\norm$ be a crystalline anisotropy and $\Gamma:=\cup_{i=1}^n S_i$ be an admissible stationary polygonal
curve.

\begin{itemize}
\item Assume that $\Gamma$ is closed. Then for any admissible polygonal curve $\bar\Gamma:=\cup_{i=1}^n\bar
S_i,$ parallel to $\Gamma,$
\begin{equation}\label{diff_energy_bdd_stat}
\cF_\alpha(\bar \Gamma) - \cF_\alpha(\Gamma) = \alpha\sum_{i=1}^{n} \frac{ c_i^2 \delta_i}{\sH^1(S_i)^2 \sH^1(\bar
S_i)} \,\Big(\sH^1(\bar S_i) - \sH^1(S_i)\Big)^2.
\end{equation}

\item Assume that $\Gamma$ is unbounded and $S_1$ and $S_n$ are half-lines. Then for any admissible polygonal
curve $\bar\Gamma:=\cup_{i=1}^n\bar S_i,$ parallel to $\Gamma$ and for any  bounded open set $D\subset\R^2$ compactly
containing all segments of $\Gamma$ and $\bar\Gamma,$
\begin{equation}\label{diff_energy_unbdd_stat}
\cF_\alpha(\bar \Gamma,D) - \cF_\alpha(\Gamma,D) = \alpha \sum_{i=2}^{n-1} \frac{ c_i^2 \delta_i}{\sH^1(S_i)^2
\sH^1(\bar S_i)} \,\Big(\sH^1(\bar S_i) - \sH^1(S_i)\Big)^2.
\end{equation}
\end{itemize}
Thus, any admissible stationary polygonal curve $\Gamma$ is a local minimizer of $\cF_\alpha$ among all
polygonal curves   parallel to $\Gamma.$ 
Moreover, if $\Gamma$ is closed, then it is a minimizer of $\cF_\alpha$ among all curves parallel to $\Gamma.$ 
\end{proposition}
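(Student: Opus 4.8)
The plan is to compute $\cF_\alpha(\bar\Gamma)-\cF_\alpha(\Gamma)$ directly, starting from the explicit expression of the elastic energy on an admissible polygonal curve used in the proof of Theorem~\ref{teo:cl_elastc_gradflow}, namely $\cF_\alpha(\Gamma)=\sum_i\norm^o(\nu_{S_i})\big(\sH^1(S_i)+\alpha c_i^2\sH^1(F_i)^2/\sH^1(S_i)\big)$, and then to show that every first-order term in the signed heights from $\Gamma$ to $\bar\Gamma$ is annihilated by stationarity, leaving exactly the nonnegative quadratic remainder in the statement. Abbreviate $d_i:=\sH^1(\bar S_i)-\sH^1(S_i)$ and let $\{h_i\}$ be the signed heights from $S_i$ to $\bar S_i$. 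By parallelness, $\nu_{S_i}=\nu_{\bar S_i}$, the angles $\theta_i$ are common to the two curves, and $F_i,c_i,\delta_i$ coincide for $\Gamma$ and $\bar\Gamma$; hence
\[
\cF_\alpha(\bar\Gamma)-\cF_\alpha(\Gamma)=\sum_i\norm^o(\nu_{S_i})\,d_i+\alpha\sum_i c_i^2\delta_i\Big(\tfrac{1}{\sH^1(\bar S_i)}-\tfrac{1}{\sH^1(S_i)}\Big).
\]

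For the anisotropic-length term I would insert the length-variation formula \eqref{length_changed_hhh}, $d_i=-\big(\tfrac{h_{i-1}}{\sin\theta_i}+h_i[\cot\theta_i+\cot\theta_{i+1}]+\tfrac{h_{i+1}}{\sin\theta_{i+1}}\big)$, and reindex the three resulting sums exactly as done in the proof of Theorem~\ref{teo:cl_elastc_gradflow}: the coefficient of $h_i$ then becomes $\tfrac{\norm^o(\nu_{S_{i-1}})}{\sin\theta_i}+\norm^o(\nu_{S_i})[\cot\theta_i+\cot\theta_{i+1}]+\tfrac{\norm^o(\nu_{S_{i+1}})}{\sin\theta_{i+1}}$, which by the crystalline identity \eqref{ashuc7bnn} equals $-c_i\sH^1(F_i)$. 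Therefore $\sum_i\norm^o(\nu_{S_i})\,d_i=\sum_i c_i\sH^1(F_i)\,h_i$.

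For the curvature term I would use the elementary identity $\tfrac{1}{\sH^1(\bar S_i)}-\tfrac{1}{\sH^1(S_i)}=-\tfrac{d_i}{\sH^1(S_i)^2}+\tfrac{d_i^2}{\sH^1(S_i)^2\sH^1(\bar S_i)}$, valid because $d_i-\sH^1(\bar S_i)=-\sH^1(S_i)$. Its quadratic piece contributes precisely $\alpha\sum_i\tfrac{c_i^2\delta_i}{\sH^1(S_i)^2\sH^1(\bar S_i)}\big(\sH^1(\bar S_i)-\sH^1(S_i)\big)^2$, the claimed right-hand side. For the linear piece $-\alpha\sum_i c_i^2\delta_i d_i/\sH^1(S_i)^2$ I would substitute \eqref{length_changed_hhh} once more and reindex again, obtaining as the coefficient of each $h_i$ the quantity $\alpha\big(\tfrac{c_{i-1}^2\delta_{i-1}}{\sH^1(S_{i-1})^2\sin\theta_i}+\tfrac{c_i^2\delta_i[\cot\theta_i+\cot\theta_{i+1}]}{\sH^1(S_i)^2}+\tfrac{c_{i+1}^2\delta_{i+1}}{\sH^1(S_{i+1})^2\sin\theta_{i+1}}\big)$, which by the stationarity equation \eqref{stationarS1} equals $-c_i\sH^1(F_i)$; hence this piece equals $-\sum_i c_i\sH^1(F_i)\,h_i$ and cancels the length contribution exactly, yielding \eqref{diff_energy_bdd_stat}. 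The unbounded case \eqref{diff_energy_unbdd_stat} is the same computation after localizing to a disc $D$ compactly containing both curves and using the localized length changes of the two half-lines together with $h_1=h_n=0$ and $c_1=c_n=0$, precisely as in the unbounded part of the proof of Theorem~\ref{teo:cl_elastc_gradflow}.

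To finish, since $\alpha>0$, $\delta_i>0$ and all segment lengths are strictly positive, the right-hand sides of \eqref{diff_energy_bdd_stat}--\eqref{diff_energy_unbdd_stat} are nonnegative and vanish if and only if $\sH^1(\bar S_i)=\sH^1(S_i)$ for every $i$ with $c_i\neq0$; thus $\cF_\alpha(\bar\Gamma)\ge\cF_\alpha(\Gamma)$ (resp.\ $\cF_\alpha(\bar\Gamma,D)\ge\cF_\alpha(\Gamma,D)$) for all admissible $\bar\Gamma$ parallel to $\Gamma$, which is exactly the asserted minimality — the word \emph{local} being needed only because a nearby admissible polygonal curve is parallel to $\Gamma$ solely when it is close enough to it, whereas for a closed $\Gamma$ the energy is finite and the comparison is with all parallel curves. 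I expect the only real work to be the careful bookkeeping of the two reindexings and of the signs coming from \eqref{ashuc7bnn} and \eqref{stationarS1}; conceptually there is nothing subtle, since \eqref{stationarS1} is precisely the first-order optimality condition that kills the linear term, and the single nonlinearity $1/\sH^1(S_i)$ in the energy automatically produces a convex quadratic remainder.
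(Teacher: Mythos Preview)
Your proof is correct and follows essentially the same approach as the paper: both compute the energy difference, use \eqref{length_changed_hhh} to express the length changes in terms of the signed heights, reindex and apply \eqref{ashuc7bnn} and the stationarity condition \eqref{stationarS1} to kill the linear-in-$h$ terms, leaving the quadratic remainder. The only cosmetic difference is that you split $\tfrac{1}{\sH^1(\bar S_i)}-\tfrac{1}{\sH^1(S_i)}$ into its linear and quadratic parts directly, whereas the paper factors $d_i$ out first and then splits the coefficient $\big(1-\tfrac{\alpha c_i^2\sH^1(F_i)^2}{\sH^1(S_i)\sH^1(\bar S_i)}\big)$; the bookkeeping is otherwise identical.
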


\begin{proof}
Let $\Gamma:=\cup_{i=1}^n S_i$ be an admissible stationary polygonal curve and $\bar \Gamma:=\cup_{i=1}^n \bar
S_i$ be a curve  parallel to $\Gamma$ with corresponding signed heights $h_i:=\scalarp{H(S_i,\bar S_i)}{\nu_{S_i}}.$

First assume that $\Gamma$ (and hence $\bar\Gamma$) is closed and consider the difference 
\begin{align*}
\cF_\alpha(\bar \Gamma) -\cF_\alpha(\Gamma) = & \sum_{i=1}^n \norm^o(\nu_{S_i}) \Big(\sH^1(\bar S_i)-\sH^1(S_i) +
\alpha c_i^2\sH^1(F_i)^2\Big(\tfrac{1}{\sH^1(\bar S_i)}- \tfrac{1}{\sH^1(S_i)}\Big) \Big) \\
= & \sum_{i=1}^n \norm^o(\nu_{S_i}) \big(\sH^1(\bar S_i)-\sH^1(S_i)\big)\, \Big( 1 - \tfrac{\alpha
c_i^2\sH^1(F_i)^2}{\sH^1(S_i)\sH^1(\bar S_i^{})} \Big).
\end{align*}
By \eqref{length_changed_hhh} we can represent the difference as 
\begin{multline}\label{gstdzfg}
\cF_\alpha(\bar \Gamma) -\cF_\alpha(\Gamma) = -\sum_{i=1}^n 
\norm^o(\nu_{S_i}) \Big(\tfrac{h_{i-1}}{\sin\theta_i} + h_i[\cot\theta_i+\cot\theta_{i+1}] +
\tfrac{h_{i+1}}{\sin\theta_{i+1}}\Big)\,\Big(1 - \tfrac{\alpha c_i^2\sH^1(F_i)^2}{\sH^1(S_i)^2} \Big) \\
-\sum_{i=1}^n 
\norm^o(\nu_{S_i}) \Big(\tfrac{h_{i-1}}{\sin\theta_i} + h_i[\cot\theta_i+\cot\theta_{i+1}] +
\tfrac{h_{i+1}}{\sin\theta_{i+1}}\Big)\times \\
\times \Big(\tfrac{\alpha c_i^2\sH^1(F_i)^2}{\sH^1(S_i)^2}
-
\tfrac{\alpha c_i^2\sH^1(F_i)^2}{\sH^1(S_i)\Big[\sH^1(S_i)
-
\Big( \tfrac{h_{i-1}}{\sin\theta_i}
+ h_i[\cot\theta_i+\cot\theta_{i+1}] + \tfrac{h_{i+1}}{\sin\theta_{i+1}}\Big) \Big]}\Big).
\end{multline}
As in the proof of Theorem \ref{teo:cl_elastc_gradflow}, representing all sums with respect to single $h_i,$ we get
\begin{align*}
\sum_{i=1}^n 
\norm^o(\nu_{S_i}) &\Big(\tfrac{h_{i-1}}{\sin\theta_i} + h_i[\cot\theta_i+\cot\theta_{i+1}] +
\tfrac{h_{i+1}}{\sin\theta_{i+1}}\Big)\,\Big(1 - \tfrac{\alpha c_i^2\sH^1(F_i)^2}{\sH^1(S_i)^2} \Big) \\
= &\sum_{i=1}^n h_i \Big(\tfrac{\norm^o(\nu_{S_{i-1}}) }{\sin\theta_i } + \norm^o(\nu_{S_i})[\cot \theta_i+\cot
\theta_{i+1}]+ \tfrac{\norm^o(\nu_{S_{i+1}}) }{\sin\theta_{i+1} } \\
&-\tfrac{\alpha c_{i-1}^2\sH^1(F_{i-1})^2 \norm^o(\nu_{S_{i-1}})}{\sH^1(S_{i-1})^2\sin\theta_i} - \tfrac{\alpha
c_{i}^2\sH^1(F_{i})^2\norm^o(\nu_{S_{i}}) [\cot \theta_i+\cot \theta_{i+1}]}{\sH^1(S_{i})^2} -
\tfrac{\alpha c_{i+1}^2\sH^1(F_{i+1})^2\norm^o(\nu_{S_{i+1}})}{\sH^1(S_{i+1})^2\sin\theta_{i+1}}\Big) \\
= &\sum_{i=1}^n h_i \Big(-c_i\sH^1(F_i) - \tfrac{\alpha c_{i-1}^2\sH^1(F_{i-1})^2
\norm^o(\nu_{S_{i-1}})}{\sH^1(S_{i-1})^2\sin\theta_i} -
\tfrac{\alpha c_{i}^2\sH^1(F_{i})^2\norm^o(\nu_{S_{i}}) [\cot \theta_i+\cot \theta_{i+1}]}{\sH^1(S_{i})^2} -
\tfrac{\alpha c_{i+1}^2\sH^1(F_{i+1})^2\norm^o(\nu_{S_{i+1}})}{\sH^1(S_{i+1})^2\sin\theta_{i+1}}\Big),
\end{align*}
where in the last equality we used \eqref{ashuc7bnn}. By the stationarity condition each summand in the last sum is
$0.$
Thus, \eqref{gstdzfg} reads as 
\begin{equation*}%\label{lobochecksf}
\cF_\alpha(\bar \Gamma) -\cF_\alpha(\Gamma) = 
\alpha  \sum_{i=1}^n \tfrac{c_i^2\sH^1(F_i)^2\norm^o(\nu_{S_i})}{\sH^1(S_i)^2 \sH^1(\bar S_i)}
\, 
\Big(\tfrac{h_{i-1}}{\sin\theta_i} + h_i[\cot\theta_i+\cot\theta_{i+1}] +
\tfrac{h_{i+1}}{\sin\theta_{i+1}}\Big)^2.
\end{equation*}
Now, recalling \eqref{length_changed_hhh}, we conclude \eqref{diff_energy_bdd_stat}.

Analogously, if $\Gamma$ is unbounded, then $S_1,\bar S_1$ and $S_n,\bar S_n$ are half-lines with compact
symmetric differences (in particular, $c_1=c_n=0$). Hence for any bounded open set $D\subset\R^2$ compactly
containing all $S_i$ and $\bar S_i$ with $2\le i\le n-1$ we have
\begin{equation*}%\label{lobacheski_ubd}
\cF_\alpha(\bar \Gamma,D) -\cF_\alpha(\Gamma,D) = \alpha 
\sum_{i=2}^{n-1} \tfrac{ c_i^2\sH^1(F_i)^2\norm^o(\nu_{S_i})}{\sH^1(S_i)^2 \sH^1(\bar S_i)}
\, 
\Big(\tfrac{h_{i-1}}{\sin\theta_i} + h_i[\cot\theta_i+\cot\theta_{i+1}] +
\tfrac{h_{i+1}}{\sin\theta_{i+1}}\Big)^2.
\end{equation*}
Now, using again \eqref{length_changed_hhh}, we conclude \eqref{diff_energy_unbdd_stat}.
\end{proof}

\begin{remark}\label{rem:station_lengith}
The identities \eqref{diff_energy_bdd_stat} and \eqref{diff_energy_unbdd_stat} show that if $\Gamma$ and
$\bar\Gamma$ are parallel and 
both stationary, then $\sH^1(S_i)=\sH^1(\bar S_i)$ whenever $c_i\ne0,$ in other
words, in two parallel stationary curves, the length of all segments with nonzero $\norm$-curvature must
coincide.
\end{remark}

\begin{example}
Let $W^\norm = [-1,1]^2$ and consider the family of admissible polygonal curves depicted in Fig. \ref{fig:stat_09}.  
\begin{figure}[htp!]
\includegraphics[width=0.85\textwidth]{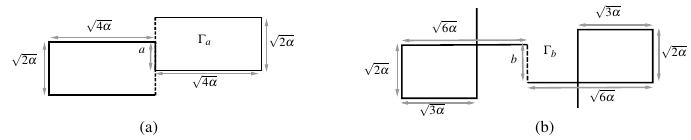}
\caption{\small Family of stationary curves. In (a), two rectangles can slide up and down with the restriction 
$a \in(0,\sqrt{2\alpha}].$  Clearly, all of such curves are stationary and parallel to each other. Also in (b), the rectangles can move up and down so that the segment of length $b$ elongates or shortens. Unlike (a), here $b\in(0,+\infty),$  and all of such curves are stationary and parallel to each other.}\label{fig:stat_09}
\end{figure}
One can readily check that the families $\{\Gamma_a\}_{a\in(0,\sqrt{2\alpha}]}$ and $\{\Gamma_b\}_{b\in(0,+\infty)}$ consist of parallel stationary curves. We refer to Theorem \ref{teo:stationary_square} fot a full classification of stationary curves in the square anisotropy. 
\end{example}

Now we prove a Liouville-Lojasiewicz (or Lojasiewicz-Simon) type inequality. 

\begin{proposition}[Lojasiewicz-Simon-type inequality, I]\label{prop:loja_simon_ineq}
Let $\norm$ be a crystalline anisotropy, $\Gamma^0:=\cup_{i=1}^nS_i^0$ be a closed admissible stationary
polygonal curve and $\Delta_1:=\Delta_1(\Gamma^0)>0$ be the constant of Lemma \ref{lem:apriori_estocada} defined
for $\Gamma:=\Gamma^0.$
Then there exist constants $C,\beta>0$  such that, given an admissible polygonal curve $\Gamma:=\cup_{i=1}^n S_i,$
parallel to $\Gamma^0$ with
\begin{equation}\label{ahstzddff}
\max_{1\le i\le n} |H(S_i^0,  S_i)|<\Delta_1, 
\end{equation}
one has 
\begin{multline}\label{loja_simo_tengis}
|\cF_\alpha( \Gamma) - \cF_\alpha(\Gamma^0)|^\beta
\le C \Big(\alpha \max_{i=1,\ldots,n} \,c_i^2\delta_i\,\sH^1(S_i)\Big)^\beta \times\\
\times\sum_{i=1}^n \Big|c_i\sH^1(F_i)+ \alpha \Big(\frac{c_{i-1}^2\delta_{i-1}}{\sH^1( S_{i-1})^2\sin\theta_i}+
\frac{c_{i}^2\delta_i[\cot \theta_i+\cot \theta_{i+1}]}{\sH^1( S_{i})^2}+\frac{c_{i+1}^2\delta_{i+1}}{\sH^1(
S_{i+1})^2\sin\theta_{i+1}}\Big)\Big|,
\end{multline}
where $\{\theta_i\}$ are the angles of $\Gamma$ and $\delta_j>0$ are defined in \eqref{def:delta_is}.
\end{proposition}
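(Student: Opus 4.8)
The scheme of the proof is to reduce \eqref{loja_simo_tengis} to a finite--dimensional estimate for the single function obtained by restricting $\cF_\alpha$ to the $n$--parameter family of curves parallel to $\Gamma^0$. For $h=(h_1,\dots,h_n)$ in the open cube $U:=\{h\in\R^n:\ \max_i|h_i|<\Delta_1\}$, Lemma \ref{lem:construc_paral_curve} produces a unique admissible polygonal curve $\Gamma(h)=\cup_i S_i(h)$ parallel to $\Gamma^0$ with $\scalarp{H(S_i^0,S_i(h))}{\nu_{S_i^0}}=h_i$, and condition \eqref{ahstzddff} says exactly that the curve $\Gamma$ of the statement equals $\Gamma(h)$ for some $h\in U$; moreover $\Gamma^0=\Gamma(0)$. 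I would then set $f(h):=\cF_\alpha(\Gamma(h))$. By \eqref{length_changed_hhh} the lengths $L_i(h):=\sH^1(S_i(h))=\sH^1(S_i^0)-\ell_i(h)$ are affine in $h$ with $\ell_i$ linear and homogeneous, and the very definition of $\Delta_1$ in Lemma \ref{lem:apriori_estocada} forces $|\ell_i(h)|\le\tfrac12\sH^1(S_i^0)$, hence $\tfrac12\sH^1(S_i^0)\le L_i(h)\le\tfrac32\sH^1(S_i^0)$ on $U$; in particular $f(h)=\sum_i\norm^o(\nu_{S_i^0})\big(L_i(h)+\alpha c_i^2\sH^1(F_i)^2/L_i(h)\big)$ is a smooth (indeed rational and pole--free) function on $U$.

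Next I would identify the two quantities occurring in \eqref{loja_simo_tengis} with the value and the gradient of $f$. By the first variation formula (together with \eqref{ashuc7bnn}), differentiation in the $j$--th height gives
$$
\partial_{h_j}f(h)=c_j\sH^1(F_j)+\alpha\Big(\tfrac{c_{j-1}^2\delta_{j-1}}{L_{j-1}(h)^2\sin\theta_j}+\tfrac{c_j^2\delta_j[\cot\theta_j+\cot\theta_{j+1}]}{L_j(h)^2}+\tfrac{c_{j+1}^2\delta_{j+1}}{L_{j+1}(h)^2\sin\theta_{j+1}}\Big),
$$
i.e.\ $\partial_{h_j}f(h)$ is precisely the $j$--th summand on the right of \eqref{loja_simo_tengis}, so that $\sum_j|\partial_{h_j}f(h)|\ge|\nabla f(h)|$, while the stationarity of $\Gamma^0$ (equation \eqref{stationarS1}) reads $\nabla f(0)=0$. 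On the other hand, since $\Gamma^0$ is stationary, identity \eqref{diff_energy_bdd_stat} furnishes the crucial \emph{sum of squares} representation
$$
f(h)-f(0)=\alpha\sum_{i=1}^n\frac{c_i^2\delta_i}{\sH^1(S_i^0)^2\,L_i(h)}\,\ell_i(h)^2\ \ge\ 0,
$$
so in particular $|\cF_\alpha(\Gamma)-\cF_\alpha(\Gamma^0)|=f(h)-f(0)$.

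The core of the argument is then an elementary \L ojasiewicz--type estimate on $U$ that tracks the scale $M:=\alpha\max_i c_i^2\delta_i\,\sH^1(S_i)=\alpha\max_i c_i^2\delta_i\,L_i(h)$ (the factor in \eqref{loja_simo_tengis}). Using $L_i(h)\ge\tfrac12\sH^1(S_i^0)$ one bounds each summand of $f(h)-f(0)$ from above by $4\,(\min_k\sH^1(S_k^0))^{-4}\,\alpha c_i^2\delta_i L_i(h)\,\ell_i(h)^2\le C_1 M\,\ell_i(h)^2$, hence $f(h)-f(0)\le C_1 M\,|\Lambda h|^2$, where $|\Lambda h|^2:=\sum_{i:\,c_i\ne0}\ell_i(h)^2$. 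For the matching lower bound I would differentiate the sum of squares representation directly and use Euler's relation $\sum_j h_j\,\partial_{h_j}\ell_i=\ell_i(h)$, obtaining
$$
\scalarp{\nabla f(h)}{h}=\alpha\sum_{i=1}^n\frac{c_i^2\delta_i\,(2\sH^1(S_i^0)-\ell_i(h))}{\sH^1(S_i^0)^2\,L_i(h)^2}\,\ell_i(h)^2\ \ge\ C_2\,|\Lambda h|^2 ,
$$
because on $U$ one has $2\sH^1(S_i^0)-\ell_i(h)\ge\tfrac32\sH^1(S_i^0)>0$ and $L_i(h)^2\le\tfrac94\sH^1(S_i^0)^2$. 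Combining the two displays with $|h|\le\sqrt n\,\Delta_1$ and Cauchy--Schwarz gives
$$
f(h)-f(0)\le\frac{C_1 M}{C_2}\,|\Lambda h|^2\le\frac{C_1 M}{C_2}\,\scalarp{\nabla f(h)}{h}\le\frac{C_1\sqrt n\,\Delta_1}{C_2}\,M\,|\nabla f(h)|\le C\,M\sum_{j=1}^n|\partial_{h_j}f(h)| ,
$$
which is \eqref{loja_simo_tengis} with $\beta=1$ and a constant $C$ depending only on $n$, $\norm$, $\alpha$ and $\Gamma^0$. If one prefers an exponent $\beta\in(0,1)$ (as is convenient for the long--time convergence result), one uses in addition that $\nabla f(h)=\sum_{c_i\ne0}b_i(h)\,\ell_i(h)\,\nabla\ell_i$ with $b_i(h)$ bounded between positive constants on $U$, and that the linear map $h\mapsto(\ell_i(h))_{c_i\ne0}$ and its adjoint yield $|\nabla f(h)|\ge C_3\,|\Lambda h|$ on $U$; combined with $f(h)-f(0)\le C_1M|\Lambda h|^2$ this gives $|\cF_\alpha(\Gamma)-\cF_\alpha(\Gamma^0)|^{1/2}\le C\,M^{1/2}\sum_j|\partial_{h_j}f(h)|$, i.e.\ $\beta=\tfrac12$.

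The one point that requires care — and where the substance of the proof really lies — is that \eqref{loja_simo_tengis} is demanded on the whole cube $U$, and not merely near the critical point $h=0$, so the classical \L ojasiewicz gradient inequality cannot be quoted as a black box. What makes the argument work globally on $U$ is precisely the quadratic (sum of squares) structure inherited from \eqref{diff_energy_bdd_stat} together with the uniform two--sided bounds $\tfrac12\sH^1(S_i^0)\le L_i(h)\le\tfrac32\sH^1(S_i^0)$: these show that the critical set of $f$ in $U$ is exactly the linear subspace $\{h\in U:\ \ell_i(h)=0\ \text{for all }i\text{ with }c_i\ne0\}$, on whose orthogonal complement $f-f(0)$ is comparable to a nondegenerate quadratic form, so that the estimates above are available uniformly.
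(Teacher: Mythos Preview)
Your argument is correct and takes a genuinely different route from the paper's. The paper parametrises by the reciprocal lengths $x_i=1/\sH^1(S_i)$, observes that the image $O$ of this map is subanalytic, and then invokes the abstract \L ojasiewicz inequality $|f(x)|\ge C'\dist(x,\{f=0\})^{\beta'}$ for the subanalytic function $f(x)=\sum_i|c_i\sH^1(F_i)+\alpha(\dots)|$; the key step is that every point of $\{f=0\}$ corresponds to a parallel stationary curve, which by \eqref{diff_energy_bdd_stat} (Remark~\ref{rem:station_lengith}) must have the same lengths $\sH^1(S_i^0)$ on the segments with $c_i\ne0$, so that $\dist(x,\{f=0\})^2$ dominates $\sum_{c_i\ne0}(\tfrac1{\sH^1(S_i^0)}-\tfrac1{\sH^1(S_i)})^2$ and hence the energy gap. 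This yields some unspecified exponent $\beta=\beta'/2$.

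Your approach instead exploits directly the sum--of--squares identity \eqref{diff_energy_bdd_stat}: writing $f(h)-f(0)=\alpha\sum_{c_i\ne0}\frac{c_i^2\delta_i}{\sH^1(S_i^0)^2L_i(h)}\,\ell_i(h)^2$ with $\ell_i$ linear homogeneous, Euler's relation gives $\scalarp{\nabla f(h)}{h}\ge C_2|\Lambda h|^2$, and combined with $f(h)-f(0)\le C_1M|\Lambda h|^2$ and $|h|\le\sqrt n\,\Delta_1$ one arrives at \eqref{loja_simo_tengis} with the explicit exponent $\beta=1$. This is more elementary (no subanalytic geometry), yields an explicit constant, and makes transparent that the critical set of $f$ on $U$ is the linear subspace $\{\ell_i(h)=0\ \forall\,c_i\ne0\}$. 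The paper's approach, by contrast, is more robust and is reused essentially verbatim for the generalised stationary curves in Proposition~\ref{prop:loja_simon_ineqII}, where the parametrising domain is more delicate and your homogeneity trick would need some adaptation.

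One remark on the optional $\beta=\tfrac12$ paragraph: the claim $|\nabla f(h)|\ge C_3|\Lambda h|$ is correct but deserves one more line, since $\Lambda^T$ need not be injective (e.g.\ for convex $\Gamma^0$ one has $m=n$ and $\dim\ker\Lambda\ge2$ from translations). The point is that $\Lambda^T$ kills only $(\mathrm{Im}\,\Lambda)^\perp$, while the component of $D(h)\Lambda h$ along $\Lambda h\in\mathrm{Im}\,\Lambda$ has size $\scalarp{D(h)\Lambda h}{\Lambda h}/|\Lambda h|\ge d_{\min}|\Lambda h|$; since $\Lambda^T$ is an isomorphism on $\mathrm{Im}\,\Lambda$, the bound follows.
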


This inequality will be used in the proof of Theorem \ref{teo:conver_stationar_sol} to study the long time
behaviour of the crystalline elastic flow.

\begin{proof}
To prove the proposition we apply the classical results on semi- and subanalytic sets and functions. We refer to
\cite{BM:1988, Hironaka:1973, Loja:1995} for precise definitions and related result.

Consider the real-analytic family $g:=(g_1,\ldots,g_n)$ with  
$$
g_i(h_1,\ldots,h_n) := \frac{1}{\sH^1(S_i^0) -
\big(\frac{h_{i-1}}{\sin\theta_i} 
+
h_i[\cot\theta_i+\cot\theta_{i+1}]
+
\frac{h_{i+1}}{\sin\theta_{i+1}}\big) },\quad
(h_1,\ldots,h_n)\in(-1.5\Delta_1, 1.5\Delta_1)^n,
$$
where $h_0:=h_n$ and $h_{n+1}:=h_1.$ 
By Lemma \ref{lem:construc_paral_curve}, for any $(h_1,\ldots,h_n)\in(-\Delta_1,\Delta_1)^n$ there exists exists
a unique polygonal curve $\Gamma$ parallel to $\Gamma$ such that $h_i=\scalarp{H(S_i^0,S_i)}{\nu_{S_i}}.$
Moreover, by \eqref{length_changed_hhh}, the definition of $\Delta_1$ and the definition of $g_i,$
$$
\tfrac{\sH^1(S_i^0)}{4} < \sH^1(S_i) < \tfrac{7\sH^1(S_i^0)}{4}
\quad\text{and}\quad 
\sH^1(S_i) = \tfrac{1}{g_i(h_1,\ldots,h_n)}, 
\quad i=1,\ldots,n. 
$$
Thus, the image 
$$
O:= g((-\Delta_1,\Delta_1)^n)\subset\R^n
$$ 
of $g$ is a bounded set contained in the compact hypercube 
$
\prod_{i=1}^n[\frac{4}{7\sH^1(S_i^0)},\frac{4}{\sH^1(S_i^0)}].
$
Being $(-\Delta_1,\Delta_1)^n$ open and $g$ real-analytic, the graph of 
$g$ is semianalytic in $\R^n\times\R^n,$ and hence, being a projection onto $\R^n,$ the set $O$ is subanalytic. In particular, the
boundary, interior and closure of $O$ are also subanalytic.

Now consider the continuous function $f:\cl{O}\to\R$ of $n$ variables  
$$
f(x): = \sum_{i=1}^n \Big|c_i\sH^1(F_i)+ \alpha\Big( \tfrac{c_{i-1}^2\delta_{i-1}}{\sin\theta_i} x_{i-1}^2
+ c_{i}^2\delta_i[\cot \theta_i+\cot \theta_{i+1}]\,x_i^2+\tfrac{
c_{i+1}^2\delta_{i+1}}{\sin\theta_{i+1}}\,x_{i+1}^2\Big)\Big|,\quad x=(x_1,\ldots,x_n),
$$
where as usual $x_0:=x_n$ and $x_{n+1}:=x_1.$ Being a sum of   absolute values of real-analytic functions, $f$
is subanalytic. Thus, we can apply the Lojasiewicz inequality (see e.g. \cite[Section IV.9]{Loja:1995}) to find
positive constants $\beta'$ and $C'>0$ (depending on $O$) such that
\begin{equation}\label{loja_ineq_subanaly}
|f(x)| \ge C' \dist(x,\{f=0\})^{\beta'},\quad x\in O,
\end{equation}
where $\{f=0\}:=\{z\in \cl{O}:\,\, f(z)=0\}$ is the $0$-level set of $f.$ 

Consider any $z=(z_1,\ldots,z_n)\in \{f=0\}$ and let $z^k\in O$ be such that
$z^k\to z$ as $k\to+\infty.$ By the definition of $O,$ for each $k\ge1$ there exists
$h^k:=(h_1^k,\ldots,h_n^k)\in (-\Delta_1,\Delta_1)^n$ such that $g(h^k) = z^k.$ Up
to a not relabeled subsequence, we may suppose $h^k\to h^\infty \in[-\Delta_1,\Delta_1]^n.$ Clearly, $z=g(h^\infty),$ and as we
observed earlier, we can find a unique polygonal curve $\Gamma^\infty:=\cup_{i=1}^nS_i^\infty,$ parallel to
$\Gamma^0,$ satisfying $h_i^\infty = \scalarp{H(S_i^0,S_i^\infty)}{\nu_{S_i^0}}$ and $\sH^1(S_i^\infty) =
\frac{1}{g_i(h^\infty)} = \frac{1}{z_i}$ for all $i=1,\ldots,n.$ The last relation, the definition of $f$ and the
equation $f(z)=0$ imply that $(\frac{1}{\sH^1 (S_1^\infty)},\ldots,\frac{1}{\sH^1(S_n^\infty)})$
satisfy \eqref{stationarS1} and thus, $\Gamma^\infty$ is
stationary. Since $\Gamma^0$ is also stationary,
parallel to $\Gamma^\infty,$ by Remark \ref{rem:station_lengith} we have 
\begin{equation*}%\label{ahszsudb}
\sH^1(S_i^0) = \sH^1(S_i^\infty)\quad\text{whenever $i\in\{1,\ldots,n\}$ and $c_i\ne0.$} 
\end{equation*}

Now, fix any $\epsilon\in(0,\Delta_1)$ and take a  polygonal curve $\Gamma,$ parallel to $\Gamma^0$ and
satisfying \eqref{ahstzddff}. By \eqref{diff_energy_bdd_stat}, we have
$$
\cF_\alpha(\Gamma) - \cF_\alpha(\Gamma^0) =\alpha \sum_{i=1}^{n} \tfrac{  c_i^2 \delta_i}{\sH^1(S_i^0)^2
\sH^1(S_i)} \,\Big(\sH^1(S_i^0) - \sH^1(S_i)\Big)^2 = \alpha 
\sum_{i=1}^n c_i^2 \delta_i\, \sH^1(S_i) \,\Big(\tfrac{1}{\sH^1(S_i^0)} - \tfrac{1}{\sH^1(S_i)}\Big)^2.
$$
Hence, 
\begin{equation}\label{kstsss}
|\cF_\alpha(\Gamma) - \cF_\alpha(\Gamma^0)| \le \alpha \max_{j}  c_j^2 \delta_j\, \sH^1(S_j)
\sum_{c_i\ne 0}\Big(\tfrac{1}{\sH^1(S_i^0)} - \tfrac{1}{\sH^1(S_i)}\Big)^2.
\end{equation}
Set
$
x=(\frac{1}{\sH^1(S_1)},\ldots,\frac{1}{\sH^1(S_n)});
$ 
by \eqref{ahstzddff} and the definition of $g,$ one has $x\in O.$ Let $ z\in \{f=0\}$ be such that
$\dist(x,\{f=0\}) = |x-  z|.$ As we observed above, $z_i = \frac{1}{\sH^1(S_i^0)}$ whenever $c_i\ne0.$ Thus,
we can represent and further estimate \eqref{kstsss} as
\begin{multline*}
|\cF_\alpha(\Gamma) - \cF_\alpha(\Gamma^0)| \le \alpha \max_{1\le i\le n}  c_i^2 \delta_i\, \sH^1(S_i)
\sum_{c_i\ne 0}|z_i-x_i|^2 \\
\le \alpha  \max_{1\le i\le n} c_i^2 \delta_i\sH^1(S_i)\, |z-x|^2=
\max_{1\le i\le n} \alpha c_i^2 \delta_i\sH^1(S_i)\,\dist(x,\{f=0\})^2.
\end{multline*}
Now, recalling \eqref{loja_ineq_subanaly}, we deduce 
$$
|\cF_\alpha(\Gamma) - \cF_\alpha(\Gamma^0)|^{\frac{\beta'}{2}} \le \Big(\alpha  \max_{1\le i\le n} c_i^2
\delta_i\sH^1(S_i)\Big)^{\frac{\beta'}{2}}\, \frac{|f(x)|}{C'},
$$
which is \eqref{loja_simo_tengis} with $\beta=\beta'/2>0$ and $C=1/C'>0.$
\end{proof}

\section{Long-time behaviour of the crystalline elastic flow}\label{sec:long_time_behaviour}

In this section we investigate the infinite time behaviour of the flow and its convergence to a stationary solution.

\subsection{Long time behaviour: regular case}%\label{subsec:regular_flow_at_infty}

Our first aim is to prove the following theorem, assuming that no segment of the  curves disappear near infinity. %\red do we say something about the regularity of $h_i$ near reastart time? \blue unfortunately, we cannot say anything because $h_i$ before restart and $h_i$ after restart completely different. but curves still continuously evolve. \black 

\begin{theorem}[Long-time behaviour, I]\label{teo:conver_stationar_sol}
Let $\norm$ be a crystalline anisotropy and $\{\Gamma(t)\}_{t\ge0}$ be the maximal (in time) elastic flow, starting from a closed admissible polygonal curve, with finitely many restarts (see Theorem \ref{teo:continue_flows}).
Assume that there exist  $T\ge0$ and $a_0>0$ such that
\begin{equation}\label{min_segments00000}
\inf_{t\ge T}\,\,\min_{i}\,\sH^1(S_i(t)) \ge a_0>0. 
\end{equation}
Then there exists a stationary polygonal curve $\Gamma^\infty,$ parallel to $\Gamma(T),$ such that $\Gamma(t)$
Kuratowski converges to $ \Gamma^\infty$ as $t\to+\infty.$
\end{theorem}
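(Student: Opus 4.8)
The plan is to exploit the gradient-flow structure together with the Lojasiewicz--Simon inequality of Proposition \ref{prop:loja_simon_ineq}, following the by-now classical Simon scheme. Under assumption \eqref{min_segments00000} the flow is, after time $T,$ a single smooth flow with no further restarts (all lengths stay bounded below, so by the a priori estimates of Lemma \ref{lem:apriori_estocada} no segment disappears), and all curves $\Gamma(t)$ for $t\ge T$ are parallel to $\Gamma(T),$ hence described by signed heights $h(t)=(h_1(t),\ldots,h_n(t))$ solving the ODE system \eqref{elastic_ode123}. First I would record the uniform bounds: by the energy dissipation equality \eqref{ZgEW} and \eqref{length_bnd12}--\eqref{length_bnd13}, the total length of $\Gamma(t)$ is bounded and all segment lengths lie in a fixed compact interval $[a_0,M],$ so the vector field in \eqref{elastic_ode123} is smooth with bounded derivatives along the flow; in particular $|h'(t)|\le C$ for all $t\ge T$ and the curves $\Gamma(t)$ remain in a fixed bounded region of $\R^2.$ Consequently there is at least one Kuratowski limit point $\Gamma^\infty$ along a sequence $t_k\to+\infty,$ and by lower semicontinuity plus the lengths staying in $[a_0,M],$ the limit $\Gamma^\infty$ is an admissible polygonal curve parallel to $\Gamma(T)$ with the same angles and transition numbers; passing to the limit in \eqref{grad_floqasdf} (using $\int_T^\infty \sum_i |h_i'|^2 \sH^1(S_i)\,dt<+\infty$, so $h'(t_k)\to 0$ along a further subsequence) shows that $\Gamma^\infty$ satisfies the stationarity equation \eqref{stationarS1}, i.e. $\Gamma^\infty$ is stationary.

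Next comes the heart of the argument. Let $E_\infty:=\lim_{t\to\infty}\cF_\alpha(\Gamma(t))$, which exists by monotonicity (Theorem \ref{teo:cl_elastc_gradflow}), and note $E_\infty=\cF_\alpha(\Gamma^\infty)$ by continuity of $\cF_\alpha$ on parallel curves. I would apply Proposition \ref{prop:loja_simon_ineq} with $\Gamma^0:=\Gamma^\infty$: there exist $C,\beta>0$ and $\Delta_1>0$ such that whenever $\Gamma(t)$ is $\Delta_1$-close to $\Gamma^\infty$ in the sense of \eqref{ahstzddff},
\begin{equation*}
|\cF_\alpha(\Gamma(t))-E_\infty|^{\beta}\le C_1\,\sum_{i=1}^n\Big|c_i\sH^1(F_i)+\alpha\Big(\tfrac{c_{i-1}^2\delta_{i-1}}{\sH^1(S_{i-1}(t))^2\sin\theta_i}+\tfrac{c_i^2\delta_i[\cot\theta_i+\cot\theta_{i+1}]}{\sH^1(S_i(t))^2}+\tfrac{c_{i+1}^2\delta_{i+1}}{\sH^1(S_{i+1}(t))^2\sin\theta_{i+1}}\Big)\Big|,
\end{equation*}
where $C_1$ absorbs the (bounded) prefactor $\alpha\max_i c_i^2\delta_i\sH^1(S_i(t))$. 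The right-hand side is, by \eqref{elastic_ode123} and the two-sided length bounds $a_0\le\sH^1(S_i)\le M$, comparable to $\sum_i|h_i'(t)|$, which in turn (again by length bounds, and the equivalence of norms in $\R^n$) is comparable to the dissipation $\big(\!-\tfrac{d}{dt}\cF_\alpha(\Gamma(t))\big)^{1/2}$ up to constants. Thus, setting $\psi(t):=(\cF_\alpha(\Gamma(t))-E_\infty)^{1-\beta}$ (assuming WLOG $\beta\in(0,1)$, which can be arranged by enlarging $\beta$), one gets on any interval where $\Gamma(t)$ stays $\Delta_1$-close to $\Gamma^\infty$ the differential inequality $-\tfrac{d}{dt}\psi(t)\ge c_2\,\|h'(t)\|_{L^1}$, hence $\int\|h'(t)\|_{L^1}\,dt<+\infty$ on such intervals.

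The remaining, and main, obstacle is the standard bootstrap that upgrades this local finiteness of the ``velocity integral'' to convergence: I must show that the flow, once it enters a small enough neighborhood of $\Gamma^\infty$ (which it does along $t_k$, since $\Gamma(t_k)\overset{K}{\to}\Gamma^\infty$ and the $h$-description makes Kuratowski convergence equivalent to $h(t_k)\to h^\infty$), never leaves a slightly larger neighborhood. The argument is by contradiction/continuity: fix $r<\Delta_1$; if at some first time $s>t_k$ the distance $\max_i|H(S_i^\infty,S_i(s))|$ reaches $r$, then on $[t_k,s]$ the Lojasiewicz inequality applies, so $\int_{t_k}^s\|h'\|_{L^1}\,dt\le \tfrac{1}{c_2}\psi(t_k)\to 0$ as $k\to\infty$; but $\|h(s)-h(t_k)\|$ is bounded by this integral while $\|h(s)-h^\infty\|\ge$ const and $\|h(t_k)-h^\infty\|\to0$, a contradiction for $k$ large. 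Hence for large $k$ the flow stays in the $r$-neighborhood for all $t\ge t_k$, the full integral $\int_{t_k}^\infty\|h'\|_{L^1}\,dt<+\infty$, so $h(t)$ is Cauchy as $t\to\infty$ and converges to a limit $h^\star$; the corresponding curve is parallel to $\Gamma(T)$, stationary (by the limiting argument above, or directly since $h'(t)\to0$), and equals $\Gamma^\infty$. Finally, $h(t)\to h^\star$ in $\R^n$ is equivalent to Kuratowski convergence $\Gamma(t)\overset{K}{\to}\Gamma^\infty$, completing the proof. The only genuinely delicate points are (i) checking that $\beta$ may be taken in $(0,1)$ — if the Lojasiewicz exponent from \eqref{loja_ineq_subanaly} forces $\beta\ge 1$ one instead uses the standard variant with $\psi(t)=\log(\cF_\alpha(\Gamma(t))-E_\infty)$ or simply notes the inequality holds a fortiori with any larger exponent since the left side is small — and (ii) the equivalence, under the uniform length bounds, between the Lojasiewicz right-hand side, $\|h'(t)\|$, and $(-\tfrac{d}{dt}\cF_\alpha)^{1/2}$, which is a routine but necessary computation using \eqref{grad_floqasdf} and \eqref{elastic_ode123}.
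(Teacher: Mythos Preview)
Your overall strategy is the same as the paper's --- extract a stationary subsequential limit, apply the Lojasiewicz--Simon inequality of Proposition~\ref{prop:loja_simon_ineq}, derive a differential inequality of the form $-\tfrac{d}{dt}\psi(t)\gtrsim\sum_i|h_i'(t)|$, and run the standard trapping argument to conclude $h(t)$ is Cauchy --- and the handling of the exponent and the equivalence in your point~(ii) are both fine.

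There is, however, one genuine gap. You assert that ``the curves $\Gamma(t)$ remain in a fixed bounded region of $\R^2$'' as a consequence of $|h'(t)|\le C$. This does not follow: a uniform bound on the velocity only gives $|h(t)|=O(t)$, and nothing in the energy dissipation (which controls $\int|h'|^2\,dt$, not $\int|h'|\,dt$) rules out drift to infinity \emph{a priori}. Without boundedness you cannot extract a Kuratowski limit point $\Gamma^\infty$ of the actual curves $\Gamma(t_k)$, and the whole Lojasiewicz scheme stalls before it starts, since the inequality \eqref{loja_simo_tengis} requires $\Gamma(t)$ to be $\Delta_1$-close to a fixed stationary curve.

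The paper fixes this by recentering: one chooses translations $p_j\in\R^2$ so that $p_j+\Gamma(t_j)$ sits in a fixed disc (possible because only the total length is bounded), passes to a stationary Kuratowski limit $\Gamma^\infty$ of the \emph{translated} curves, and then runs your Lojasiewicz/trapping argument for the translated flow $\tilde\Gamma^j(\cdot):=p_j+\Gamma(\cdot)$, which satisfies the same ODE. This yields that $\{h_i(t)\}$ is Cauchy as $t\to+\infty$ for every $i$. Only at the very end does one recover boundedness of position: since $\tilde h_i^j(t)=h_i(t)+\langle p_j,\nu_{S_i^0}\rangle$ and the left side converges for each $i$, the scalars $\langle p_j,\nu_{S_i^0}\rangle$ converge; because a closed admissible curve has at least two nonparallel normals, this forces $p_j\to p^\infty$, and then $\Gamma(t)\overset{K}{\to}-p^\infty+\Gamma^\infty$. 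Inserting this recentering step repairs your argument completely.
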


\begin{proof}
We follow the arguments in the Euclidean case, see e.g. \cite{MP:2021_cvpde}. There is no loss of generality in
assuming $T=0$ and let $\Gamma^0:=\Gamma(T):=\cup_{i=1}^nS_i^0.$
Setting $h_i:=\scalarp{H(S_i^0,S_i(t))}{\nu_{S_i^0}}$ and integrating \eqref{grad_floqasdf} in $[0,t],$ we get 
\begin{equation}\label{hztagsr56df}
\cF_\alpha(\Gamma^0) - \cF_\alpha(\Gamma(t)) = \sum_{i=1}^n \int_0^t \tfrac{|h_i'(s)|^2\sH^1(S_i(s))}{\norm^o(S_i^0)}ds.
\end{equation}
Thus, 
$$
\sup_{t\ge0 } \cF_\alpha(\Gamma(t)) \le \cF_\alpha(\Gamma^0)<+\infty
$$
and hence, by \eqref{min_segments00000} and the definition of $\cF_\alpha,$ 
\begin{equation}\label{hamidaka1w23}
0<a_0 \le \sH^1(S_i(t)) \le \frac{\cF_\alpha(\Gamma^0)}{c_\norm}\quad \text{for any $i=1,\ldots,n$ and $t\ge0$}
\end{equation}
(see also Corollary \ref{cor:segments_nonzero_curva}). Letting $t\to+\infty$ in \eqref{hztagsr56df} and using
the lower bound in \eqref{hamidaka1w23}, we can find a sequence $t_j\nearrow+\infty$ such that
\begin{equation*}%\label{gayratasdd}
\lim\limits_{j\to+\infty}\,h_i'(t_j)=0\quad\text{as $j\to+\infty$ for all $i=1,\ldots,n.$}
\end{equation*}
Now consider the closed curves $\Gamma(t_j),$ $j\ge1.$ By the upper bound in \eqref{hamidaka1w23}, it follows that 
$\sH^1(\Gamma(t_j)) \le C$ for some $C>0$ depending only on $n,$ $\cF_\alpha(\Gamma^0)$ and $ c_\norm.$
Thus, $\Gamma(t_j)$ is contained in a disc of radius $2C$ centered at some point of $\Gamma(t_j).$ Therefore,
we can find a sequence $(p_j)$ of vectors in $\R^2,$ for which the translated curves $p_j+\Gamma(t_j)$ are
contained in the disc $D_{2C}$ centered at the origin. By compactness, up to a subsequence,
$p_j+\Gamma(t_j)\overset{K}{\to} \Gamma^\infty$ for some closed set $\Gamma^\infty.$ In view of the bounds in
\eqref{hamidaka1w23}, $\Gamma^\infty$ is also an admissible polygonal curve, parallel to $\Gamma^0,$ and by the
Kuratowski convergence,
\begin{equation}\label{gdtzerr}
\lim\limits_{j\to+\infty}\,\,\max_{1\le i\le n} |H(S_i^\infty,p_j+S_i(t_j))| = 0.
\end{equation}
In particular, 
\begin{equation}\label{ud8immm}
\lim\limits_{j\to+\infty}\,\sH^1(S_i(t_j)) = \sH^1(S_i^\infty),\quad i=1,\ldots,n.
\end{equation}

Now consider the translations $\tilde \Gamma^j(\cdot):=p_j+\Gamma(\cdot).$ Clearly, 
\begin{equation}\label{hstdzfgg}
\tilde h_i^j(t):=\scalarp{H(S_i^0,\tilde S_i^j(t))}{\nu_{S_i^0}} = h_i(t) + \scalarp{p_j}{\nu_{S_i^0}},\quad
i=1,\ldots,n.
\end{equation}
Since the crystalline elastic flow $\{\Gamma(\cdot)\}$ is invariant under translations and $p_j$ is independent of
time,
\begin{equation}\label{aus6fbnb}
\frac{d}{dt} \tilde h_i^j = - \tfrac{\Theta_i^j}{\sH^1(\tilde S_i^j)}:= - \norm^o(\nu_{S_i^0})\Big(\tfrac{c_i\sH^1(F_i)}{\sH^ 1(\tilde S_i^j)} +
\tfrac{\alpha}{\sH^1(\tilde S_i^j)} \Big[\tfrac{c_{i-1}^2\delta_{i-1}}{\sH^1(\tilde S_{i-1}^j)^2\sin\theta_i}+
\tfrac{c_{i}^2\delta_i[\cot \theta_i+\cot \theta_{i+1}]}{\sH^1(\tilde
S_i^j)^2}+\tfrac{c_{i+1}^2\delta_{i+1}}{\sH^1(\tilde S_{i+1}^j)^2\sin\theta_{i+1}}\Big]\Big)
\end{equation}
in $(0,+\infty),$ where as usual 
$$
\delta_i:=\sH^1(F_i)^2 \norm^o(\nu_{S_i}),\quad i=1,\ldots,n.
$$
In particular, recalling $\sH^1(\tilde S_i^j(t_j)) = \sH^1(S_i(t_j)),$ the lower bound in \eqref{hamidaka1w23}
and the relations \eqref{ud8immm}, and letting $j\to+\infty$ in \eqref{aus6fbnb} evaluated at $t_j,$ we deduce
$$
c_i\sH^1(F_i) + \alpha\Big[\tfrac{c_{i-1}^2\delta_{i-1}}{\sH^1(S_{i-1}^\infty)^2\sin\theta_i}+
\tfrac{c_{i}^2\delta_i[\cot \theta_i+\cot
\theta_{i+1}]}{\sH^1(S_i^\infty)^2}+\tfrac{c_{i+1}^2\delta_{i+1}}{\sH^1(S_{i+1}^\infty)^2\sin\theta_{i+1}}\Big] =
0,\quad i=1,\ldots,n.
$$
Thus, $\Gamma^\infty$ is stationary.
\medskip 

For each $j\ge1,$ let $I_j\subset(0,+\infty) $ be the set of all $t>0$ for which 
$$
\max_{1\le i\le n} |H(S_i^\infty, \tilde S_i^j(t)|<\Delta_1,
$$
where $\Delta_1:=\Delta_1(\Gamma^\infty)$ is given by Lemma \ref{lem:apriori_estocada} with
$\Gamma=\Gamma^\infty.$ In view of \eqref{gdtzerr}, $I_j$ is nonempty (it contains at least $t_j$) and by the
continuity of $\tilde h_i^j,$ it is an open set. Let $C,\beta>0$ be given by Proposition
\ref{prop:loja_simon_ineq} applied with $\Gamma^\infty$ replacing $\Gamma$ and consider the function
$$
\ell(t):=|\cF_\alpha(\tilde \Gamma^j(t)) - \cF_\alpha(\Gamma^\infty)|^\sigma,\quad t\in I_j,
$$
for some $\sigma\in(0,1)$ to be chosen later.  Note that by the Lojasiewicz-Simon inequality \eqref{loja_simo_tengis}, $\ell(t_j)\to0$ as $j\to+\infty. $
Moreover,
\begin{align*}
\ell'(t) = & \sigma |\cF_\alpha(\tilde \Gamma^j(t)) -
\cF_\alpha(\Gamma^\infty)|^{\sigma-1}\,\frac{d}{dt}\cF_\alpha(\tilde \Gamma^j(t)) = -\sigma
\ell(t)^{\frac{\sigma-1}{\sigma}}\sum_{i=1}^n\Big |\frac{d}{dt}{\tilde h_i^j}(t)\Big|^2\sH^1(\tilde S_i^j(t)).
\end{align*}
Thus, using \eqref{hamidaka1w23}, the convexity of $p\mapsto |p|^2$ and the equation \eqref{aus6fbnb} we find
\begin{align*}
-\ell'(t) \ge \frac{\sigma a_0\ell(t)^{\frac{\sigma-1}{\sigma}} }{n}\, \Big(\sum_{i=1}^n \Big|\frac{d}{dt}{\tilde h_i^j}(t)\Big|\Big)^2 = \frac{\sigma a_0\ell(t)^{\frac{\sigma-1}{\sigma}} }{n}\, \sum_{i=1}^n \Big|\frac{d}{dt}{\tilde h_i^j}(t)\Big| \,\sum_{i=1}^n
\Big|\tfrac{\Theta_i^j(t)}{\sH^1(\tilde S_i^j)}\Big|
\end{align*}
for the $\Theta_i^j$ defined in \eqref{aus6fbnb}. 
By \eqref{loja_simo_tengis} and the upper bound in \eqref{hamidaka1w23}, the last sum can be bounded from below by 
$$
\frac{\ell(t)^{\sigma \beta}}{C \cF_\alpha(\Gamma^0)\, (\max_i \alpha c_i^2\delta_i \sH^1(\tilde S_i^j))^\beta},
$$
and therefore, 
\begin{align*}
-\ell'(t) \ge 
 \frac{\sigma a_0\ell(t)^{\frac{\sigma-1}{\sigma} + \sigma\beta} }{C n\cF_\alpha(\Gamma^0)\, (\max_i \alpha c_i^2\delta_i \sH^1(\tilde S_i^j))^\beta}\, \sum_{i=1}^n \Big|\frac{d}{dt}{\tilde h_i^j}(t)\Big| \,\sum_{i=1}^n
\Big|\tfrac{\Theta_i^j(t)}{\sH^1(\tilde S_i^j)}\Big|
\end{align*}
Now we choose $\sigma$ as 
the unique positive solution of the equation 
$$
\beta\sigma^2+\sigma-1 = 0.
$$
Then using once more the relation $\sH^1(\tilde S_i^j) = \sH^1(S_i)$ and the upper bound in \eqref{hamidaka1w23}, we obtain
\begin{align}\label{hatagsdf}
-\ell'(t) \ge C_1 \sum_{i=1}^n \Big|\frac{d}{dt}{\tilde h_i^j}(t)\Big|
\end{align}
for some $C_1>0$ depending only on $\sigma,$ $n,$ $\alpha,$ $\cF_\alpha(\Gamma^0)$ and $a_0.$

Now, consider any finite interval $(l,r)\subset I_j.$ As in \cite[Eq. 4.7]{MP:2021_cvpde}, by \eqref{hatagsdf} we
have
\begin{equation}\label{fahstdzvb}
%\sum_{i=1}^n|h_i(r) -  h_i(l)| =
\sum_{i=1}^n|\tilde h_i^j(r) - \tilde h_i^j(l)| \le \int_{l}^{r} \sum_{i=1}^n \Big|\frac{d}{dt}\tilde
h_i^j(t)\Big| dt
\le -\frac{1}{C_1} \int_{l}^{r} \ell'(t)dt \le \frac{\ell(l)}{C_1} = \frac{1}{C_1}|\cF_\alpha(\tilde \Gamma^j(l))
- \cF_\alpha(\Gamma^\infty)|^{\sigma}.
\end{equation} 

Fix any  $\epsilon\in(0,2^{-10}\Delta_1)$ and let $j\ge1$ be so large that 
$$
\max_{1\le i\le n} |H(S_i^\infty, \tilde S_i^j(t_j))|<\epsilon\quad\text{and}\quad |\cF_\alpha(\tilde
\Gamma^j(l)) - \cF_\alpha(\Gamma^\infty)|^{\sigma}<C_1\epsilon.
$$
Let us choose $l=t_j$ in \eqref{fahstdzvb} and let $\bar r\in(t_j,+\infty]$ be the supremum of all $r$ for which
$(l,r)\subset I_j.$ We claim that $\bar r=+\infty.$ Indeed, if $\bar r<+\infty,$ by the continuity of $t\mapsto
|H(S_i^\infty,\tilde S_i^j(t))|,$ we would have
\begin{equation}\label{rrrtzdv}
\max_{1\le i\le n} |H(S_i^\infty, \tilde S_i^j(\bar r))| = \Delta_1.
\end{equation}
On the other hand, by the choice of $\epsilon$ and the relation \eqref{fahstdzvb} applied in $(t_j,\bar r)$ we
would have
$$
|H(S_i^\infty, \tilde S_i^j(\bar r))| \le |H(S_i^\infty, \tilde S_i^j(t_j))| + |H(\tilde S_i^j(t_j), \tilde
S_i^j(\bar r))| <\epsilon + |\tilde h_i^j(t_j) - \tilde h_i^j(\bar r)|<2\epsilon.
$$
Thus, the equality in \eqref{rrrtzdv} is impossible and $\bar r=+\infty.$

In view of \eqref{hstdzfgg} and \eqref{fahstdzvb} as well as the choice of $\epsilon,$ for any $t>t_j$ we have
\begin{equation*}
\sum_{i=1}^n|h_i(t) -  h_i(t_j)| = \sum_{i=1}^n|\tilde h_i^j(t) - \tilde h_i^j(t_j)| \le \epsilon.
\end{equation*}
This estimate shows that the families $\{h_i(t)\}_{t>0},$ $i=1,\ldots,n,$ are fundamental as $t\to+\infty.$ Then there exists $\bar h_i:=\lim_{t\to+\infty} h_i(t)$ for any $i.$
In particular, applying \eqref{hstdzfgg} at $t=t_j$ and letting $j\to+\infty$ we find that the limit of
$\scalarp{p_j}{\nu_{S_i^0}}$ exists for all $i.$ Since $\Gamma^0$ is closed, it has at least two nonparallel
normals. Thus, the sequence $p_j$ also converges to some $p^\infty$ as $j\to+\infty.$ Finally, we claim that
$\Gamma(t)\overset{K}{\to} -p^\infty + \Gamma^\infty$ as $t\to+\infty.$
Indeed, for any $i\in\{1,\ldots,n\}$ and $t>t_j,$
\begin{multline*}
|H(-p^\infty + S_i^\infty, S_i(t))| \le |H(-p_j+S_i^\infty,-p^\infty+S_i^\infty| + |H(S_i^\infty, p_j+S_i(t_j))|
+ |H(p_j+S_i(t_j), p_j+S_i(t))| \\
\le |p_j-p^\infty| + |H(S_i^\infty, p_j+S_i(t_j))| + |h_i(t_j)-\bar h_i(t)| \to0
\end{multline*}
as $t\to+\infty$ and $j\to+\infty.$ Since all segments have length away from zero, the segments of $\Gamma(t)$ Kuratowski converges to the corresponding segments of $-p^\infty +\Gamma^\infty,$ which implies the claim.
\end{proof}

A special case of elastic flows satisfying \eqref{min_segments00000} for all times is the evolution of convex curves: we say an admissible polygonal curve $\Gamma$ is convex if $c_i\ne0$ for any segment $S_i$ of $\Gamma.$

\begin{corollary}[Convex evolution]\label{cor:convex_evolution}
Let $\norm$ be a crystalline anisotropy and let $\Gamma^0$ be a closed convex admissible polygonal curve. Let $\{\Gamma(t)\}_{t\in[0,T^\dag)}$ be the unique elastic flow starting from $\Gamma^0.$ Then $T^\dag=+\infty$ and there exists $p\in\R^2$ such that 
$$
\text{$K$-}\lim\limits_{t\to+\infty}\Gamma(t) = \p W_{\sqrt\alpha}^\norm(p),
$$
where $W_{\sqrt\alpha}^\norm(p)$ is the Wulff shape of radius $\sqrt{\alpha},$ centered at $p,$ which is a stationary curve by Example \ref{ex:evol_Wulff}.
\end{corollary}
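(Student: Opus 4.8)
The plan is to combine the persistence of the transition numbers along the flow with Theorem~\ref{teo:conver_stationar_sol}, and then to identify the limit by a rigidity property of convex stationary curves.

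First I would note that convexity is preserved: since $\Gamma(t)$ is parallel to $\Gamma^0$ for every $t$, the transition numbers are unchanged, so $c_i(t)=c_i^0\ne 0$ for all $i$ and all $t\in[0,T^\dag)$. If $T^\dag<+\infty$, Theorem~\ref{teo:existence}(c) would force a segment with \emph{zero} $\norm$-curvature to vanish at $T^\dag$, which is excluded; hence $T^\dag=+\infty$ and, by Theorem~\ref{teo:continue_flows}, no restart occurs. Because every $c_i\ne 0$, the lower bound~\eqref{length_bnd13} of Corollary~\ref{cor:segments_nonzero_curva} gives $\sH^1(S_i(t))\ge \alpha c_\norm c_i^2\sH^1(F_i)^2/\cF_\alpha(\Gamma^0)>0$, uniformly in $t\ge0$, so hypothesis~\eqref{min_segments00000} of Theorem~\ref{teo:conver_stationar_sol} holds with $T=0$. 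That theorem then produces a stationary polygonal curve $\Gamma^\infty$, parallel to $\Gamma^0$, with $\Gamma(t)\overset{K}{\to}\Gamma^\infty$ as $t\to+\infty$.

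It remains to identify $\Gamma^\infty$. Up to reorienting we may assume $c_i=1$ for all $i$ (for an admissible curve the nonzero transition numbers must all have the same sign, otherwise two consecutive segments would be collinear). Then, traversing $\Gamma^0$, its Cahn--Hoffman field moves monotonically around $\p W^\norm$, so the ordered list of unit normals of $\Gamma^0$ consists of $m\ge1$ consecutive copies of the cyclic list of facet normals of $W^\norm$, where $m$ is the index of $\Gamma^0$; hence, for every $R>0$, the $m$-fold cover $\Sigma_R$ of $\p W_R^\norm$ is a closed polygonal curve parallel to $\Gamma^0$. By Example~\ref{ex:evol_Wulff} the radius $R=\sqrt\alpha$ makes $\p W_{\sqrt\alpha}^\norm$ stationary, and since the stationarity condition~\eqref{stationarS1} is a per-segment identity, $\Sigma_{\sqrt\alpha}$ is stationary too. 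Now $\Gamma^\infty$ and $\Sigma_{\sqrt\alpha}$ are parallel stationary curves all of whose segments have $c_i=1\ne0$, so Remark~\ref{rem:station_lengith} yields that corresponding segments have equal lengths; since a closed polygonal curve is determined up to translation by the ordered list of normals and lengths of its segments, $\Gamma^\infty$ is a translate of $\Sigma_{\sqrt\alpha}$, i.e. $\Gamma^\infty=\p W_{\sqrt\alpha}^\norm(p)$ as a point set for some $p\in\R^2$. Together with $\Gamma(t)\overset{K}{\to}\Gamma^\infty$, this is the assertion. The only genuinely delicate step is the identification in this last paragraph --- the combinatorial claim that a closed convex admissible curve is parallel to an integer cover of $\p W^\norm$ (with the bookkeeping when the index exceeds $1$) and the verification that the relevant cover may be taken to be a Wulff shape, so that Remark~\ref{rem:station_lengith} applies; the remaining steps are immediate from the earlier results.
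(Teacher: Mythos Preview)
Your proof is correct and follows essentially the same approach as the paper: both establish $T^\dag=+\infty$ via the uniform lower bound on segment lengths (since all $c_i\ne0$), invoke Theorem~\ref{teo:conver_stationar_sol} for convergence to a stationary limit, and identify the limit by comparing it to an $m$-cover of $\p W_{\sqrt\alpha}^\norm$ via Remark~\ref{rem:station_lengith}. Your identification step is in fact spelled out more carefully than the paper's (you make explicit the index/cover structure and the ``determined up to translation by normals and lengths'' step), but the underlying argument is the same.
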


\begin{proof}
Since $c_i\ne 0$ for any segments $S_i$ of $\Gamma(\cdot),$  by \eqref{length_bnd13}
$$
\min_{i} \sH^1(S_i(t)) \ge \min_i  \tfrac{\alpha c_\norm c_i^2\sH^1(F_i)^2}{\cF_\alpha(\Gamma^0)},\quad t\in[0,T^\dag).
$$
Thus, by Theorem \ref{teo:existence}, $T^\dag = +\infty$ and thus,  \eqref{min_segments00000} follows. Moreover, By Example \ref{ex:evol_Wulff}, $\Sigma:=\p W_{\sqrt\alpha}^\norm$ is stationary, and one can readily check that if the image of another polygonal curve $\Sigma'$ with index $m\ne0$ is $\Sigma$ (that is $\Sigma'$ covers $\Sigma$ $m$ times), then by Remark \ref{rem:station_lengith},  every stationary curve, whose segments are parallel to those of $\Sigma',$ must be the $m$-cover of a Wulff shape of radius $\sqrt{\alpha}.$ Since $\Gamma^0$ is closed, admissible and convex, its loops are constructed by a polygons with the same angles as the Wulff shapes (parallel to Wulff shapes) and  never disappear. By Theorem \ref{teo:conver_stationar_sol}, $\Gamma(t)\overset{K}{\to}\Gamma^\infty$ for some stationary curve $\Gamma^\infty,$ whose
index $m$ is the same as $\Gamma^0;$ and segments are parallel to those of $m$-cover of a Wulff shape, which implies it is itself a $m$-cover of a Wulff shape.
\end{proof}

\subsection{Long time behaviour: irregular case}

The proof of Theorem \ref{teo:conver_stationar_sol} heavily relies on the lower bound assumption
\eqref{min_segments00000}. The aim of this section is to prove the following long-time behaviour of the regular crystalline elastic flow, where we drop \eqref{min_segments00000}.

\begin{theorem}[Long-time behaviour, II]\label{teo:long_time_general}
Let $\norm$ be a crystalline anisotropy and $\{\Gamma(t)\}_{t\ge0}$ be the unique crystalline elastic flow starting from a closed admissible polygonal curve $\Gamma(0)$ with possible finitely many restarts.
Assume that there exists $T>0$ such that 
$$
\inf_{[T, T+m]}\min_{i=1,\ldots,n}\sH^1(S_i(t))>0\quad\text{for any $m>0$}
\qquad
\text{and}
\qquad 
\liminf\limits_{t\to+\infty}\,\min_{i=1,\ldots,n}\sH^1(S_i(t))=0.
$$
Then there exists a closed  admissible polygonal curve $\Gamma^\infty$ such that $\Gamma(t)\overset{K}{\to}\Gamma^\infty$ as $t\to+\infty.$ Moreover, $\Gamma^\infty$ is represented as a union of (possibly degenerate, i.e., zero-length) segments $\{S_i^\infty\}_{i=1}^n$ with 
$$
S_i(t)\overset{K}\to S_i^\infty\quad \text{and}\quad \sH^1(S_i(t))\to\sH^1(S_i^\infty)\quad\text{as $t\to+\infty$}\quad \text{for any $i=1,\ldots,n.$}
$$
Furthermore, $S_i^\infty$ is degenerate only if $c_i=0$ and 
if $\{\theta_i\}$ is the set of angles of $\Gamma(T),$ then 
\begin{equation}\label{gen_statS1}
c_i\sH^1(F_i)+ \alpha \Big(\frac{c_{i-1}^2\delta_{i-1}}{\sH^1(S_{i-1}^\infty)^2\sin\theta_i}+ \frac{c_{i}^2\delta_i[\cot
\theta_i+\cot \theta_{i+1}]}{\sH^1(S_{i}^\infty)^2}+\frac{c_{i+1}^2\delta_{i+1}}{\sH^1(S_{i+1}^\infty)^2\sin\theta_{i+1}}\Big)
= 0
\end{equation}
for any nondegenerate segment $S_i^\infty,$ where $\delta_j$ are  defined in \eqref{def:delta_is}. 
\end{theorem}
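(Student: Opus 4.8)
The plan is to adapt the scheme of the proof of Theorem~\ref{teo:conver_stationar_sol}, replacing the uniform lower bound on the segment lengths by a separate treatment of the segments whose length degenerates, and replacing Proposition~\ref{prop:loja_simon_ineq} by its variant adapted to \emph{generalized stationary curves} --- curves for which \eqref{gen_statS1} holds only at the nondegenerate segments, the degenerate ones having $c_i=0$ (Proposition~\ref{prop:loja_simon_ineqII}). First I would reduce to $T=0$ and set $\Gamma^0:=\Gamma(T)=\cup_{i=1}^nS_i^0$; by the first hypothesis no restart occurs after $T$, so the flow is smooth on $(0,+\infty)$ with heights $h_i(t):=\scalarp{H(S_i^0,S_i(t))}{\nu_{S_i^0}}\in C^0[0,+\infty)\cap C^\infty(0,+\infty)$, and by Theorem~\ref{teo:cl_elastc_gradflow} and \eqref{ZgEW} the energy $t\mapsto\cF_\alpha(\Gamma(t))$ is nonincreasing, bounded below, hence converges to some $\cF_\infty\ge0$, while $\sum_{i=1}^n\int_0^{+\infty}|h_i'(s)|^2\sH^1(S_i(s))\,ds<+\infty$. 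By Corollary~\ref{cor:segments_nonzero_curva} the lengths $\sH^1(S_i(t))$ are uniformly bounded above and are bounded below by a positive constant whenever $c_i\ne0$; hence the \emph{vanishing set} $V:=\{i:\liminf_{t\to+\infty}\sH^1(S_i(t))=0\}$ is nonempty (by the second hypothesis and a pigeonhole argument) and satisfies $V\subseteq\{i:c_i=0\}$, whereas for $i\notin V$ the length $\sH^1(S_i(t))$ is eventually $\ge a_i>0$, so that $\int^{+\infty}|h_i'|^2\,dt<+\infty$.

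Next, along a suitable sequence $t_j\nearrow+\infty$ one may also assume $h_i'(t_j)\to0$ for every $i\notin V$. Translating $\Gamma(t_j)$ by vectors $p_j$ so as to keep the curves in a fixed disc (as in Theorem~\ref{teo:conver_stationar_sol}) and using the uniform length bounds, up to a subsequence $p_j+\Gamma(t_j)\overset{K}{\to}\Gamma^\infty$, where $\Gamma^\infty=\cup_{i=1}^nS_i^\infty$ is a closed polygonal curve written as a union of (possibly degenerate) segments with $\sH^1(S_i(t_j))\to\sH^1(S_i^\infty)$; by the length bounds $S_i^\infty$ is degenerate only if $i\in V$, hence only if $c_i=0$. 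Passing to the limit $j\to+\infty$ in \eqref{elastic_ode123} at $t=t_j$ for those indices $i$ with $c_i\ne0$ --- where every length appearing on the right-hand side is bounded away from $0$, the terms carrying a factor $c_{i\pm1}^2$ with $i\pm1\in V$ vanishing identically --- yields \eqref{gen_statS1} at each such segment, so that $\Gamma^\infty$ is generalized stationary.

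Now I would run the Lojasiewicz--Simon argument. With $\tilde\Gamma^j:=p_j+\Gamma$ and $\tilde h_i^j(t):=\scalarp{H(S_i^0,\tilde S_i^j(t))}{\nu_{S_i^0}}=h_i(t)+\scalarp{p_j}{\nu_{S_i^0}}$, let $I_j$ be the open set of times at which $\tilde\Gamma^j(t)$ is close to $\Gamma^\infty$ in the sense appropriate to Proposition~\ref{prop:loja_simon_ineqII}, and set $\ell(t):=|\cF_\alpha(\tilde\Gamma^j(t))-\cF_\alpha(\Gamma^\infty)|^\sigma$; using $\ell'=-\sigma\ell^{(\sigma-1)/\sigma}\sum_i|\frac{d}{dt}\tilde h_i^j|^2\sH^1(\tilde S_i^j)$, the dissipation structure, Cauchy--Schwarz, the Lojasiewicz--Simon inequality of Proposition~\ref{prop:loja_simon_ineqII}, and the choice of $\sigma$ as the positive root of $\beta\sigma^2+\sigma-1=0$, one obtains $-\ell'(t)\ge C_1\sum_i|\frac{d}{dt}\tilde h_i^j(t)|$ on $I_j$, hence $\sum_i|\tilde h_i^j(r)-\tilde h_i^j(l)|\le C_1^{-1}\ell(l)$ for every $(l,r)\subset I_j$. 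Choosing $j$ large enough that $\tilde\Gamma^j(t_j)$ is $\epsilon$-close to $\Gamma^\infty$ and $\ell(t_j)$ is small, a continuation argument shows $(t_j,+\infty)\subset I_j$; therefore each $h_i$ has finite total variation on $(0,+\infty)$ and converges to a limit $\bar h_i$. As in Theorem~\ref{teo:conver_stationar_sol} the vectors $p_j$ also converge (since $\Gamma^0$, being closed, has two non-parallel normals), and after an inessential translation $\Gamma(t)\overset{K}{\to}\Gamma^\infty$. Convergence of all $h_i$ then gives $\sH^1(S_i(t))\to\sH^1(S_i^\infty)$ and $S_i(t)\overset{K}{\to} S_i^\infty$ for every $i$ (via the affine dependence \eqref{length_changed_hhh} and the fact that the non-collapsing vertices are intersections of converging lines), with $S_i^\infty$ degenerate only if $i\in V$, hence only if $c_i=0$. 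Finally, for each $i$ with $S_i^\infty$ nondegenerate the length $\sH^1(S_i(t))$ is eventually bounded below, so $\int^{+\infty}|h_i'|^2\,dt<+\infty$ and there is a sequence $s_k\nearrow+\infty$ along which $h_i'(s_k)\to0$ for all such $i$ simultaneously; passing to the limit in \eqref{elastic_ode123} at $s_k$ (the terms attached to a degenerating neighbour carry a factor $c_{i\pm1}^2=0$ and drop out, and all remaining lengths stay bounded away from $0$) gives \eqref{gen_statS1} at every nondegenerate $S_i^\infty$.

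The hard part is the handling of the collapsing segments, and it enters at two points. First, one must prove Proposition~\ref{prop:loja_simon_ineqII}, i.e.\ a Lojasiewicz--Simon inequality centred at a \emph{partially collapsed} generalized stationary curve $\Gamma^\infty$: the map $h\mapsto(1/\sH^1(S_i))$ used in Proposition~\ref{prop:loja_simon_ineq} degenerates near the collapse, so one should instead run the subanalyticity argument in the length variables $\sH^1(S_i)$ themselves --- after clearing denominators in \eqref{gen_statS1} so that the relevant function is polynomial --- over a compact region that \emph{includes} the degenerate configurations, and relate $\cF_\alpha(\Gamma)-\cF_\alpha(\Gamma^\infty)$ to the distance to the corresponding zero set in these variables. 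Second, and this is the genuinely new difficulty, in the gradient-flow step the estimate $-\ell'\ge C_1\sum_i|h_i'|$ can no longer rely on $\sH^1(S_i)\ge a_0$ (false for $i\in V$): one must instead show that the dissipation density $|h_i'|^2\sH^1(S_i)$ still controls $|h_i'|$ uniformly for $i\in V$, which hinges on the precise form of the zero-$\norm$-curvature evolution equation \eqref{io0t5a} together with the non-degeneracy of its right-hand side for curves near $\Gamma^\infty$ --- equivalently, one must rule out that the (possibly large) oscillations of such $h_i$ accumulate infinite length as $t\to+\infty$.
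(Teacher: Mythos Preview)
Your overall plan matches the paper's: reduce to $T=0$, extract a subsequential Kuratowski limit $\Gamma^\infty$ as a generalized stationary curve, then run a Lojasiewicz--Simon argument via Proposition~\ref{prop:loja_simon_ineqII} to upgrade to full convergence. The identification of the two hard points is accurate.

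There is, however, a genuine gap in your plan for the second hard point. Your proposed control of the collapsing heights (``$|h_i'|^2\sH^1(S_i)$ controls $|h_i'|$ via \eqref{io0t5a} and non-degeneracy of its right-hand side'') works only for those $i$ with $c_i=0$ and \emph{at most one} of $c_{i-1},c_{i+1}$ nonzero: there the bracket in \eqref{io0t5a} is a single term whose coefficient is bounded away from $0$ (the neighbouring length with $c\ne0$ is bounded below), so indeed $1/\sH^1(S_i)\le C|h_i'|$ and your Cauchy trick goes through. But when $c_i=0$ with \emph{both} $c_{i-1},c_{i+1}\ne0$, admissibility forces $\nu_{S_{i-1}}=\nu_{S_{i+1}}$, hence $\delta_{i-1}=\delta_{i+1}$ and $\sin\theta_{i+1}=-\sin\theta_i$, so the bracket in \eqref{io0t5a} is proportional to $\sH^1(S_{i-1})^{-2}-\sH^1(S_{i+1})^{-2}$ and can vanish (and change sign) along the flow: the non-degeneracy you need simply fails, and $h_i$ may genuinely oscillate. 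The paper isolates this obstruction by introducing the index set $J:=\{i:\,c_i\ne0\text{ or }c_i=0,\ c_{i-1}^2+c_{i+1}^2\le1\}$, states Proposition~\ref{prop:loja_simon_ineqII} with the closeness condition \eqref{maosfb} imposed only for $i\in J$, and runs the continuation argument only for those indices. For $i\notin J$ one has $i\pm1\in J$, so $h_{i-1},h_{i+1}$ already converge; then a case split on whether the limiting parallel lines through $S_{i-1}$ and $S_{i+1}$ coincide decides whether $S_i$ collapses or acquires \emph{a posteriori} a positive lower length bound, after which the weighted Lojasiewicz estimate can be applied to $h_i$ as well.

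A related correction: in the irregular case the Cauchy--Schwarz step does not give $-\ell'\ge C_1\sum_i|h_i'|$ directly (that would need $\sH^1(S_i)\ge a_0$). The paper instead writes $\sum_i|h_i'|^2\sH^1(S_i)=\sum_i|h_i'\sH^1(S_i)|^2/\sH^1(S_i)\ge\frac{1}{n\max_j\sH^1(S_j)}\bigl(\sum_i|h_i'|\sH^1(S_i)\bigr)^2$ and, combining with \eqref{loja_simoII}, obtains the \emph{weighted} inequality $-\ell'\ge C_1\sum_i|h_i'|\,\sH^1(S_i)$; this yields $|h_i(r)-h_i(l)|$-control for $i$ with $c_i\ne0$ (via the length lower bound \eqref{ervertwe}) and is supplemented by the equation trick for the remaining $i\in J$ with $c_i=0$. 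Your alternative suggestion for proving Proposition~\ref{prop:loja_simon_ineqII} by clearing denominators and working in the length variables is viable; the paper instead parametrises by $l_i/\sH^1(S_i)$ with $l_i\in\{0,1\}$ to keep the map real-analytic up to the degenerate boundary.
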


Comparing \eqref{gen_statS1} with the analogous condition of Definition \ref{def:reg_stationar}, we can refer to $\Gamma^\infty$ as a \emph{generalized stationary curve}.

The  remaining  of the section is devoted to the proof of the theorem. We follow the arguments of Theorem \ref{teo:conver_stationar_sol}, carefully inspecting the situations where the lower bound assumption \eqref{min_segments00000} is addressed. Without loss of generality, we assume $T=0$ and $\Gamma^0:=\Gamma(T).$
\smallskip

{\it Step 1: definition of $\Gamma^\infty.$} 
As we observed earlier (see \eqref{ZgEW} and Corollary \ref{cor:segments_nonzero_curva}),
\begin{equation}\label{dfhvvbb}
\cF_\alpha(\Gamma^0) = \cF_\alpha(\Gamma(t)) + \sum_{i=1}^n \int_0^t \frac{|h_i'(s)|^2\sH^1(S_i(s))}{\norm^o(\nu_{S_i^0})}\,ds,
\end{equation}
and hence,
\begin{equation}\label{dusgatdc}
\sH^1(\Gamma(t)) = \sum_{i=1}^n \sH^1(S_i(t)) \le \frac{1}{c_\norm}\cF_\alpha(\Gamma(t)) \le \frac{1}{c_\norm}\cF_\alpha(\Gamma^0),\quad t\ge0,
\end{equation}
and 
\begin{equation*}%\label{imoboim1}
\inf_{t\ge0} \sH^1(S_i(t)) \ge \frac{\alpha c_\norm c_i^2\sH^1 (F_i)^2}{\cF_\alpha(\Gamma^0)},\quad i=1,\ldots,n.
\end{equation*}
Thus, we can choose a sequence $t_k\nearrow +\infty$ such that $h_i'(t_k)\to0$ as $k\to+\infty$ for all $i$ with $c_i\ne0$ and a sequence $(p_k)_k\subset\R^2$ such that 
$0\in p_k+\Gamma(t_k).$ In particular, each $p_k+\Gamma(t_k)$ stays in the disc $D$ of radius $2\cF_\alpha(\Gamma^0)/c_\norm $ (centered at the  origin) and hence, by the Kuratowski compactness of  connected compact sets \cite{Falconer:1985}, up to a not relabelled subsequence, $p_k+\Gamma(t_k) \overset{K}{\to} \Gamma^\infty$ as $k\to+\infty$ for some compact set $\Gamma^\infty\subset D.$
Since each $p_k+\Gamma(t_k)$ is parallel to $\Gamma^0,$ we can readily check that $\Gamma^\infty$ is an admissible polygonal curve, not necessarily parallel to $\Gamma^\infty,$ consisting of a union of $n$ (some of which possibly degenerate) segments $\{S_i\}$ with 
$$
p_k+S_i(t_k)\overset{K}{\to} S_i^\infty \quad\text{and}\quad \sH^1(S_i(t_k))\to \sH^1(S_i^\infty)\quad\text{as}\quad k\to+\infty\quad \text{for any $i=1,\ldots,n.$}
$$
Thus, applying the evolution equation \eqref{elastic_ode123} with $t=t_k$ and letting $k\to+\infty,$ we deduce  
\begin{equation}\label{anatomosha_la}
c_i\sH^1(F_i) + \alpha\Big[\tfrac{c_{i-1}^2\delta_{i-1}}{\sH^1(S_{i-1}^\infty)^2\sin\theta_i}+
\tfrac{c_{i}^2\delta_i[\cot \theta_i+\cot
\theta_{i+1}]}{\sH^1(S_i^\infty)^2}+\tfrac{c_{i+1}^2\delta_{i+1}}{\sH^1(S_{i+1}^\infty)^2\sin\theta_{i+1}}\Big] =
0
\end{equation}
for any $i\in\{1,\ldots,n\}$ with $\sH^1(S_i^\infty)>0.$ Thus, 
$\Gamma^\infty$ is a generalized stationary curve.
\smallskip

{\it Step 2: properties of generalized stationary curves.} 
Let us call any solution $\bar \Gamma=\cup_{i=1}^n\bar S_i$ of \eqref{anatomosha_la} (applied with nondegenerate segments $\bar S_i$) a generalized stationary curve. We define its energy as usual,
$$
\cF_\alpha(\bar \Gamma) = \sum_{i=1}^n \int_{\bar S_i} \norm^o(\nu_{\bar S_i}) (1+\alpha \,[\kappa_{\bar S_i}^\norm]^2)d\sH^1 =
\sum_{i=1}^n \norm^o(\nu_{\bar S_i}) \Big(\sH^1(\bar S_i) + \frac{\alpha c_i^2\sH^1(F_i)^2}{\sH^1(\bar S_i)} \Big)
$$
which is well-defined also for degenerate segments, setting it as zero energy.

Generalized stationary curves $\Gamma^\infty$ have the same energy dissipation property as standard stationary curves (in the sense of Definition \ref{def:reg_stationar}).

\begin{proposition}\label{prop:stationer_gener}
Let $\bar \Gamma:=\cup_{i=1}^n \bar S_i$ be any generalized stationary curve  solving the system \eqref{anatomosha_la} (for instance, $\bar \Gamma=\Gamma^\infty$). Then for any polygonal curve $\Gamma:=\cup_{i=1}^n 
S_i$ parallel to $\bar\Gamma,$
\begin{equation}\label{diff_ener_genene}
\cF_\alpha(\Gamma) - \cF_\alpha(\bar \Gamma) = \alpha \sum_{i=1}^{n} \frac{ c_i^2 \delta_i}{\sH^1(\bar S_i)^2 \sH^1(
S_i)} \,\Big(\sH^1(S_i) - \sH^1(\bar S_i)\Big)^2.
\end{equation}
Moreover, $\sH^1(\bar S_i) = \sH^1(S_i^\infty)$ for all segments with $c_i\ne0,$ where $\Gamma^\infty=\cup_iS_i^\infty$ is the curve  obtained in step 1.
\end{proposition}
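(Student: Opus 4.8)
The plan is to repeat, almost verbatim, the proof of the identity \eqref{diff_energy_bdd_stat} for genuine stationary curves, keeping careful track of the degenerate segments of $\bar\Gamma$ (each of which has $c_i=0$). Write $h_i:=\scalarp{H(\bar S_i,S_i)}{\nu_{\bar S_i}}$ for the signed heights of the parallel curve $\Gamma$, interpreting, at a degenerate $\bar S_i$, the ambient line through $\bar S_i$ as carrying the (fixed, by parallelness) direction $\nu_{\bar S_i}^\perp$. Using the energy expression $\cF_\alpha(\Gamma)=\sum_i\norm^o(\nu_{\bar S_i})\big(\sH^1(S_i)+\alpha c_i^2\sH^1(F_i)^2/\sH^1(S_i)\big)$ -- a degenerate segment carries $c_i=0$, hence no curvature term and, consistently, zero energy -- one obtains
\[
\cF_\alpha(\Gamma)-\cF_\alpha(\bar\Gamma)=\sum_{i=1}^n\norm^o(\nu_{\bar S_i})\big(\sH^1(S_i)-\sH^1(\bar S_i)\big)\Big(1-\tfrac{\alpha c_i^2\sH^1(F_i)^2}{\sH^1(\bar S_i)\,\sH^1(S_i)}\Big).
\]
I would split this sum as $A+B$, where $B$ is exactly the right-hand side of \eqref{diff_ener_genene} (so degenerate segments drop out of $B$, since $c_i=0$) and $A:=\sum_i\norm^o(\nu_{\bar S_i})\big(\sH^1(S_i)-\sH^1(\bar S_i)\big)\big(1-\alpha c_i^2\sH^1(F_i)^2/\sH^1(\bar S_i)^2\big)$.

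The crux is to show $A=0$. Substituting \eqref{length_changed_hhh} -- which remains valid when $\bar S_i$ is degenerate, with $\sH^1(\bar S_i)=0$ and, there, $\cot\theta_i+\cot\theta_{i+1}=0$ -- and then re-indexing the three resulting sums so as to collect the coefficient of each single height $h_j$, exactly as in the proof of Theorem \ref{teo:cl_elastc_gradflow} and using the geometric identity \eqref{ashuc7bnn}, the coefficient of $h_j$ turns out to be precisely minus the left-hand side of \eqref{anatomosha_la} at the index $j$. At a nondegenerate $\bar S_j$ this vanishes by the generalized stationarity hypothesis. At a degenerate $\bar S_j$ one has $c_j=0$ and $\cot\theta_j+\cot\theta_{j+1}=0$, so the coefficient reduces to $-\alpha\big(\tfrac{c_{j-1}^2\delta_{j-1}}{\sH^1(\bar S_{j-1})^2\sin\theta_j}+\tfrac{c_{j+1}^2\delta_{j+1}}{\sH^1(\bar S_{j+1})^2\sin\theta_{j+1}}\big)$, which is again the left-hand side of \eqref{anatomosha_la} at $j$; so one needs \eqref{anatomosha_la} to hold at the degenerate indices as well. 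For the curves to which the proposition is applied, namely $\bar\Gamma=\Gamma^\infty$, this is the extra piece of information one extracts in Step 1 from the energy dissipation \eqref{dfhvvbb}: since $\sum_i\int_0^\infty|h_i'|^2\sH^1(S_i)\,ds<+\infty$, one may refine the choice of the approximating times $t_k\nearrow+\infty$ so that $|h_i'(t_k)|^2\sH^1(S_i(t_k))\to0$ for \emph{every} $i$; feeding \eqref{elastic_ode123} for an index $i$ with $c_i=0$ into this and using $\sH^1(S_i(t_k))\to0$ forces the bracket above to converge to zero, i.e.\ \eqref{anatomosha_la} holds at degenerate indices too. Hence every coefficient of $h_j$ in $A$ vanishes, $A=0$, and \eqref{diff_ener_genene} follows; the same computation localized to a disc $D$ handles a possibly unbounded $\bar\Gamma$, whose half-line segments have $c_1=c_n=0$.

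For the last assertion I would argue as in Remark \ref{rem:station_lengith}: $\Gamma^\infty$ is itself a generalized stationary curve (Step 1) and is parallel to $\bar\Gamma$, so applying \eqref{diff_ener_genene} once to the pair with underlying stationary curve $\bar\Gamma$ and test curve $\Gamma^\infty$, once with the roles exchanged, and adding the two identities yields
\[
0=\alpha\sum_{i=1}^n c_i^2\delta_i\,\big(\sH^1(S_i^\infty)-\sH^1(\bar S_i)\big)^2\Big(\tfrac{1}{\sH^1(\bar S_i)^2\sH^1(S_i^\infty)}+\tfrac{1}{\sH^1(S_i^\infty)^2\sH^1(\bar S_i)}\Big),
\]
whence, every summand being nonnegative, $\sH^1(\bar S_i)=\sH^1(S_i^\infty)$ whenever $c_i\ne0$. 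The main obstacle I anticipate is not the algebra, which is mechanical once the re-indexing is in place, but the bookkeeping at the degenerate segments -- in particular making sure that \eqref{length_changed_hhh}, the signed heights, and \eqref{anatomosha_la} all make sense (and that the latter genuinely holds) at those indices, which is exactly where the $L^1$-summability argument enters.
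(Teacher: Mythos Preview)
Your argument is correct and follows the paper's own (one--paragraph) proof, which simply says the computation for \eqref{diff_energy_bdd_stat} carries over because ``the degeneracy of segments does not create a problem here because of the presence of $c_i$ in the numerator,'' and then obtains the second claim by applying the identity with $\Gamma=\Gamma^\infty$ and $\Gamma=\bar\Gamma$ to get $\cF_\alpha(\Gamma^\infty)=\cF_\alpha(\bar\Gamma)$ (equivalent to your symmetrized version). You have in fact been more careful than the paper on one point: the paper's remark only addresses well--definedness of the individual terms, whereas you correctly observe that the coefficient of $h_j$ at a \emph{degenerate} index $j$ involves $c_{j\pm1}^2/\sH^1(\bar S_{j\pm1})^2$ and is not killed by $c_j=0$ alone; your refinement of the sequence $t_k$ so that $|h_i'(t_k)|^2\sH^1(S_i(t_k))\to0$ for every $i$ --- hence \eqref{anatomosha_la} also at the degenerate indices of $\Gamma^\infty$ --- supplies the missing piece for the curve actually used in the sequel.
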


The proof runs along the same lines of Proposition \ref{prop:stationer_gener}; the degeneracy of segments does not create a problem here because of the presence of $c_i$ in the numerator. Moreover, in obtaining the identity \eqref{diff_ener_genene}, we can use both $\bar \Gamma$ and $\Gamma^\infty$ in place of $\Gamma,$  which yields $\cF_\alpha(\Gamma^\infty) = \cF_\alpha(\bar \Gamma),$ and thus $\sH^1(\bar S_i) = \sH^1(S_i^\infty)$ whenever $c_i\ne0.$
\smallskip

{\it Step 3: a Lojasiewicz-Simon-type  inequality.} In this step we establish an analogue of Proposition \ref{prop:loja_simon_ineq}. 
To this aim, let $\bar \Gamma:=\cup_{i=1}^n\bar S_i$ be any generalized stationary
curve,  solving \eqref{anatomosha_la}. For any admissible curve $\Gamma=\cup_{i=1}^nS_i,$ parallel to $\bar\Gamma,$ let us write 
$
H(S_i,\bar S_i)
$
to define the distance vector from $S_i$ to $\bar S_i$ in case $\bar S_i$ is nondegenerate, and the distance vector from $S_i$ to the straight line $\ell_i,$ passing through $\bar S_i$ and parallel to $S_i$ in case $\bar S_i$ is degenerate. We also set as usual $h_i:=\scalarp{H(S_i,\bar S_i)}{\nu_{S_i}}.$

\begin{proposition}[Lojasiewicz-Simon-type inequality, II]\label{prop:loja_simon_ineqII}
Let 
$\bar \Gamma:=\cup_{i=1}^n\bar S_i$ be a generalized stationary curve and fix any $\epsilon>0$ satisfying
$$
\epsilon< 2^{-10}\min_{\sH^1(\bar S_i)>0} \frac{\sH^1(\bar S_i)}{\frac{1}{|\sin\theta_i|} + |\cot\theta_i+\cot\theta_{i+1}|+\frac{1}{|\sin\theta_{i+1}|}}.
$$ 
There exist constants $C,\beta>0$ such that, given any polygonal curve $\Gamma:=\cup_{i=1}^n S_i,$ parallel to $\bar\Gamma$ with
\begin{equation}\label{maosfb}
\min_{c_i\ne0} \sH^1(S_i) \ge \epsilon\quad\text{and}\quad 
\max_{i\in J} |H(S_i,  \bar S_i)| < \epsilon,
\end{equation}
where 
$$
J:=\{i\in\{1,\ldots,n\}:\,\,  \text{either $c_i\ne 0$ or $c_i=0$ and $c_{i-1}^2+c_{i+1}^2\le 1$}\},
$$
one has 
\begin{multline}\label{loja_simoII}
|\cF_\alpha(\Gamma) - \cF_\alpha(\bar \Gamma)|^\beta
\le C \Big(\alpha \max_{i=1,\ldots,n} \, c_i^2\delta_i\,\sH^1(S_i)\Big)^\beta \times\\
\times\sum_{\sH^1(\bar S_i)>0} \Big|c_i\sH^1(F_i)+ \alpha \Big(\frac{c_{i-1}^2\delta_{i-1}}{\sH^1( S_{i-1})^2\sin\theta_i}+
\frac{c_{i}^2\delta_i[\cot \theta_i+\cot \theta_{i+1}]}{\sH^1( S_{i})^2}+\frac{c_{i+1}^2\delta_{i+1}}{\sH^1(
S_{i+1})^2\sin\theta_{i+1}}\Big)\Big|.
\end{multline}
\end{proposition}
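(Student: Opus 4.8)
The plan is to follow the scheme of the proof of Proposition~\ref{prop:loja_simon_ineq}, with the role of \eqref{diff_energy_bdd_stat} and Remark~\ref{rem:station_lengith} now played by Proposition~\ref{prop:stationer_gener}, and with the Lojasiewicz variables restricted to the segments of $\bar\Gamma$ carrying nonzero $\norm$-curvature. Set $I^+:=\{i : c_i\ne0\}$; then $I^+\subseteq J$ and $\sH^1(\bar S_i)>0$ for $i\in I^+$, so $\bar\Gamma$ is nondegenerate along $I^+$. The first point is that both sides of \eqref{loja_simoII} depend on $\Gamma$ only through $x:=(1/\sH^1(S_i))_{i\in I^+}$. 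For the left-hand side this is Proposition~\ref{prop:stationer_gener}: since only segments with $c_i\ne0$ (hence nondegenerate in $\bar\Gamma$) contribute to \eqref{diff_ener_genene}, writing $\bar x_i:=1/\sH^1(\bar S_i)$ one gets $\cF_\alpha(\Gamma)-\cF_\alpha(\bar\Gamma)=\alpha\sum_{i\in I^+}c_i^2\delta_i\,\sH^1(S_i)\,(x_i-\bar x_i)^2$, whence
$$|\cF_\alpha(\Gamma)-\cF_\alpha(\bar\Gamma)|\le\alpha\Big(\max_{1\le i\le n}c_i^2\delta_i\,\sH^1(S_i)\Big)\,|x-\bar x|^2.$$
For the right-hand side, the factors $c_{i-1}^2,c_i^2,c_{i+1}^2$ annihilate every occurrence of $\sH^1(S_j)$ with $c_j=0$, so each residual of index $i$ is a polynomial $P_i(x)$ in the variables $(x_j)_{j\in I^+}$, and the sum in \eqref{loja_simoII} equals $f(x):=\sum_{\sH^1(\bar S_i)>0}|P_i(x)|$.

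Next I would set up the subanalytic framework. Let $O$ be the set of vectors $x=(1/\sH^1(S_i))_{i\in I^+}$ coming from polygonal curves $\Gamma$ parallel to $\bar\Gamma$ and satisfying \eqref{maosfb}. By the first inequality in \eqref{maosfb}, $O\subseteq(0,1/\epsilon]^{|I^+|}$ is bounded; parametrising $\Gamma$ by its signed heights $(h_j)_{j=1}^n$, the lengths $\sH^1(S_j)$ are affine in $h$, and the constraints carving out $O$ ($|h_j|<\epsilon$ for $j\in J$, $\sH^1(S_j)\ge\epsilon$ for $j\in I^+$, $\sH^1(S_j)>0$ for all $j$, together with $x_i\sH^1(S_i)=1$) are semialgebraic, so $O$, being a projection, is subanalytic and $\cl O\subseteq[0,1/\epsilon]^{|I^+|}$ is compact subanalytic. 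The function $f|_{\cl O}$ is continuous and subanalytic, being a finite sum of absolute values of polynomials. Since $\epsilon$ obeys the smallness bound in the statement, $\bar\Gamma$ itself satisfies \eqref{maosfb}, hence $\bar x\in O$; and $f(\bar x)=0$ by the generalized stationarity equations \eqref{anatomosha_la}. The Lojasiewicz inequality for subanalytic functions (\cite[Section~IV.9]{Loja:1995}) then provides $C',\beta'>0$ with $f(x)\ge C'\,\dist(x,Z)^{\beta'}$ for $x\in O$, where $Z:=\{z\in\cl O : f(z)=0\}$.

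The crucial, and most delicate, step is to prove that $Z=\{\bar x\}$. Given $z\in Z$, I would pick $x^k\to z$ realised by curves $\Gamma^k$ parallel to $\bar\Gamma$ and satisfying \eqref{maosfb}, and pass to a subsequence along which the bounded heights $(h_j^k)_{j\in J}$ converge. The segments outside $J$ are exactly the zero-$\norm$-curvature segments squeezed between two nonzero-$\norm$-curvature ones, for which admissibility forces $\theta_{i-1}+\theta_i=2\pi$, in particular $\cot\theta_{i-1}+\cot\theta_i=0$, so their (a priori free) heights do not enter the lengths of the neighbouring segments. One then has to rule out escape of these free heights to infinity: by the length formula \eqref{length_changed_hhh} such an escape would propagate a chain of diverging heights around the finite cycle of segments, which cannot be sustained together with the positivity $\sH^1(S_j)>0$ and the lower bounds $\sH^1(S_j)\ge\epsilon$, $j\in I^+$. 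Hence $\Gamma^k$ converges to an admissible polygonal curve $\Gamma'$ (possibly with some zero-length segments, all having $c_i=0$), parallel to $\bar\Gamma$ in the sense of Proposition~\ref{prop:stationer_gener}, with $\sH^1(S_i')=1/z_i$ for $i\in I^+$; and $f(z)=0$ exhibits $\Gamma'$ as a generalized stationary curve. Applying Proposition~\ref{prop:stationer_gener} to the pair $(\bar\Gamma,\Gamma')$ forces $\sH^1(S_i')=\sH^1(\bar S_i)$ whenever $c_i\ne0$, i.e.\ $z_i=\bar x_i$ for all $i\in I^+$, so $z=\bar x$. I expect this degeneration/escape analysis — making precise that no pathological limit can occur, which is where both the definition of $J$ and the generalized stationarity of $\bar\Gamma$ are used — to be the main obstacle. (In the intended application, namely the continuation of the proof of Theorem~\ref{teo:long_time_general}, one moreover has a uniform upper bound $\sH^1(\Gamma)\le\cF_\alpha(\Gamma^0)/c_\norm$ from \eqref{dusgatdc}, which trivialises this step.)

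Finally, since $Z=\{\bar x\}$, we have $\dist(x,Z)=|x-\bar x|$, so combining the two estimates above,
$$|\cF_\alpha(\Gamma)-\cF_\alpha(\bar\Gamma)|\le\alpha\Big(\max_{1\le i\le n}c_i^2\delta_i\,\sH^1(S_i)\Big)\dist(x,Z)^2\le\alpha\Big(\max_{1\le i\le n}c_i^2\delta_i\,\sH^1(S_i)\Big)\Big(\frac{f(x)}{C'}\Big)^{2/\beta'},$$
and raising to the power $\beta'/2$ yields \eqref{loja_simoII} with $\beta:=\beta'/2>0$ and $C:=1/C'>0$.
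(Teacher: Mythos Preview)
Your proposal follows essentially the same scheme as the paper --- reduce to a finite-dimensional Lojasiewicz inequality on a bounded subanalytic set, then identify the zero set via generalized stationarity and Proposition~\ref{prop:stationer_gener}. The bookkeeping difference (you project to coordinates indexed by $I^+=\{i:c_i\ne0\}$, while the paper keeps all coordinates $\{i:\sH^1(\bar S_i)>0\}$ with placeholders $l_i$) is harmless, since you correctly observe that both sides of \eqref{loja_simoII} factor through this projection.

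There is, however, a concrete error in your compactness step. For $i\notin J$ you assert that the free height $h_i$ ``does not enter the lengths of the neighbouring segments''. This is false. The identity $\cot\theta_i+\cot\theta_{i+1}=0$ (which does follow from $c_i=0$ via admissibility) only removes $h_i$ from the formula for $\sH^1(S_i)$ itself; by \eqref{length_changed_hhh} the lengths $\sH^1(S_{i-1})$ and $\sH^1(S_{i+1})$ still contain the terms $h_i/\sin\theta_i$ and $h_i/\sin\theta_{i+1}=-h_i/\sin\theta_i$ respectively. So your subsequent propagation sketch starts from a wrong premise. The paper handles this differently and more directly: since $\bar\Gamma$ is a bounded closed curve and $\Gamma$ is parallel to it with $|h_j|<\epsilon$ for $j\in J$, the remaining heights are forced into $(-2\sH^1(\bar\Gamma),2\sH^1(\bar\Gamma))$; hence the height domain $U$ is relatively compact, the map $g$ extends real-analytically to $\cl U$, and for any $z\in\{f=0\}$ one can extract a limit curve $\tilde\Gamma$ from approximants. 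Once that limit curve is available, your remaining argument --- $\tilde\Gamma$ is generalized stationary, and Proposition~\ref{prop:stationer_gener} pins down $z_i=\bar x_i$ for all $i\in I^+$ --- matches the paper verbatim.
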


\begin{proof}
Let $U$ be the collection of all $h:=(h_1,\ldots,h_n)$ with $h_i\in(-\epsilon,\epsilon),$ $i\in J,$ for which there exists a unique associated admissible polygonal curve $\Gamma,$ parallel to $\bar \Gamma,$ satisfying 
\begin{equation*}
\min_{c_i\ne0} \sH^1(S_i) \ge\epsilon\quad\text{and}\quad  h_i=\scalarp{H(S_i,\bar S_i)}{\nu_{S_i}}\quad \text{for all $i=1,\ldots,n.$}
\end{equation*}
Notice that $U$ is a bounded open set. 
Indeed, the boundedness of $\bar\Gamma$ and admissibility and parallelness 
conditions force the coordinates $h_i$ of $h\in U$ with $i\notin J$ 
to belong to the interval $(-2\sH^1(\bar\Gamma),2\sH^1(\bar\Gamma)).$ 
Moreover, 
for any $\tilde h\in U$ and associated polygonal curve $\tilde \Gamma,$ 
applying Lemma \ref{lem:construc_paral_curve} we can find  
$\eta>0$ such that for any $h\in\R^n$ with $|\tilde h-h|<\eta$ 
(i.e., $h\in B_\eta(\tilde h)$) there exists a unique polygonal 
curve $\Gamma$ parallel to $\tilde \Gamma$ (and hence to $\bar\Gamma$), satisfying $h_i-\tilde h_i:=\scalarp{H(S_i,\tilde S_i)}{\nu_{S_i}}.$ Since $\nu_{S_i} = \nu_{\tilde S_i}$ for any $i$ and $\tilde h\in U,$ it follows $h_i=\scalarp{H(S_i,\bar S_i)}{\nu_{S_i}}.$ Moreover, since $\tilde h\in (-\epsilon,\epsilon)^n,$ possibly decreasing $\eta,$ we may assume $h\in (-\epsilon,\epsilon)^n,$ and hence, $h\in U.$ Thus, $B_{\eta}(\tilde h)\subset U,$ i.e., $U$ is open. 

Consider the real-analytic map $g:=(g_1,\ldots,g_n)$ defined in $U$ as
$$
g_i(h_1,\ldots,h_n) := \frac{l_i}{\sH^1(S_i)} = \frac{l_i}{ \sH^1(\bar S_i) -
\Big(\frac{h_{i-1}}{\sin\theta_i}+h_i[\cot\theta_i+\cot\theta_{i+1}]+\frac{h_{i+1}}{\sin\theta_{i+1}}\Big) },\quad
h=(h_1,\ldots,h_n)\in U,
$$
where $l_i=1$ if $\sH^1(\bar S_i)>0$ and $l_i=0$ if $\sH^1(\bar S_i)=0.$ 

Let us show that $g$ extends real analytically to $\cl{U}.$ Indeed, fix any $h\in\cl{U}$ and consider any sequence $U\ni h^k \to h.$ Now if we take an index $i$ with $c_i\ne0,$ then by the definition of $U,$ 
$$
\sH^1(\bar S_i) - \Big(
\frac{h_{i-1}^k}{\sin\theta_i} 
+
h_i^k[\cot\theta_i+\cot\theta_{i+1}]
+
\frac{h_{i+1}^k}{\sin\theta_{i+1}}\Big) =
\sH^1(S_i^k)\ge \epsilon, 
$$
where $\{S_j^k\}$ are the segments of the curve $\Gamma^k,$ associated to $h^k.$ Thus, we can uniquely extend $g_i$ real-analytically to a small neighborhood of $h.$ On the other hand, let $i$ be such that $c_i=0$ with $\sH^1(\bar S_i)>0.$ By admissibility, $S_i^k$ has zero $\norm$-curvature and the segments $S_{i-1}^k$ and $S_{i+1}^k$ are parallel. Since $\bar\Gamma$ is admissible, $|H(\bar S_{i-1},S_{i+1} )| \ge \tilde c \sH^1(\bar S_i)$ for some $\tilde c>0$ depending only on the  angles $\{\theta_j\}.$ Moreover, by the definition of $J,$  both $i-1,i+1\in J.$ In particular, by the second assumption in \eqref{maosfb},   $h_{i-1},h_{i+1}\in(-\epsilon,\epsilon)$ and therefore, recalling the smallness of $\epsilon$ depending only on the lengths of the nondegenerate segments of $\bar\Gamma$ and the angles of $\Gamma^0,$ we conclude 
$$
\sH^1(S_i^k) \ge |H(S_{i-1}^k,S_{i+1}^k)| \ge |H(\bar S_{i-1},\bar S_{i+1})| - |h_{i-1}| - |h_{i+1}|  \ge \tilde c \sH^1(\bar S_i) - |h_{i-1}| - |h_{i+1}| \ge \epsilon.
$$
Thus, again $g_i$ is extended real-analytically to a neighborhood of $h.$
Finally, in case $c_i=0$ and $\sH^1(\bar S_i)=0,$ $g_i\equiv0,$ which is real-analytic in $\R^n.$

Notice that for any $h\in\cl{U}$ we can define a unique admissible curve $\Gamma:=\cup_{i=1}^nS_i$ (for instance, defined by a Kuratowski limit of the curves $\Gamma^k,$ associated to an approximating sequence $U\ni h^k\to h$) satisfying 
$$
h_i=\scalarp{H(S_i,\bar S_i)}{\nu_{S_i}}:=\lim_{k\to+\infty} \scalarp{H(S_i^k,\bar S_i)}{\nu_{S_i^k}}
$$
with $\nu_{S_i} = \nu_{S_i^k}$ for all $i$ and $k.$ In particular, $\bar h=(0,\ldots,0),$ associated to our $\bar\Gamma,$ belongs to the closure of $U.$

Since $U$ is relatively compact, by \cite[Section IV.2]{Loja:1995}, both $O:=g(U)$ and $\cl{O}=g(\cl{U})$ are subanalytic bounded sets. 
Now, as in the proof of Proposition \ref{prop:loja_simon_ineq}, consider the continuous function $f:\cl{O}\to\R$ of $n$ variables  
$$
f(x_1,\ldots,x_n) := \sum_{i=1}^n l_i \Big|c_i\sH^1(F_i)+ \alpha \Big(\tfrac{c_{i-1}^2\delta_{i-1}}{\sin\theta_i} x_{i-1}^2
+ c_{i}^2\delta_i[\cot \theta_i+\cot \theta_{i+1}]\,x_i^2+\tfrac{
c_{i+1}^2\delta_{i+1}}{\sin\theta_{i+1}}\,x_{i+1}^2\Big)\Big|,
$$
where as usual $x_0:=x_n$ and $x_{n+1}:=x_1.$ Being a sum of  absolute values of real-analytic functions, $f$
is subanalytic. Thus, we can apply the Lojasiewicz inequality in \cite[Section IV.9]{Loja:1995} to find
positive constants $\beta'$ and $C'$ (depending on $O$ and thus, on $\epsilon$) such that
\begin{equation}\label{loja_rengeo}
|f(x)| \ge C' \dist(x,\{f=0\})^{\beta'}.
\end{equation}
Consider any $z=(z_1,\ldots,z_n)\in \{f=0\}$ and let $z^k\in O$ be such that $z^k\to z$ as $k\to+\infty.$ For each $k\ge1$ take  $h^k:=(h_1^k,\ldots,h_n^k)\in U$ such that $g(h^k) = z^k.$ By relative compactness of $U,$ up to a not relabeled subsequence, $h^k\to \tilde h\in \cl{U}.$ Clearly, $z=g(\tilde h)$ and as we
observed earlier, we can find a unique associated polygonal curve $\tilde \Gamma:=\cup_{i=1}^n\tilde S_i$ satisfying $\tilde h_i= \scalarp{H(\tilde S_i,\bar S_i)}{\nu_{\tilde S_i}}$ 
and $\sH^1(\tilde S_i) =
\frac{1}{g_i(\tilde h)} = \frac{1}{z_i}$ whenever $\sH^1(\bar S_i)>0,$ i.e., $l_i=1.$
The last relation, the definition of $f$ and the equation $f(z)=0$ imply that $(\frac{l_1}{\sH^1 (\tilde S_1)},\ldots,\frac{l_n}{\sH^1(\tilde S_n)})$ satisfy \eqref{anatomosha_la} and thus, $\tilde \Gamma$ is a generalized 
stationary curve. 
Since $\bar \Gamma$ is also stationary, by Proposition \ref{prop:stationer_gener},  
\begin{equation}\label{iu78fh7}
\sH^1(\bar S_i) = \sH^1(\tilde S_i) = \frac{1}{z_i} \quad\text{whenever $i\in\{1,\ldots,n\}$ with $c_i\ne0.$} 
\end{equation}

Now, fix any $h\in U$ and associated $\Gamma,$ parallel to $\Gamma^0.$ Clearly, $\Gamma$ satisfies \eqref{maosfb}. By \eqref{diff_ener_genene}, 
\begin{equation}\label{oakdu7bn}
|\cF_\alpha(\Gamma) - \cF_\alpha(\bar \Gamma)| = \alpha
\sum_{i=1}^n  c_i^2 \delta_i\, \sH^1(S_i) \,\Big(\tfrac{1}{\sH^1(\bar S_i)} - \tfrac{1}{\sH^1(S_i)}\Big)^2 \le \alpha  \max_{1\le i\le n} c_i^2 \delta_i\, \sH^1(S_i)
\sum_{c_i\ne 0}\Big(\tfrac{1}{\sH^1(\bar S_i)} - \tfrac{1}{\sH^1(S_i)}\Big)^2.
\end{equation}
Set 
$
x:=(\frac{l_1}{\sH^1(S_1)},\ldots,\frac{l_n}{\sH^1(S_n)});
$
by the definition of $g,$ one has $x\in O.$ Let $\bar z\in \{f=0\}$ be such that
$\dist(x,\{f=0\}) = |x-\bar z|.$ 
As we observed above (see also \eqref{iu78fh7}), $z_i = \frac{1}{\sH^1(\bar S_i)}$ whenever $c_i\ne0.$ Thus,
we can represent and further estimate \eqref{oakdu7bn} as
\begin{equation*}
|\cF_\alpha(\Gamma) - \cF_\alpha(\bar \Gamma)| \le  \alpha \max_{1\le i\le n} c_i^2 \delta_i\, \sH^1(S_i)
\sum_{c_i\ne 0}|z_i-x_i|^2  \le \alpha
\max_{1\le i\le n}  c_i^2 \delta_i\sH^1(S_i)\,\dist(x,\{f=0\})^2.
\end{equation*}
Now recalling \eqref{loja_rengeo} we deduce 
$$
|\cF_\alpha(\Gamma) - \cF_\alpha(\bar \Gamma)|^{\frac{\beta'}{2}} \le \Big(\alpha \max_{1\le i\le n}  c_i^2
\delta_i\sH^1(S_i)\Big)^{\frac{\beta'}{2}}\, \frac{|f(x)|}{C'},
$$
which is \eqref{loja_simo_tengis} with $\beta=\beta'/2>0$ and $C=1/C'>0.$
This concludes the proof of Proposition \ref{prop:loja_simon_ineqII}.
\end{proof}

{\it Step 4: conclusion of the proof of Theorem \ref{teo:long_time_general}.} 
We follow the arguments of Theorem \ref{teo:conver_stationar_sol}, but some care is required as we do not have uniform lower bound of \eqref{min_segments00000}. 
For any $k$ let 
$$
\tilde \Gamma^k(t):=p_k+\Gamma(t)\quad\text{and}\quad 
\tilde h_i^k := \scalarp{H(S_i^0,\tilde S_i^k(t))}{\nu_{S_i^0}},\quad i=1,\ldots,n.
$$ 
Clearly,  $\tilde h_i^k = h_i + \scalarp{p_k}{\nu_{S_i^0}}$ also solves the same evolution equation \eqref{elastic_ode123} as $h_i.$ 

Let $C,\epsilon,\beta>0$ and the set $J$ be given by Proposition
\ref{prop:loja_simon_ineqII} applied with $\bar \Gamma:=\Gamma^\infty.$ 
There is no loss of generality in assuming
\begin{equation}\label{ervertwe} 
\frac{1}{c_\norm}\,\cF_\alpha(\Gamma^0)
\sH^1(\tilde S_i^k(t)) = \sH^1(S_i^k(t)) \ge \frac{\alpha c_\norm c_i^2\sH^1(F_i)^2}{\cF_\alpha(\Gamma^0)} >\epsilon\quad\text{for any $t\ge0$}
\end{equation}
for any $i$ with $c_i\ne0.$ 
For any $k\ge1$ let $I_k\subset(0,+\infty)$ be the set of all $t$ satisfying 
$$
\max_{i\in J}\,|H(\tilde S_i^k(t),S_i^\infty)|<\epsilon.
$$
By the definition of $\Gamma^\infty,$ the set $I_k$ contains $t_k$ for all  sufficiently large $k.$ As $\tilde h_i^k$ is continuous, $I_k$ is open. 

For $\sigma>0$ satisfying $\sigma^2\beta+\sigma=1,$ consider the function
$$
\ell(t):=|\cF_\alpha(\tilde \Gamma^k(t)) - \cF_\alpha(\Gamma^\infty)|^{\sigma},\quad t\in I_k.
$$
Note that by the Lojasiewicz-Simon inequality \eqref{loja_simo_tengis}, $\ell(t_k)\to0$ as $k\to+\infty. $
Moreover, by the energy dissipation equality \eqref{dfhvvbb} applied with $\{\tilde \Gamma^k(t)\}$ we have 
\begin{align*}
\ell'(t) = & -\sigma 
\ell(t)^{-\frac{\sigma-1}{\sigma}}\,\sum_{i=1}^n\Big |\frac{d}{dt}{\tilde h_i^k}(t)\Big|^2\sH^1(\tilde S_i^k(t)).
\end{align*}
By obvious estimates,
$$
\sum_{i=1}^n\Big |\frac{d}{dt}{\tilde h_i^k}\Big|^2\sH^1(\tilde S_i^k) = 
\sum_{i=1}^n\Big |\frac{d}{dt}{\tilde h_i^k}\sH^1(\tilde S_i^k)\Big|^2\tfrac{1}{\sH^1(\tilde S_i^k)} 
\ge \frac{1}{n\,\max_i \sH^1(\tilde S_i^k)} \Big(\sum_{i=1} ^n \Big|\frac{d}{dt}{\tilde h_i^k}\Big|\sH^1(\tilde S_i^k)\Big)^2
$$
and by \eqref{elastic_ode123} applied with $\tilde h_i^k,$ the relations \eqref{ervertwe}  and the Lojasiewicz-Simon inequality \eqref{loja_simoII},
$$
C\Big(\max_{1\le i\le n} \alpha c_i^2 \delta_i \sH^1(\tilde S_i^k(t))\Big)^\beta  \sum_{i=1}^n \Big|\frac{d}{dt}\tilde h_i^k(t)\Big|\sH^1(\tilde S_i^k(t)) \ge |\cF_\alpha(\tilde \Gamma^k(t)) - \cF_\alpha(\Gamma^\infty)|^\beta = \ell(t)^{\sigma\beta}. 
$$
Thus, using \eqref{dusgatdc} as $\sH^1(S_i(t)) \le \cF_\alpha(\Gamma^0)/c_\norm$ and the definition of $\sigma$ we conclude 
\begin{equation}\label{domashniy_kaputt}
-\ell'(t) \ge C_1\sum_{i=1}^n \Big|\frac{d}{dt}\tilde h_i^k(t)\Big|\sH^1(\tilde S_i^k(t)),\quad t\in I_k,
\end{equation}
for some constant $C_1>0$ depending only on $\norm,$ $n,$ $\alpha,$ $\cF_\alpha(\Gamma^0)$ and $\sigma.$

Fix  $\gamma>0$ (to be chosen shortly) and $\eta\in(0,\gamma\epsilon),$ 
and let $k\ge1$ be so large that 
\begin{equation*}%\label{isu7vb}
\max_{1\le i\le n} |H(\tilde S_i^k(t_k),S_i^\infty)|<\eta \quad\text{and}\quad |\cF_\alpha(\tilde
\Gamma^k(t_k)) - \cF_\alpha(\Gamma^\infty)|^{\sigma}< \eta.
\end{equation*}
Let $\bar r>t_k$ be the supremum of all $r>t_k$ such that $(t_k,r)\subset I_k.$ We claim that $\bar r=+\infty.$  Indeed, if $\bar r<+\infty,$ by the continuity of $t\mapsto
|H(S_i^\infty,\tilde S_i^j(t))|,$ we would have
\begin{equation}\label{opo8jte5}
|H(\tilde S_i^k(\bar r), S_i^\infty)| = \epsilon\quad\text{for some $i\in J.$}
\end{equation}
Thus, from \eqref{domashniy_kaputt} and \eqref{ervertwe} we get 
$$
|\tilde h_i^k(\bar r) - \tilde h_i^k(t_k)| \le \int_{t_k}^{\bar r} \Big|\frac{d}{dt} \tilde h_i^k\Big|dt \le - \frac{1}{C_1\epsilon}  \int_{t_k}^{\bar r} \ell'(t)dt \le \frac{|\cF_\alpha(\tilde \Gamma^k(t_k)) - \cF_\alpha(\Gamma^\infty)|^\sigma}{C_1\epsilon } <\frac{\eta}{C_1\epsilon}.
$$
On the other hand, by the choice of $\eta$ and the relation \eqref{opo8jte5}  
\begin{equation}\label{alles_hofer_alles}
|H(\tilde S_i^k(\bar r),S_i^\infty)| \le |H( \tilde S_i^k(t_k), S_i^\infty)| + |H(\tilde S_i^k(t_k), \tilde
S_i^k(\bar r))| <\eta + |\tilde h_i^k(t_k) - \tilde h_i^k(\bar r)|< \Big(1+\tfrac{1}{C_1\epsilon}\Big)\eta<\epsilon 
\end{equation}
provided for instance $\gamma<(1+\tfrac{1}{C_1\epsilon})^{-1}.$ 

On the other hand, if $c_i=0$ with $c_{i-1}^2+c_{i+1}^2\le1$ (i.e., either $c_{i-1}=c_{i+1}=0$ or $c_{i-1}=0\ne c_{i+1}$ or $c_{i-1}\ne0=c_{i+1}$), then by the evolution equation \eqref{elastic_ode123}, $\tilde h_i^k$ satisfies 
$$
\frac{d}{dt}\tilde h_i^k = 
\begin{cases}
0 &  \text{if $c_{i-1}=c_{i+1+0},$}\\
-\frac{\alpha c_{i-1}\delta_{i-1}}{\sH^1(\tilde S_i^k) \sH^1 (\tilde S_{i-1}^k)^2\sin\theta_i} & \text{if $c_{i+1}=0\ne c_{i-1},$} \\
-\frac{\alpha c_{i+1}\delta_{i+1}}{\sH^1(\tilde S_i^k) \sH^1 (\tilde S_{i+1}^k)^2\sin\theta_{i+1}} & \text{if $c_{i-1}=0\ne c_{i+1}.$} 
\end{cases}
$$
Thus, either $\tilde h_i^k\equiv0,$ or recalling \eqref{ervertwe} we conclude 
\begin{equation}\label{ahcy6vbn}
\frac{1}{\sH^1(\tilde S_i^k)} \le \tilde C_2 \Big|\frac{d}{dt}\tilde h_i^k\Big|
\end{equation}
for some $\tilde C_2>0$ depending only on $\alpha,$ $\norm$ and  $\cF_\alpha(\Gamma^0).$
Then by Cauchy inequality and \eqref{ahcy6vbn},
$$
\Big|\frac{d}{dt}\tilde h_i^k\Big| \le \tilde C_2\Big|\frac{d}{dt}\tilde h_i^k\Big|^2 \sH^1(\tilde S_i^k) + \frac{1}{4\tilde C_2\sH^1(\tilde S_i^k)} \le \tilde C_2\Big|\frac{d}{dt}\tilde h_i^k\Big|^2 \sH^1(\tilde S_i^k) + \frac{1}{4}\Big|\frac{d}{dt}\tilde h_i^k\Big|.
$$
Therefore, 
\begin{equation}\label{zertifikat}
|\tilde h_i^k(\bar r) - \tilde h_i^k(t_k) | \le \int_{t_k}^{\bar r} \Big|\frac{d}{dt}\tilde h_i^k\Big| \le \frac{4\tilde C_2}3 \int_{t_k}^{+\infty} \Big|\frac{d}{dt}\tilde h_i^k\Big|^2 \sH^1(\tilde S_i^k)\,ds.
\end{equation}
Since $\tilde \Gamma(t)$ is a constant  translation of $\Gamma(t),$ recalling the energy dissipation equality \eqref{dfhvvbb} we conclude
$$
\int_{t_k}^{+\infty} \Big|\frac{d}{dt}\tilde h_i^k\Big|^2 \sH^1(\tilde S_i^k)\,ds = \int_{t_k}^{+\infty} \Big|\frac{d}{dt} h_i\Big|^2 \sH^1(S_i)\,ds <\eta
$$
for all large enough $k,$ depending only on $\eta.$ Then for such $k,$
\eqref{zertifikat} implies 
$$
|\tilde h_i^k(\bar r) - \tilde h_i^k(t_k) | \le \frac{4\tilde C_2}{3}\eta,
$$
and thus, as in \eqref{alles_hofer_alles} we get 
$$
|H(\tilde S_i^k(\bar r),S_i^\infty)| < \eta + |\tilde h_i^k(t_k) - \tilde h_i^k(\bar r)|< \Big(1+\tfrac{4\tilde C_2}{3}\Big)\eta<\epsilon 
$$
provided for instance $\gamma<(1+\tfrac{4\tilde C_2}{3})^{-1}.$  These contradictions imply that \eqref{opo8jte5} is impossible, and hence $\bar r=+\infty.$

These observations show that 
in view of \eqref{hstdzfgg} and \eqref{fahstdzvb} as well as the choice of $\eta$ and $k,$ for any $t>t_k$ we have
\begin{equation*}
\sum_{i\in J}\, |h_i(t) -  h_i(t_k)| = \sum_{i\in J}|\tilde h_i^k(t) - \tilde h_i^k(t_k)| \le \eta.
\end{equation*}
This estimate shows that the families $\{h_i(t)\}_{t>0},$ $i\in J,$ are fundamental as $t\to+\infty.$ Hence, there exists $\bar h_i:=\lim_{t\to+\infty} h_i(t)$ for any $i\in J.$
In particular, applying \eqref{hstdzfgg} at $t=t_k$ and letting $k\to+\infty$ we find that the limit of
$\scalarp{p_k}{\nu_{S_i^0}}$ exists for all $i\in J.$ Since $\Gamma^0$ is closed, it has at least two sides with nonzero $\norm$-curvature and nonparallel normals. Thus, the sequence $p_k$ also converges to some $p^\infty$ as $k\to+\infty.$ Finally, we show that
\begin{equation*}%\label{prezid_stoycompa}
\Gamma(t)\overset{K}{\to} -p^\infty + \Gamma^\infty\quad \text{as}\quad t\to+\infty. 
\end{equation*}
Indeed, for any $i\in J$ and $t>t_k,$
\begin{multline*}
|H(-p^\infty + S_i^\infty, S_i(t))| \le |H(-p_k+S_i^\infty,-p^\infty+S_i^\infty| + |H(S_i^\infty, p_k+S_i(t_k))|
+ |H(p_k+S_i(t_k), p_k+S_i(t))| \\
\le |p_k-p^\infty| + |H(S_i^\infty, p_k+S_i(t_k))| + |h_i(t_k)-\bar h_i(t)| \to0
\end{multline*}
as $t\to+\infty$ and $k\to+\infty.$

Now fix any index $i\notin J.$ Then $c_i=0$ and $|c_{i-1}|=|c_{i+1}|=1.$ In particular, $i-1,i+1\in J$ and hence, $h_{i-1}(t)\to \bar h_{i-1}$ and $h_{i+1}(t)\to \bar h_{i+1}$ as $t\to+\infty.$ As in  subcase 2.3 in the proof of Theorem \ref{teo:existence},  $S_{i-1}(t)$ and $S_{i+1}(t)$ are parallel, $\nu_{S_{i-1}(t)} = \nu_{S_{i+1}(t)} = \nu_{S_{i-1}^0} = \nu_{S_{i+1}^0},$ and we distinguish two cases:

\begin{itemize}
\item $\bar h_{i+1}\ne \bar h_{i-1} + \scalarp{H(S_{i-1}^0,S_{i+1}^0)}{\nu_{S_{i-1}^0}}.$  Then
\begin{align*}
\sH^1(S_i(t)) \ge & |H(S_{i-1}(t),S_{i+1}(t))| = |h_{i+1}(t) - h_{i-1}(t) - H(S_{i-1}(t),S_{i+1}(t))| \\
> & \frac{1}{4}\Big|\bar h_{i+1} -\bar h_{i-1} - \scalarp{H(S_{i-1}^0,S_{i+1}^0)}{\nu_{S_{i-1}^0}}\Big|=:\tilde \epsilon
\end{align*}
provided $t>0$ is large enough.  Since $\sH^1(S_i) = \sH^1(\tilde S_i^k),$ recalling \eqref{domashniy_kaputt}, as above we can show that $\{\tilde h_i^k(t)\}$ (and hence $\{h_t(t)\}$) is fundamental as $t\to+\infty.$ In particular, $h_i(t)\to \bar h_i$ as $t\to+\infty$ for some $\bar h_i\in\R.$

\item $\bar h_{i+1} = \bar h_{i-1} + \scalarp{H(S_{i-1}^0,S_{i+1}^0)}{\nu_{S_{i-1}^0}}.$ In this case the segment $S_i(t)$ vanishes as $t\to+\infty$ and the segments $S_{i-1}(t)$ and $S_{i+1}(t)$ Kuratowski converge to a subset of the same straight line $L_i$ satisfying $\bar h_{i+1} = \scalarp{H(S_{i+1}^0,L_i)}{\nu_{S_i^0}}.$ 
\end{itemize}

As we have seen at the end of the proof of Theorem \ref{teo:existence} (c), these observations already suffice to conclude that $\Gamma(t)\overset{K}{\to} \bar\Gamma$ as $t\to+\infty$ for some closed set $\bar\Gamma\subset\R^2.$ Since $\Gamma(t_k)\overset{K}{\to} -p^\infty+\Gamma^\infty,$ it follows that $\bar\Gamma =  -p^\infty+\Gamma^\infty.$

This completes the proof of Theorem \ref{teo:long_time_general}. \hfill \qed 

\section{Special solutions in case of square anisotropy}
\label{sec:square_anisotropy}

In this section we assume that $W^\norm$ is the square $[-1,1]^2$ and classify some special solutions of the  crystalline elastic flow. %Similar classifications are probably too difficult in the case of general anisotropy, but in this square case we can do something.

\subsection{Classification of stationary curves}

\begin{theorem}[Stationary curves in the square anisotropy]\label{teo:stationary_square}
Let 
$\Gamma:=\cup_{i=1}^n S_i$ be a  stationary polygonal curve. 

\begin{itemize}
\item {\bf Unbounded case.} Suppose that $S_1$ and $S_n$ are half-lines. Then up to translations, horizontal and
vertical reflections and  rotations by a multiple of $90^o,$ $\Gamma$ can be:
\begin{itemize}
\item a staircase, i.e., $n\ge2$ is any integer, the angles of $\Gamma$ alternates, e.g., $\theta_i=\pi/2$ for all even indices, while $\theta_i=3\pi/2$ for all odd indices, and the segments have arbitrary positive lengths (see Fig. \ref{fig:unb_zigzag});
%
%and $\Gamma$ is one the two staircases with $S_1||S_n$ and with$S_1\perp S_n$  whose segments have arbitrary positive lengths;
%
\begin{figure}[htp!]
\includegraphics[height=1.0cm, width=0.7\textwidth]{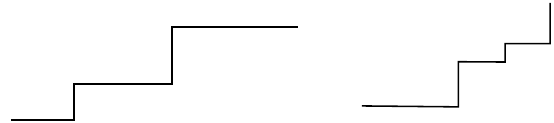}
\caption{\small Staircases.}\label{fig:unb_zigzag}
\end{figure}

\item a right-angle chain, i.e., is a union of $m\ge1$ right-angles of sidelength $\sqrt{2\alpha},$ formed by
$3m-1$ segments and two half-lines (i.e., $n=3m+1$); here $\sH^1(S_{3i+2})=\sH^1(S_{3i+3})= \sqrt{2\alpha}$ for all $0\le i\le
m-1,$ and the segments $S_{3i+1}$ with $0<i<m$ have arbitrary length and zero $\norm$-curvature, i.e.
$c_{3i+1}=0$ (see Fig. \ref{fig:unb_square});
\begin{figure}[htp!]
\includegraphics[width=0.7\textwidth]{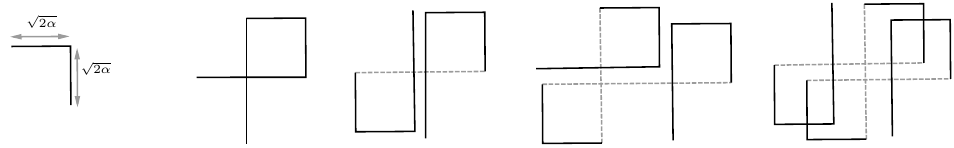}
\caption{\small A right-angle and unbounded right-angle chains with $m=1,2,3,4.$ Zero $\norm$-curvature segments are
depicted by dashed segments.}\label{fig:unb_square}
\end{figure} 

\item a double-right-angle chain, i.e., it is a union of $m\ge1$ double-right-angles consisting of a union of two
horizontal segments of length $a,b>0$ with
$$
\frac{1}{a^2}+\frac{1}{b^2}=\frac{1}{2\alpha}
$$
and one vertical segment of length $\sqrt{2\alpha},$ formed by $4m-1$ segments and two half-lines (i.e., $n=4m+1$); here
$\sH^1(S_{4i+3})=\sqrt{2\alpha}$ for all $0\le i\le m-1,$
$\sH^1(S_{8i+2})=\sH^1(S_{8i})=a$ for all $0\le i\le \intpart{m/2},$ $\sH^1(S_{8i+4})=\sH^1(S_{8i+6})=b$ for all
possible $i\ge0,$
and all segments $S_{4i+1}$ with $0<i<m$ have arbitrary length and zero $\norm$-curvature, i.e., $c_{4i+1}=0$
(see Fig. \ref{fig:unb_trap});
\begin{figure}[htp!]
\includegraphics[width=0.75\textwidth]{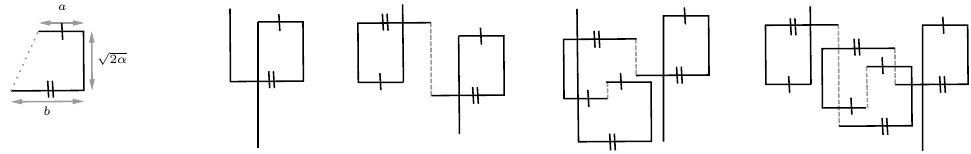}
\caption{\small A double-right-angle and unbounded double-right-angle chains with $m=1,2,3,4.$ Zero $\norm$-curvature
segments are depicted by vertical dashed segments.}\label{fig:unb_trap}
\end{figure}
\end{itemize}

\item {\bf Closed case.} Up to translations,   horizontal and vertical reflections and  rotations by a
multiple of $90^o,$ $\Gamma$ can be:
\begin{itemize}
\item a right-angle chain, i.e., it is a union of $2m\ge2$ right-angles of sidelength $\sqrt{2\alpha},$ formed by
$n=6m$ segments; here $\sH^1(S_{3i+1})=\sH^1(S_{3i+2})= \sqrt{2\alpha}$ for all $0\le i\le 2m-1,$ and the
segments $S_{3i}$ with $1\le i\le 2m$ have zero $\norm$-curvature, i.e. $c_{3i}=0$ (see Fig.
\ref{fig:bdd_square}). Morever, if $m=1,$ $\sH^1(S_3)=\sH^1(S_6)=2\sqrt{2\alpha},$ while if $m>1,$ all zero
$\norm$-curvature segments $S_{3i}$ with $1\le i<2m$ can have arbitrarily length and $S_{6m}$ is
uniquely defined;
\begin{figure}[htp!]
\includegraphics[width=0.6\textwidth]{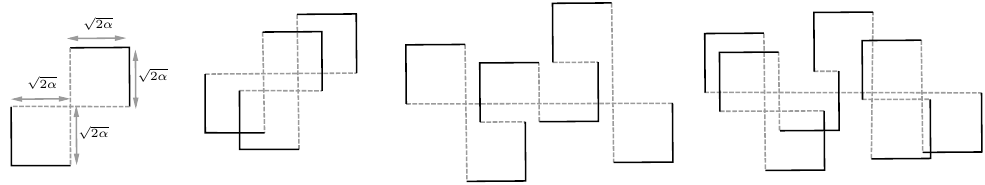}
\caption{\small Closed right-angle chains with $m=1,2,3,4.$ Zero $\norm$-curvature segments are depicted by dashed segments.}\label{fig:bdd_square}
\end{figure} 

\item a double-right-angle chain, i.e., it is a union of $2m\ge2$ double-right-angles consisting of a union of
two horizontal segments of length $\sqrt{4\alpha}$ and one vertical segment of length $\sqrt{2\alpha},$ formed by
$n=8m$ segments; here $\sH^1(S_{2i})=\sqrt{4\alpha}$ for all $1\le i\le 4m,$ $\sH^1(S_{4i+3})=\sqrt{2\alpha}$ for
all $0\le i< 2m,$ and all segments $S_{4i+1}$ with $0\le i<2m$ have zero $\norm$-curvature, i.e., $c_{4i+1}=0$
(see Fig. \ref{fig:bdd_recto}).
Moreover, $\Gamma$ lies in the strip $[-\sqrt{4\alpha},\sqrt{4\alpha}]\times\R,$ all vertical segments with 
transition number $c\in\{-1,0,1\}$ are located on the vertical line $\{c\sqrt{4\alpha},\}\times\R.$ In
particular, $m$ double-right-angles are contained in $[-\sqrt{4\alpha},0]\times\R,$ while the remaining $m$ are
contained in $[0,\sqrt{4\alpha}]\times\R.$
\begin{figure}[htp!]
\includegraphics[width=0.85\textwidth]{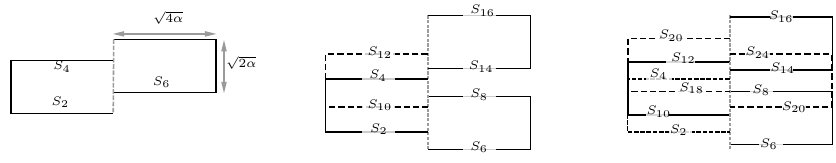}
\caption{\small Closed double-right-angle chains with $m=1,2,3.$ Zero $\norm$-curvature segments are depicted by
vertical dashed segments.}\label{fig:bdd_recto}
\end{figure}

\item a square of sidelength $\sqrt{4\alpha},$ i.e., a Wulff shape of radius $\sqrt{\alpha}$ (see Fig.
\ref{fig:squa1900})

\begin{figure}[htp!]
\includegraphics[width=0.2\textwidth]{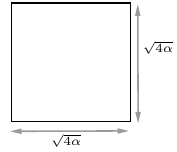}
\caption{\small A Wulff shape of radius $\sqrt{\alpha}.$ }\label{fig:squa1900}
\end{figure}

\end{itemize}

\end{itemize}

\end{theorem}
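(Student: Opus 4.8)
The plan is to first reduce the stationarity system \eqref{stationarS1} to an explicit form adapted to $W^\norm=[-1,1]^2$, and then run a combinatorial case analysis on the string of transition numbers $(c_i)$. I would begin by recording the rigidity forced by admissibility: with a square Wulff shape the segments of an admissible polygonal curve alternate between horizontal and vertical ones, all unit normals lie in $\{\pm\be_1,\pm\be_2\}$, all vertex angles $\theta_i$ lie in $\{\pi/2,3\pi/2\}$, and one has the dictionary ``$c_i=\pm1$ precisely when $\theta_i=\theta_{i+1}$'' (both equal to $3\pi/2$ for $c_i=1$, both equal to $\pi/2$ for $c_i=-1$, up to a global reorientation), while ``$c_i=0$ precisely when $S_i$ is a half-line or $\theta_i\neq\theta_{i+1}$''. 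Since $\sH^1(F_i)=2$, $\delta_i=4$ and $\cot\theta_i=0$ here, \eqref{stationarS1} at a non-half-line segment $S_i$ collapses to
\begin{equation*}
\frac{c_{i-1}^2}{\sH^1(S_{i-1})^2}+\frac{c_{i+1}^2}{\sH^1(S_{i+1})^2}=\frac{1}{2\alpha}\ \ (c_i\neq0),
\qquad
\frac{c_{i-1}^2}{\sH^1(S_{i-1})^2}=\frac{c_{i+1}^2}{\sH^1(S_{i+1})^2}\ \ (c_i=0).
\end{equation*}

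From these two relations I would extract the structure of the sequence $(c_i)$. The first relation forbids a nonzero $c_i$ flanked by two zeros; the second shows a zero $c_i$ has $c_{i-1}=0$ if and only if $c_{i+1}=0$. Hence either all $c_i=0$, or the nonzero entries form maximal \emph{blocks} of length $\geq2$ separated by single zero-curvature \emph{connectors} --- cyclically when $\Gamma$ is closed, while for unbounded $\Gamma$ the half-lines $S_1,S_n$ are forced zeros and $c_2=0$ propagates to all $c_i=0$. Two consecutive segments of a block share a vertex angle, so the dictionary forces them to have equal sign; thus a block has constant sign. A block of length $\geq4$ is impossible: the equation at the first segment of the block (whose outer neighbour has $c=0$) forces the second segment to have length $\sqrt{2\alpha}$, and symmetrically at the other end; feeding this into the relation at a suitable interior block segment yields $\tfrac1{2\alpha}+\tfrac1{\ell^2}=\tfrac1{2\alpha}$ for some length $\ell>0$, absurd. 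So every block has length $2$ or $3$: a length-$2$ block is a right angle with both sides of length $\sqrt{2\alpha}$; a length-$3$ block is a double right angle whose middle segment has length $\sqrt{2\alpha}$ and whose outer segments $a,b$ satisfy $\tfrac1{a^2}+\tfrac1{b^2}=\tfrac1{2\alpha}$; and a connector forces its two neighbouring block-end segments to be of equal length (its own length being free). Finally a length-$2$ and a length-$3$ block cannot be adjacent, since the connector between them would force an outer segment of the length-$3$ block to equal $\sqrt{2\alpha}$, hence $b^{-2}=0$; so every non-staircase stationary curve has all of its blocks of one common length.

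This already settles the unbounded case: the all-zero curve, whose angles are forced to alternate, is a staircase with arbitrary positive side lengths; an all-length-$2$ chain of blocks and connectors capped by two half-lines is a right-angle chain ($n=3m+1$); and an all-length-$3$ chain is a double-right-angle chain ($n=4m+1$), in which the equal-neighbour rule at the connectors forces consecutive double right angles to be mirror images $(a,b),(b,a),\dots$; conversely each such configuration is immediately realized. For the closed case: an all-zero closed curve is impossible, because $c_i=0$ for all $i$ makes every vertical segment (and every horizontal segment) share one normal, so the curve drifts monotonically along a fixed diagonal and cannot close. If a connector is present, blocks and connectors alternate cyclically; since each connector contributes no net turning (the turns at its two endpoints cancel) and the $k-1$ internal vertices of a length-$k$ block all carry the same $\pm90^\circ$ turn fixed by the block's sign, and the total turning is a multiple of $2\pi$, the number of blocks must be even, say $2m$ --- giving $n=6m$ in the length-$2$ case and $n=8m$ in the length-$3$ case. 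Imposing that the polygon close in $\R^2$ then pins down the surviving freedom: in the right-angle case the interior connectors $S_{3i}$ ($1\le i<2m$) stay free and $S_{6m}$ is determined by closure, with the exceptional values $\sH^1(S_3)=\sH^1(S_6)=2\sqrt{2\alpha}$ when $m=1$; in the double-right-angle case closure together with the mirror-image relation forces every outer segment to length $\sqrt{4\alpha}$, whence $\Gamma$ lies in the strip $[-\sqrt{4\alpha},\sqrt{4\alpha}]\times\R$ with each vertical segment of transition number $c$ on the line $\{c\sqrt{4\alpha}\}\times\R$. If no connector is present the whole curve is a single cyclic block: all $c_i$ are equal and all corners convex of one sense, so $\Gamma$ is a convex rectilinear polygon --- a rectangle, possibly traced several times --- and \eqref{stationarS1} forces it to be a square of side $\sqrt{4\alpha}$, i.e. the Wulff shape of radius $\sqrt\alpha$.

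The main obstacle is precisely the last passage, from the combinatorial and metric data to the exact list in the closed case: one must couple the block signs with the global turning number, discard the sign patterns that are combinatorially admissible but fail to close up in $\R^2$, and verify which lengths genuinely survive as free parameters, which single connector $S_{6m}$ is forced, and why the double-right-angle family rigidifies to the strip configuration with all outer segments of length $\sqrt{4\alpha}$. It is this geometric bookkeeping, not the algebra of \eqref{stationarS1}, that carries the weight of the proof; the unbounded case is comparatively soft because the two half-lines absorb what would otherwise be the closure constraints.
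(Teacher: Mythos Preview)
Your proposal is correct and follows essentially the same route as the paper: reduce \eqref{stationarS1} to the two displayed relations for $c_i\neq0$ and $c_i=0$, then run a case analysis on the transition numbers. Your packaging via maximal nonzero \emph{blocks} separated by single zero \emph{connectors} is a cleaner abstraction of what the paper does index-by-index (the paper starts from $c_1=0$, branches on $c_2$, then on $c_4$), and your observations that blocks have constant sign, length at most $3$, and cannot mix lengths are exactly the structural facts the paper extracts in its Case~1/Case~2 split.

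Two small remarks. First, your turning argument for evenness of the number of blocks is fine but slightly indirect; the more transparent reason (which your dictionary already gives) is that a connector flips the common angle from $3\pi/2$ to $\pi/2$ or vice versa, so adjacent blocks have opposite sign, and a cyclic alternating sequence must be even. The paper uses this sign alternation explicitly. Second, you are right that the closure bookkeeping is where the work lies in the bounded case; the paper handles it by tracking segment directions explicitly (all connectors point $(0,1)$, then the horizontal drift per period is $2(a-b)$ for length-$3$ blocks, forcing $a=b$), which is precisely the ``geometric bookkeeping'' you flag. Your sketch of this step is accurate and would go through with that computation filled in.
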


We have already seen in Example \ref{ex:evol_Wulff} that any Wulff shape of radius $\sqrt{\alpha}$ is a
stationary curve. Moreover, one can readily check that closed right-angle and double-right-angle chains have
$0$-index. Thus, the only stationary curve with a nonzero index is a square.

\begin{proof}
If $c_i=1,$ then $c_{i-1}$ and $c_{i+1}$ cannot be $0$ simultaneously. This prevents holes or hills in $\Gamma,$
see Fig. \ref{fig:hillhole} (a).
\begin{figure}[htp!]
\includegraphics[width=0.75\textwidth]{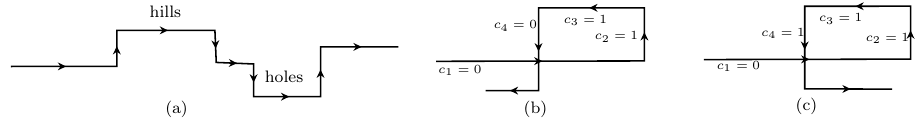}
\caption{\small} \label{fig:hillhole}
\end{figure}

First, assume that $c_1=c_2=0.$ Then by \eqref{hazsut}, $c_3=0.$ Repeating the same argument, we conclude $c_i=0$
for all $1\le i\le n,$ i.e., all segments have zero $\norm$-curvature. In this case, the curve $\Gamma^0$ is an
unbounded staircase, i.e, $n$ is any integer, $S_1$ and $S_n$ are half-lines, and angles $\theta_i$ of $\Gamma$
are alternatively $\pi/2$ and $3\pi/2,$ see Fig. \ref{fig:unb_zigzag}. Clearly, the length of the segments can
be arbitrary and any horizontal or vertical reflections, and   $\pm90^o$-rotations of such curves are also
stationary.

Next, assume that $c_1=0$ and $c_2=1,$ i.e., either $S_1$ is a half-line or $\Gamma$ is neither convex nor
concave near $S_1,$ but locally convex near $S_2.$ Then by admissibility and \eqref{hazsut} (applied with $i=2$), it is locally convex
also near $S_3,$ so that $c_3=1$ and $\sH^1(S_3) = \sqrt{2\alpha}$ (see Fig. \ref{fig:hillhole} (b) and (c)). Now there are two cases.

{\it Case 1: $c_4=0.$} In this case, necessarily, $c_5=c_6=-1,$ and thus, $\sH^1(S_5)=\sH^1(S_6)=\sqrt{2\alpha}.$
Then again by \eqref{hazsut}, $c_7=0,$ and we continue until we reach  $S_n.$ In view of this observation and
an induction argument, $c_{3i+1}=0$ for all $0\le i$ with $3i+1\le n$ and the segments $S_{3i+2}$ and $S_{3i+3}$
form a right-angle. Moreover, each segment of $0$ transition number is joined to two segments, one with positive and
the other with negative transition numbers.

\begin{itemize}
\item Assume that $\Gamma$ is unbounded, i.e., $S_1$ and $S_n$ are half-lines. As $c_1=c_n=0,$ we have
necessarily $n=3m+1$ for some $m\ge1.$ In this case, $\Gamma$ is an unbounded right-angle chain (see Fig.
\ref{fig:unb_square}).

\item Assume that $\Gamma$ is bounded. Let us group the segments as $(S_{3i+1},S_{3i+2},S_{3i+3})$ for each
$i\ge0,$ where $S_{n+k}=S_k.$ Then the triplet $(c_{3i+1},c_{3i+2},c_{3i+3})$ is either
$(0,1,1)$ or $(0,-1,1).$
Since $S_n,$ $S_1$ and $S_2$ are three consecutive segments of $\Gamma,$ and $c_1=0$ and $c_2=1,$ by
\eqref{hazsut} we have $c_n=-1.$ This shows $c_{n-1}=-1$ and $c_{n-2}=0,$ i.e., $n$ must be divisible by $3.$
Moreover, the triplets $(c_{3i+1},c_{3i+2},c_{3i+3})$ starts with $(0,1,1)$ and alternates with $(0,-1,1).$ As we
have seen, the last triplet is $(0,-1,1),$ and thus, the number of triplets must be even, i.e., $n$ is also even. This implies $n=6m$ for some $m\ge1$ and $\Gamma$ is a union of $2m$ right-angles. Moreover, all segments
with nonzero transition number have length $\sqrt{2\alpha},$ the segments $S_{6i+2},S_{6i+3}$ for $0\le i<m,$
forming $m$ right-angles, have positive $\norm$-curvature, equal to $\frac{1}{\sqrt{\alpha}},$ while the segments
$S_{6i+5},S_{6i+6}$ for $0\le i<m,$ forming the remaining $m$ right-angles, have negative $\norm$-curvature,
equal to $-\frac{1}{\sqrt{\alpha}}.$ Furthermore, all, but one, segments with zero transition number can have
arbitrary length  and
the exceptional segment is necessary to close the curve and make it admissible, see Fig. \ref{fig:bdd_square}.
\end{itemize}

{\it Case 2: $c_4=1.$} In this case, by \eqref{hazsut}, the segments $S_2$ and $S_4$ must satisfy 
\begin{equation}\label{hcvbb}
\frac{1}{\sH^1(S_2)^2} + \frac{1}{\sH^1(S_4)^2}=\frac{1}{2\alpha}
\end{equation}
so that both lengths are larger than $\sqrt{2\alpha}.$ Since $\sH^1(S_3)=\sqrt{2\alpha}$ and $c_4=1,$ again by
\eqref{hazsut} we conclude $c_5=0.$ Clearly, $c_6\ne0$ and thus $\Gamma$ must be concave near $S_6,$ i.e., in
\eqref{hazsut} we have $c_6=-1$ and $\theta_6= \pi/2$ so that $\sH^1(S_6)=\sH^1(S_4).$ Then the same observation
above yields $c_7=-1,$ $\sH^1(S_7)=\sqrt{2\alpha},$ the segments $S_8$ and $S_6$ satisfies the same relation in
\eqref{hcvbb} so that $\sH^1(S_8)=\sH^1(S_2).$ Then again $c_9=0$ and now it is the turn of three segments with
positive transition number and so on. By induction, we can show $c_{4i+1}=0,$ $\sH^1(S_{4i+3})=\sqrt{2\alpha},$
$\sH^1(S_{8i+2})=\sH^1(8i+8)=a$ and $\sH^1(S_{8i+4})=\sH^1(S_{8i+6})=b$ for all meaningful $i\ge1$ (i.e., those indices not exceeding $n$) and for some $a,b>\sqrt{2\alpha}$ with
$$
\frac{1}{a^2}  + \frac{1}{b^2} = \frac{1}{2\alpha}.
$$
Rotating $\Gamma$ by $\pm90^o$ we may assume that $S_1$ is vertical and directed upwards.

\begin{itemize}
\item Assume that $\Gamma$ is unbounded, i.e., $S_1$ and $S_n$ are half-lines. As $c_1=c_n=0,$ we have $n=4m+1$
for some $m\ge1.$ In this case, $\Gamma$ is a union of $m$ double-right-angles formed by two horizontal segments
of length $a$ and $b$ and one vertical segment of length $\sqrt{2\alpha},$ see Fig. \ref{fig:unb_trap}.

\item Assume that $\Gamma$ is bounded. As $S_n,S_1,S_2,S_3$ are consecutive segments, we have $c_n=-1$ and
$\sH^1(S_n)=\sH^1(S_2)=a.$ As in the previous case, grouping $(S_{4i+1},S_{4i+2},S_{4i+3},S_{4i+4})$ for $i\ge0$
and observing that corresponding quartets $(c_{4i+1},c_{4i+2},c_{4i+3},c_{4i+4})$ form an alternating series of
$(0,1,1,1)$ and $(0,-1,-1,-1),$ we deduce $n$ must be divisible by $8,$ i.e., $N=8m$ for some $m\ge1.$

We claim that $a=b=\sqrt{4\alpha}.$ Let us show that if $a\ne b,$ then $\Gamma$ cannot be not closed by means of
finite number of double-right-angles. Indeed, as $S_1$ is vertical and directed upwards (along with the vector
$(0,1)$), all segments $S_{4i+1}$ with zero transition number are also vertical and directed upwards. Moreover,
horizontal segments of length $a$ are directed to the right (along with $(1,0)$), while all horizontal segments
of length $b$ are directed to the left (along with $(-1,0)$). Thus, after passing each double-right-angle, we
move $|a-b|$ units along the horizontal axis, namely, to the right of $S_1$ if $a>b$ and to the left of $S_1$ if
$a<b.$ Thus, in $2m$ steps we reach  a point $Z$ with horizontal coordinate equal to $2n|a-b|,$ which is
nonzero by assumption $a\ne b.$ However, as $\Gamma$ is closed, $Z$ must be the starting point of $S_1,$ a
contradiction. Thus, $a=b.$

This equality shows that if $S_1$ lies on the vertical axis, all vertical lines with transition number $c\in
\{-1,0,1\}$ are contained in the vertical line $\{c\sqrt{4\alpha}\}\times\R.$ Moreover, $m$ double-right-angles
lie in the strip $[0,\sqrt{4\alpha}]\times\R$ and the remaining $m$ lie
in $[-\sqrt{4\alpha},0]\times\R.$
\end{itemize}

Now assume that all $c_i=1,$ i.e., $\Gamma$ is a bounded convex curve. Let $\sH^1(S_1)=a_1$ and
$\sH^1(S_2)=a_2.$ Then by \eqref{hazsut} there exists $a_3,a_4>0$ with
$$
\frac{1}{a_1^2}+\frac{1}{a_3^2} =\frac{1}{2\alpha}\quad\text{and}\quad \frac{1}{a_2^2}+\frac{1}{a_4^2}
=\frac{1}{2\alpha}
$$
such that $\sH^1(S_3)=a_3$ and $\sH^1(S_4)=a_4.$ Using \eqref{hazsut} and an induction argument, we can show
$\sH^1(S_{4i+j})=a_j$ for $j=1,2,3,4$ and meaningful $i\ge1.$ If $a_1\ne a_2,$ as in the case of
double-right-angles, we can
show that $\Gamma$ cannot be closed. Thus, $a_1=a_3$ and hence $a_2=a_4.$ Then by definition
$a_j=\sqrt{4\alpha},$ i.e., $\Gamma$ is a square (Wulff shape) of sidelength $\sqrt{4\alpha}.$
\end{proof}

\subsection{Translating solutions}

In this section we are interested in grim reaper-type solutions of \eqref{elastic_ode123}. We start with a general definition.

\begin{definition}\label{def:translate_curves}
Let $\norm$ be a crystalline anisotropy. An admissible polygonal  curve $\Gamma$ is called \emph{translating} provided that there exist a vector $\eta\in\S^1$ (called translation direction) and constant $\lambda>0$ (called translation velocity) such that the set of the signed heights from the segments/half-lines of
$$
\Gamma(t):=\lambda t \eta + \Gamma,\quad t\ge0,
$$
is the solution of the system \eqref{elastic_ode123}.
\end{definition}
 
Some comments are immediate:

\begin{itemize}
\item In view of Theorems \ref{teo:conver_stationar_sol} and \ref{teo:long_time_general}, any bounded polygonal curve cannot be translating.

\item Let $\Gamma$ be translating in the direction $\eta.$ Then the half-lines of $\Gamma$ should be parallel. We expect that if the half-lines are opposite directed, then for the evolution of segments, analogous long-time behaviours as in Theorems \ref{teo:conver_stationar_sol} and \ref{teo:long_time_general} should hold. When the half-lines are co-directed, 
one can readily check that $W^\norm$ should have two facets parallel   to $\eta,$  in which case, the segments $S_i$ of $\Gamma$ parallel to $\eta$ does not translate, i.e., $h_i\equiv0.$
 
\end{itemize}

Now we study translating solutions in the case of when the Wulff shape is  $W^\norm:=[-1,1]^2.$ Let us start with some examples.

\begin{figure}[htp!]
\includegraphics[width=0.8\textwidth]{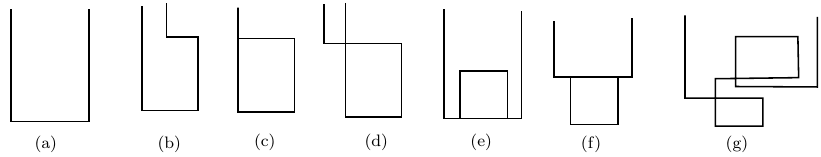}
\caption{\small Curves in Examples \ref{ex:conva}, \ref{ex:chalpa} and \ref{ex:non_transa}.}
\label{fig:translated}
\end{figure}

\begin{example}\label{ex:conva}
As we mentioned in Example \ref{ex:translate}, every unbounded admissible polygonal curve $\Gamma=S_1\cup S_2\cup S_3$ with angles $\theta_2=\theta_3=\pi/2$  (Fig. \ref{fig:translated} (a)) is translating in the direction of the half-lines.
Let us add two more segments to $\Gamma,$ i.e., let $\Gamma=\cup_{i=1}^5S_i$ be an unbounded admissible polygonal curve with angles $\theta_2=\theta_3=\theta_4=\pi/2$ and $\theta_5=3\pi/2.$  
Then $\Gamma$ is translating with velocity $\lambda>0$ if and only if $\lambda\in(0,\frac{2}{\sqrt{2\alpha}})$ and 
\begin{equation}\label{szdtvvb}
\sH^1(S_2) = \sqrt{2\alpha},\qquad \sH^1(S_3) = \sqrt{\frac{2\alpha}{1 - \frac{\lambda\sqrt{2\alpha}}{2}}},\qquad \sH^1(S_4) = 2- \lambda\sqrt{2\alpha}.
\end{equation}
Indeed, in view of the assumption on $\Gamma,$ the evolution equation \eqref{elastic_ode123} for segments  together with the translation assumption imply 
\begin{align*}
h_2' = \frac{2}{l_2} \Big(1 - \tfrac{2\alpha}{l_3^2}\Big) = \lambda,\qquad 
h_3' =\frac{2}{l_3} \Big(1 - \tfrac{2\alpha}{l_2^2}\Big) =0, \qquad 
h_4' = \frac{2}{l_4} \Big( - \tfrac{2\alpha}{l_3^2}\Big) = -\lambda,
\end{align*}
where $l_i:=\sH^1(S_i).$ Here $h_4'=-\lambda$ is because the normal to $S_4$ is $-\be_2.$  One can readily check that this equation admits a solution if and only if $\lambda\in(0,\frac{2}{\sqrt{2\alpha}})$ and in this case $(l_2,l_3,l_4)$ are uniquely given by \eqref{szdtvvb}. Note that 
if $\lambda+1>\frac{2}{\sqrt{2\alpha}},$ then $\Gamma$ is embedded (Fig. \ref{fig:translated} (b)), if $\lambda+1=\frac{2}{\sqrt{2\alpha}},$ then the half-lines of $\Gamma$ lie on the same straight line (Fig. \ref{fig:translated} (c)) and 
if $\lambda+1<\frac{2}{\sqrt{2\alpha}},$ then $\Gamma$ has a self-intersection (Fig. \ref{fig:translated} (d)).
\end{example}

Let us consider curves with a rectangle

\begin{example}\label{ex:chalpa}
(a) Assume that $\Gamma= \cup_{i=1}^6S_i$ is an unbounded admissible polygonal curve with angles $\theta_i=\pi/2$ (Fig. \ref{fig:translated} (e)). Then $\Gamma$ translates in the direction of half-lines with velocity $\lambda>0$ if and only if there exists $a\in (\sqrt{2\alpha},\sqrt{4\alpha})$ such that 
$$
\lambda = \Big(\tfrac{1}{2\alpha}\Big[\tfrac{1}{4(1-\frac{2\alpha}{a^2}) ^2 } + \tfrac{1}{4(\frac{4\alpha}{a^2} - 1) ^2 }  \Big]^{-1}\Big)^{1/2}
$$
and the lengths of the segments of $\Gamma$ are uniquely determined as 
\begin{gather*}
\sH^1(S_2) = \sH^1(S_6) = \tfrac{2}{\lambda} \Big(1 - \tfrac{2\alpha}{a^2}\Big),
\qquad 
\sH^1(S_3)=\sH^1(S_5) = a,
\qquad 
\sH^1(S_4) = \tfrac{2}{\lambda} \Big(\tfrac{4\alpha}{a^2} - 1\Big).
\end{gather*}
Indeed, the corresponding system of ODEs \eqref{elastic_ode123} is represented as 
\begin{align*}
\begin{cases}
h_2' = \tfrac{2}{l_2} \Big(1 - \tfrac{2\alpha}{l_3^2}\Big) = \lambda,\\
h_3' = \tfrac{2}{l_3} \Big(1 - 2\alpha \Big(\tfrac{1}{l_2^2} + \tfrac{1}{l_4^2}\Big)\Big) = 0,\\
h_4' = \tfrac{2}{l_4}\Big(1 - 2\alpha \Big(\tfrac{1}{l_3^2} + \tfrac{1}{l_5^2}\Big)\Big) = -\lambda,\\
h_5' = \tfrac{2}{l_4}\Big(1 - 2\alpha \Big(\tfrac{1}{l_4^2} + \tfrac{1}{l_6^2}\Big)\Big) = 0,\\
h_6' = \tfrac{2}{l_6}\Big(1 - \tfrac{2\alpha}{l_5^2}\Big) = \lambda,
\end{cases}
\end{align*}
where $l_i:=\sH^1(S_i).$
Comparing the equations for $h_3'$ and $h_5',$ we immediately find $l_2=l_6.$ Then the equations for $h_2'$ and $h_6'$ imply $l_3=l_5.$ Thus, from the equations for $h_2'$ and $h_4'$ as well as the positivity of $\lambda,$ we get $a:=l_3\in(\sqrt{2\alpha}, \sqrt{4\alpha}).$ Thus, given such $a,$ 
$$
l_2   = l_6 = \tfrac{2}{\lambda} \Big(1 - \tfrac{2\alpha}{a^2}\Big),\quad 
l_4 = \tfrac{2}{\lambda} \Big(\tfrac{4\alpha}{a^2} - 1\Big),
$$
and thus,
$$
\frac{1}{2\alpha} = \frac{1}{l_2^2} + \frac{1}{l_4^2} = \frac{\lambda^2}{4(1-\frac{2\alpha}{a^2})^2} +  \frac{\lambda^2}{4(\frac{4\alpha}{a^2} - 1)^2}.
$$
This equation admits a unique positive solution in $\lambda,$ and the assertion follows. Notice that $\Gamma$ has a vertical axial symmetry. 
\smallskip

(b) Assume that $\Gamma= \cup_{i=1}^6S_i$ is an admissible polygonal curve with angles $\theta_2=\theta_6=\pi/2$ and $\theta_3=\theta_4=\theta_5 = 3\pi/2$ (Fig. \ref{fig:translated} (f)).  As in (a), from \eqref{elastic_ode123} and the translation condition we deduce that $\Gamma$ is translating if and only if its velocity and length of segments satisfy the system 
$$
l_4 = \sqrt{2\alpha}, \qquad \tfrac{4\alpha}{l_2l_3^2} = \lambda,\qquad \tfrac{4\alpha}{l_6l_5^2} = \lambda, \qquad \tfrac{2}{l_4}\Big(1- 2\alpha\Big(\tfrac{1}{l_3^2}+ \tfrac{1}{l_5^2}\Big)\Big) = \lambda.
$$
Thus, if we fix $a:=l_2,$ then  
\begin{equation}\label{ahstczvb}
l_2 = a,\qquad 
l_3 = \sqrt{\frac{4\alpha}{\lambda a}},\quad l_4=\sqrt{2\alpha},\qquad 
l_5 = \sqrt{\frac{4\alpha}{2- \lambda\sqrt{2\alpha} - \lambda a}},
\qquad 
l_6= \frac{2-\lambda\sqrt{2\alpha} - \lambda a}{\lambda}.
\end{equation}
To have $l_5,l_6>0$ we should have $2-\lambda\sqrt{2\alpha} - \lambda a>0.$ This implies $\lambda$ must satisfy $0<\lambda<\frac{2}{a + \sqrt{2\alpha}}.$ In conclusion, $\Gamma$ is translating with velocity $\lambda$ if and only if there exists $a>0$ such that  $0<\lambda<\frac{2}{a + \sqrt{2\alpha}}.$ In this case, the lengths of the segments of $\Gamma$ are uniquely given by \eqref{ahstczvb}. Notice that, unlike (a), $\Gamma$ not necessarily admits a vertical symmetry.
\end{example}

Now consider an example with two rectangles, which is not translating.

\begin{example}\label{ex:non_transa}
Assume that $\Gamma=\cup_{i=1}^{10}S_i$ is an unbounded admissible polygonal curve with angles $\theta_2=\theta_7=\theta_8=\theta_9=\theta_{10}=\pi/2$ and 
$\theta_3=\theta_4=\theta_5=\theta_6=3\pi/2$ (Fig. \ref{fig:translated} (g)); thus, $\Gamma$ has two rectangles. Let us show that such $\Gamma$ cannot be translating. Indeed, by  \eqref{elastic_ode123} and the translating assumption
$$
h_7' = \tfrac{2}{l_7}\Big(1- \tfrac{1}{l_8^2}\Big) =0,\qquad 
h_9' = \tfrac{2}{l_9}\Big(1- 2\alpha\Big(\tfrac{1}{l_8^2} + \tfrac{1}{l_{10}^2}\Big)\Big) = 0.
$$
Thus,  by the first equation, $l_8=\sqrt{2\alpha},$ but from the second one,
$$
0 = 1 - \frac{2\alpha}{l_8^2} - \frac{2\alpha}{l_{10}^2} = - \frac{2\alpha}{l_{10}^2} <0,
$$
a contradiction.
\end{example}

These examples show that in general a complete classification of all possible translating solutions (as is done in the stationary case) is not so easy. Still, we can provide a classification under extra assumptions. 

Let us study translating polygonal curve.  admissible with $W^\norm=[-1,1];$ note that a curve is convex in this case if and only if all segments have the same nonzero transition constant. Moreover, since each rectangle contributes with four segments, by induction we can show that if $\Gamma = \cup_{i=1}^n S_i$ is convex and has two parallel co-directed half-lines, then $n=4m+3$ for some $m\ge0,$ which represents the number of rectangles.  Note that the curve $\Gamma$ with a single segment in Example \ref{ex:conva} and with a single rectangle in Example \ref{ex:chalpa} (a) are the only such translating convex curves for $m=0$ and $m=1.$ Below we study translating convex curves with $m\ge2.$

\begin{theorem}[Translating convex curves]%\label{teo:convex_curves}
Let $W^\norm=[-1,1]^2$ and let $\Gamma=\cup_{i=1}^nS_i$ with $n=4m+3$ for some $m\ge2$ be a translating convex curve with angles $\theta_i=\pi/2$ (so that $c_1=c_n=0$ and $c_i=-1$ for all $2\le i\le n-1$).  Then there exists a constant $a>0$ satisfying 
\begin{equation}\label{ocpsvsbsn}
\frac{1}{2\alpha} <a < \frac{m+1}{2m\alpha}  
\end{equation}
such that the length of the segments and the translation velocity of $\Gamma$ are uniquely defined by $a.$  In particular, $\Gamma$ is uniquely defined (up to a translation) and has an axial symmetry. 
\end{theorem}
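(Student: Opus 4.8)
The plan is to set up the system of ODEs \eqref{elastic_ode123} that expresses the translation condition, exploit the symmetry forced by the structure of a convex curve with co-directed half-lines, reduce everything to a single parameter, and then analyze the resulting algebraic constraint to extract the range \eqref{ocpsvsbsn}. First I would label the curve: since $\Gamma$ is convex with all $\theta_i = \pi/2$, $S_1$ and $S_n$ are co-directed half-lines (say pointing in the direction $\eta$, which up to rotation we take to be $\be_2$), and $S_2,\ldots,S_{n-1}$ are the "active" segments, alternating between those orthogonal to $\eta$ (call their lengths the "widths", there are $2m+1$ of them, occurring at positions $S_3, S_5, \ldots$ — actually the vertical segments parallel to $\eta$) and those parallel to $\eta$ (the "horizontal" ones). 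I would write $l_i := \sH^1(S_i)$ and, for each active segment, translate the condition $h_i' = \lambda \scalarp{\eta}{\nu_{S_i}}$: for segments orthogonal to $\eta$ this gives $h_i' = 0$, and for segments parallel to $\eta$ it gives $h_i' = \pm\lambda$. Using $W^\norm = [-1,1]^2$ so that $\sH^1(F_i) = 2$, $\norm^o(\nu_{S_i}) = 1$, $c_i = -1$ for $2\le i\le n-1$, $\sin\theta_i = 1$, $\cot\theta_i + \cot\theta_{i+1} = 0$ for all interior vertices, equation \eqref{elastic_ode123} reduces on $S_i$ to $h_i' = \tfrac{2}{l_i}\bigl(1 - 2\alpha(\tfrac{1}{l_{i-1}^2} + \tfrac{1}{l_{i+1}^2})\bigr)$ (with a single reciprocal term for the segments adjacent to a half-line, since $c_1 = c_n = 0$).

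Next I would exploit the symmetry. The zero-velocity equations ($h_i' = 0$ for the segments orthogonal to $\eta$) read $\tfrac{1}{l_{i-1}^2} + \tfrac{1}{l_{i+1}^2} = \tfrac{1}{2\alpha}$, linking consecutive horizontal lengths; the equations $h_i' = \pm\lambda$ for the horizontal segments link the surrounding vertical lengths. Reading these relations recursively from both ends inward — the two ends being symmetric because both half-lines are co-directed — one shows by induction that the configuration is forced to be mirror-symmetric about a vertical axis, i.e. $l_i = l_{n+1-i}$ for all $i$, and moreover that, once one picks the width $a$ of (say) the outermost horizontal segments adjacent to the half-lines (equivalently the first free parameter), every other length and $\lambda$ itself is determined. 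Concretely: the relation on consecutive horizontal widths, $\tfrac{1}{a_k^2} + \tfrac{1}{a_{k+1}^2} = \tfrac{1}{2\alpha}$, lets one solve $a_{k+1}$ in terms of $a_k$, so all horizontal widths are determined by $a := a_1$; the vertical lengths are then read off from the $h_i' = \pm\lambda$ equations in terms of $a$ and $\lambda$; and finally the one equation coming from the segment adjacent to the half-line (which only has one neighbour contributing, because $c_1 = 0$) pins $\lambda$ as an explicit function of $a$ (this is the analogue of the computation in Example \ref{ex:chalpa}(a), just iterated $m$ times). This is the bookkeeping-heavy part but conceptually routine: the recursion $a \mapsto a_2 \mapsto \cdots$ is monotone, and the condition that all intermediate lengths stay positive is what cuts out an interval of admissible $a$.

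Then I would determine the endpoints of that interval, i.e. derive \eqref{ocpsvsbsn}. The lower bound $a > \tfrac{1}{2\alpha}$ — wait, more precisely $\tfrac{1}{a^2} < \tfrac{1}{2\alpha}$, i.e. $a > \sqrt{2\alpha}$; I should double-check the exact normalization, but the paper writes it as $\tfrac{1}{2\alpha} < a$, so presumably $a$ here denotes $1/l^2$ for some length $l$, or the statement uses a reciprocal convention — in any case one boundary comes from requiring the first reciprocal relation $\tfrac{1}{a^2} + \tfrac{1}{a_2^2} = \tfrac{1}{2\alpha}$ to have a positive solution $a_2$, forcing $\tfrac{1}{a^2} < \tfrac{1}{2\alpha}$. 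The upper bound $a < \tfrac{m+1}{2m\alpha}$ should come from the requirement that the recursion survives all $m$ steps with positive vertical lengths: iterating $\tfrac{1}{a_{k+1}^2} = \tfrac{1}{2\alpha} - \tfrac{1}{a_k^2}$, one sees the quantity $\tfrac{1}{a_k^2}$ decreases by a fixed amount only if... actually the cleanest route is to telescope: summing the reciprocal relations and the velocity relations over all $m$ rectangles produces a linear constraint, and positivity of the innermost segment (or of $\lambda$, or of the central segment length) gives exactly $\tfrac{1}{a} > \tfrac{2m\alpha}{m+1}$ after simplification. Finally, for each $a$ in the open interval, the explicit formulas furnish a genuine translating convex curve, and uniqueness up to translation plus the axial symmetry follow from the symmetry argument above; conversely any translating convex curve with these angle data must arise this way.

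The main obstacle I expect is the inductive/telescoping step that turns the local recursion among the $l_i$'s into the clean two-sided bound $\tfrac{1}{2\alpha} < a < \tfrac{m+1}{2m\alpha}$ — in particular, identifying which positivity requirement ($\lambda > 0$, or a specific interior length $> 0$) is the binding one that yields the sharp upper endpoint, and verifying that no earlier step fails first. Establishing that the recursion forces global mirror symmetry (rather than merely being consistent with it) also needs care: one must argue that the boundary conditions at \emph{both} half-lines, which are identical because the half-lines are co-directed, together with uniqueness of solutions to the recursion propagated from each end, leave no freedom other than the single parameter $a$. Everything else — the reduction of \eqref{elastic_ode123} in the square case, and the explicit solution of the $2\times2$-type subsystems — is mechanical and parallels Examples \ref{ex:conva} and \ref{ex:chalpa}.
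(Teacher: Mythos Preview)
Your setup is right—the reduction of \eqref{elastic_ode123} in the square case and the split into zero-velocity equations (odd internal indices) versus $\pm\lambda$ equations (even indices) is exactly how the paper begins. But your identification of the governing parameter and of the recursion structure is off, and this is where the argument would go wrong.

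The zero-velocity equations $\tfrac{1}{l_{i-1}^2}+\tfrac{1}{l_{i+1}^2}=\tfrac{1}{2\alpha}$ (odd $i$) link consecutive \emph{horizontal} lengths $l_2,l_4,l_6,\ldots$. But this recursion is period-two, not monotone: it forces $l_2=l_6=l_{10}=\cdots$ and $l_4=l_8=\cdots$, so all horizontal lengths take only two values. This is not where the free parameter lives. The real content is in the \emph{vertical} lengths $l_3,l_5,\ldots,l_{4m+1}$, governed by the $\pm\lambda$ equations. Because the horizontal lengths take only two values and the sign of $h_i'$ alternates with $i\equiv 0,2\pmod 4$, the sums $\tfrac{1}{l_{i-1}^2}+\tfrac{1}{l_{i+1}^2}$ for even $i$ take exactly two constant values, which the paper calls $a$ and $b$; one checks $a>\tfrac{1}{2\alpha}>b$ directly from $\lambda>0$. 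The parameter $a$ in \eqref{ocpsvsbsn} is this sum $\tfrac{1}{l_3^2}+\tfrac{1}{l_5^2}$, not a length or a single reciprocal square—this resolves your normalization worry and explains why the lower bound is $a>\tfrac{1}{2\alpha}$ with no square root.

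The symmetry $l_i=l_{n+1-i}$ does not come from ``reading from both ends'': it comes first from the two boundary equations $h_2'=h_{4m+2}'=\lambda$ giving $l_3=l_{4m+1}$, and then from the alternating $a,b$ pattern propagating this inward to $l_{2m+1}=l_{2m+3}$. With $x_j:=1/l_j^2$ one then has a genuinely linear system $x_{4i-1}+x_{4i+1}=a$, $x_{4i+1}+x_{4i+3}=b$ anchored by the central equality, which the paper solves explicitly (separating $m$ even and odd). The closure condition—matching the boundary equation for $h_2'$—yields $b=\tfrac{m}{m+1}a$, and the upper bound in \eqref{ocpsvsbsn} is exactly the requirement $b<\tfrac{1}{2\alpha}$. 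Positivity of all intermediate $x_j$ is then automatic, not the binding constraint you were looking for. Your telescoping idea and ``positivity of the innermost segment'' do not produce this; without correctly locating $a$ and $b$ in the vertical structure, you would not arrive at \eqref{ocpsvsbsn}.
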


The explicit form of the length and velocity will be given in the course of the proof.
 
\begin{proof}
Without loss of generality assume that the translation direction is $\be_2.$ Since the vertical segments do not translate, $h_i\equiv0$ for all odd indices $i.$ For shortness, we write $l_i:=\sH^1(S_i).$
Then the evolution equation \eqref{elastic_ode123} is represented as 
\begin{align}\label{piyarike_pehlenazar}
\begin{cases}
h_2' = \tfrac{2}{l_2} \Big(1 - \tfrac{2\alpha}{l_3^2}\Big) = \lambda,\\
h_i' = \tfrac{2}{l_i} \Big(1 - 2\alpha \Big(\tfrac{1}{l_{i-1}^2} + \tfrac{1}{l_{i+1}^2}\Big)\Big) = 
\begin{cases}
0 & i=3,5,\ldots,4m+1,\\ 
-\lambda & i=4,8,\ldots,4m,\\
\lambda & i=6,10,\ldots,4m-2,
\end{cases}
\\
h_{4m+2}' = \tfrac{2}{l_{4m+2}}\Big(1 - \tfrac{2\alpha}{l_{4m+1}^2}\Big) = \lambda.
\end{cases}
\end{align}
In view of equations for $h_i'$ with odd indices,
\begin{equation}\label{ahstczv3}
\frac{1}{2\alpha} = \frac{1}{l_{4i-2}^2} + \frac{1}{l_{4i}^2} = \frac{1}{l_{4i}^2} + \frac{1}{l_{4i+2}^2} ,\quad i=1,\ldots,m,
\end{equation}
and hence, 
$$
l_2=l_6=\ldots = l_{4m+2}\quad \text{and}\quad l_4=l_8=\ldots = l_{4m}.
$$
Thus, from the equations for $h_2'$ and $h_{4m+2}'$ we deduce $l_3=l_{4m+1}.$ Moreover, from the equations for $h_{4i+2}'$ and $h_{4i}',$ we find
\begin{equation*}%\label{ahszc6v}
a:=\frac{1}{l_3^2} + \frac{1}{l_5^2} = \frac{1}{l_7^2} + \frac{1}{l_9^2} = \ldots = 
\frac{1}{l_{4m-1}^2} + \frac{1}{l_{4m+1}^2} 
\end{equation*}
and 
\begin{equation*}%\label{ahszc6v2}
b:=\frac{1}{l_5^2} + \frac{1}{l_7^2} = \frac{1}{l_9^2} + \frac{1}{l_{11}^2} = \ldots = 
\frac{1}{l_{4m-3}^2} + \frac{1}{l_{4m-1}^2},
\end{equation*}
where by \eqref{piyarike_pehlenazar} and the positivity of $\lambda,$ the numbers $a$ and $b$ satisfy 
\begin{equation}\label{pehdtfzv}
a>\frac{1}{2\alpha}>b.
\end{equation}
Recalling $l_3=l_{4m+1},$ from these equalities we deduce 
\begin{equation}\label{ahstzc6b}
l_3 = l_{4m+1},\qquad l_5 = l_{4m-1},\quad \ldots, \quad l_{2m+1} = l_{2m+3}.
\end{equation}

Assume first $m$ is even, i.e., $m=2k$ for some $k\ge1,$ and setting $x_i:=\frac{1}{l_i^2}$ for the moment, consider the system of equations
\begin{gather}\label{sytstsgddd}
\begin{cases}
x_3+x_5 = x_7+x_9= \ldots = x_{4k-1}+x_{4k+1} = a,\\
x_5 + x_7 = x_9+x_{11} = \ldots = x_{4k+1} + x_{4k+3} =b.
\end{cases} 
\end{gather}
From the last equality in \eqref{ahstzc6b}, $x_{4k+1} = x_{4k+3},$ and hence, $x_{4k+1} = b/2.$ Then from the system \eqref{sytstsgddd} we find 
$$
\begin{cases}
x_{3+4i} = (k-i)a-\frac{2k-2i-1}{2}b & i=0,1,\ldots,k-1,k, \\[2mm]
x_{1+4i} = \frac{2k-2i+1}{2}b - (k-i)a & i=1,\ldots,k-1,k. 
\end{cases}
$$
Since $x_i=1/l_i^2,$ these numbers should be positive. By assumption \eqref{pehdtfzv}, we already have $x_{3+4i}>0, $ but to have $x_{1+4i}>0$ we should assume 
$$
b > \frac{2k-2i}{2k-2i+1}a \quad \text{for all $i=1,\ldots,k,$}
$$
which implies 
\begin{equation}\label{pehdtfzv2}
b>\frac{2k-2}{2k-1}a = \frac{m-2}{m-1}a.
\end{equation}
Thus, assuming $a$ and $b$ satisfy  \eqref{pehdtfzv} and \eqref{pehdtfzv2}, we can uniquely represent the length of segments with odd indices by $a$ and $b$ as 
$$
\begin{cases}
l_{3+4i} = l_{8k-4i+1} = \frac{1}{\sqrt{(k-i)a-\frac{2k-2i-1}{2}b}} & \quad \text{for $i=0,1,\ldots,k-1,k,$} \\[2mm]
l_{1+4i} = l_{8k-4i+3} = \frac{1}{\sqrt{\frac{2k-2i+1}{2}b - (k-i)a}} & \quad  \text{for $i=1,\ldots,k-1,k.$}
\end{cases}
$$
Then by \eqref{piyarike_pehlenazar} 
$$
l_4 = l_8 = \ldots = l_{4m} = \frac{2(2a\alpha - 1)}{\lambda},\qquad 
l_2 = l_6 = \ldots = l_{4m+2} = \frac{2(1-2b\alpha )}{\lambda},
$$
where the translation velocity $\lambda>0$ can be obtained as the unique solution of 
$$
\frac{1}{2\alpha} = \frac{\lambda^2}{4(2a\alpha-1)^2} + \frac{\lambda^2}{4(1-2b\alpha)^2},
$$
which comes from \eqref{ahstczv3}.

Finally, let us find a relation between $a$ and $b.$ From the first equation in \eqref{sytstsgddd} we have 
$$
1- 2\alpha \Big(ka-\frac{2k-1}{2}b\Big) = \frac{\lambda}{2}\,\frac{2(1-2b\alpha)}{\lambda},
$$
which simplifies to 
$$
b = \frac{m}{m+1}\,a.
$$
Notice that this choice of $b$ satisfies \eqref{pehdtfzv2}, but to obtain \eqref{pehdtfzv} we should ask additionally $a<\frac{m+1}{2m\alpha}$. Using these relations we can uniquely identify the length of the segments of $\Gamma$ and its translation velocity $\lambda$ in terms of $a.$ Since the angles of $\Gamma$ are known, these lengths uniquely define $\Gamma$ (up to a translations).
\medskip

Now assume $m$ is odd, i.e., $m=2k+1$ for some $k\ge1.$
In this case, the system \eqref{sytstsgddd} becomes  
\begin{gather*}%\label{sytstsgdd2}
\begin{cases}
x_3+x_5 = x_7+x_9= \ldots = x_{4k+3}+x_{4k+5} = a,\\
x_5 + x_7 = x_9+x_{11} = \ldots = x_{4k+1} + x_{4k+3} =b,
\end{cases} 
\end{gather*}
where by the last equality in \eqref{ahstzc6b}, $x_{4k+3} = x_{4k+5}.$ Thus, as above $b$ satisfies 
\begin{equation}\label{pehdtfzv3}
b>\frac{2k-1}{2k}a = \frac{m-2}{m-1}a
\end{equation}
and
$$
\begin{cases}
l_{3+4i} = l_{8k-4i+5} = \frac{1}{\sqrt{ \frac{2k-2i+1}{2}a - (k-i)b } } & \quad\text{for $i=0,1,\ldots,k-1,k,$} \\[2mm]
l_{5+4i} = l_{8k-4i+3} = \frac{1}{\sqrt{ (k-i)b-\frac{2k-2i-1}{2}a }} & \quad \text{for $i=0,\ldots,k-1,k.$}
\end{cases}
$$
The definitions of $l_i$ with even indices and the translation velocity $\lambda$ do not change, and the relation between $a$ and $b$ reads as 
$$
1 - 2\alpha \Big(\frac{2k +1}{2}a - kb\Big) = \frac{\lambda}{2}\,\frac{2(1-2b\alpha)}{\lambda},
$$
which simplifies again to 
$$
b= \frac{m}{m+1}\,a,
$$
and if we assume \eqref{ocpsvsbsn}, $b$ satisfies both \eqref{pehdtfzv} and \eqref{pehdtfzv3}. Then as above, using $a$ we can define $\Gamma$ and $\lambda$ uniquely (up to a translation).

The explicit formulas for the length imply that the segments of $\Gamma$ are arranged symmetrically with respect to the median line of the strip bounded by the two half-lines of $\Gamma.$ Hence, $\Gamma$ itself is axially  symmetric.
\end{proof}

Inspired from Example \ref{ex:non_transa} we can perform a similar classification in a slightly more general situation. 
For simplicity, let us call an unbounded curve $\Gamma:=\cup_{i=1}^n S_i$ \emph{nice} if $n=4m+3$ for some $m\ge0$ and $\theta_{4j+3}=\theta_{4j+4}=\theta_{4j+5}=\theta_{4j+6}\in \{\pi/2,3\pi/2\}$ for any $j=0,1,\ldots,m-1.$  Notice that $c_i\ne 0$ for all indices $1<i<n$ such that $i$ is either odd or divisible by $4.$ 
Let us call a rectangle $R$ of $\Gamma$ \emph{convex} resp. \emph{concave} if all the angles $\Gamma$ at the vertices of $R$ are $\pi/2$ resp. $3\pi/2.$ Let us   also call the rectangle consisting of segments $S_{4j-1},S_{4j},S_{4j+1}$ and $S_{4j+2}$  
the \emph{$j$-th rectangle} of $\Gamma.$ 

\begin{theorem}[A possible structure of nice translating curves]
Let $W^\norm=[-1,1]^2$ and  $\Gamma=\cup_{i=1}^nS_i$ be a  nice translating curve.

\begin{itemize}
\item Assume that $\theta_2=\pi/2.$ Then $\Gamma$ is convex.

\item Assume that $\theta_2=3\pi/2.$ Then $\Gamma$ consists of a union of (possibly alternating) chain of convex and concave rectangles. Moreover, two convex rectangles cannot be consecutive and  $\theta_{n-1} = 3\pi/2.$ In particular, the first and last rectangles of $\Gamma$ are concave.
\end{itemize}
\end{theorem}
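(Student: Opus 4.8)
The plan is to reduce the problem to a finite algebraic system, exactly as in the proof of the previous theorem (Translating convex curves) and in Example~\ref{ex:non_transa}, and then read off the combinatorial structure from sign considerations. Since $\Gamma$ is translating with $\lambda>0$ in a direction $\eta$ and its half-lines are parallel, the discussion following Definition~\ref{def:translate_curves} forces the half-lines (and hence $\eta$) to be parallel, and for $W^\norm=[-1,1]^2$ we may take $\eta=\be_2$. Then every odd-indexed segment is vertical, parallel to $\eta$, and stationary ($h_i'\equiv0$), while every even-indexed segment is horizontal with $h_i'=\langle\eta,\nu_{S_i}\rangle\lambda=\pm\lambda$. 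Plugging this into \eqref{elastic_ode123} with $\sH^1(F_i)=2$, $\norm^o(\nu)=1$, $\delta_j=4$, $\cot\theta_i=0$ and $\sin\theta_i=\sin\theta_{i+1}\in\{\pm1\}$ (when $c_i\neq0$) gives, for each interior odd $i$ (where $c_i^2=1$ by the niceness hypothesis), the balance relation $\frac{c_{i-1}^2}{\sH^1(S_{i-1})^2}+\frac{c_{i+1}^2}{\sH^1(S_{i+1})^2}=\frac{1}{2\alpha}$ (so not both neighbours are flat), and for each even $i$ an explicit expression for $h_i'$. I would also record two elementary facts for the square anisotropy: $c_i=0$ iff $\theta_i+\theta_{i+1}=2\pi$ (i.e. $\theta_i\neq\theta_{i+1}$), and $c_i=-\sin\theta_i$ whenever $c_i\neq0$; in particular the base segments $S_{4j+2}$ have $c=0$ exactly when the $j$-th and $(j{+}1)$-th rectangles are of different type.

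The heart of the argument is that the sign $\langle\eta,\nu_{S_i}\rangle$ of $h_i'$ on a horizontal segment is already forced by the sequence of angles. Walking along $\Gamma$ from $S_1$, the $\be_2$-component of the traversal direction of a vertical segment flips precisely when one crosses a vertex with $c\neq0$ (i.e. $\theta_i=\theta_{i+1}$) and is preserved when one crosses a flat vertex; and then the $\be_2$-component of $\nu$ on the next horizontal segment $S_i$ equals $\pm$ that traversal sign according as $\theta_i=\pi/2$ or $3\pi/2$. On the other hand, using the balance relations coming from the two adjacent odd segments, the ODE expression for $h_i'$ simplifies to a constant multiple of a single quantity of known sign, namely of the form $1-2\alpha\big(\sH^1(S_{i-1})^{-2}+\sH^1(S_{i+1})^{-2}\big)$ on a rectangle side or top and of $\sH^1(S_{i+1})^{-2}-\sH^1(S_{i-1})^{-2}$ on a flat base. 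Matching the geometric sign with the ODE sign yields the combinatorial obstructions, and these I would propagate rectangle by rectangle, starting at $\theta_2$. For the first item, $\theta_2=\pi/2$ forces the first rectangle to be convex and the subsequent free/base angle again to be $\pi/2$, and then inductively every rectangle is convex and every intermediate base flat-free, so all angles equal $\pi/2$ and $\Gamma$ is convex. For the second item, $\theta_2=3\pi/2$ forces the first rectangle to be concave; an interior convex rectangle is permitted, but the balance relation together with the sign analysis at the base segment separating two rectangles—this is exactly the mechanism making Example~\ref{ex:non_transa} non-translating—rules out two consecutive convex rectangles, and the analysis of the last base forces $\theta_{n-1}=3\pi/2$, so the last rectangle is concave as well.

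The step I expect to be the genuine obstacle is this sign/orientation bookkeeping: tracking the traversal orientation of the vertical segments, the sign of $\nu$ on the horizontal segments, and the boundary cases at $S_2$ and $S_{n-1}$, where one neighbour is a half-line (hence has $c=0$) and the balance relation degenerates; one must then verify carefully that the geometric sign and the ODE-induced sign are compatible only in the configurations claimed. Once this is set up, the propagation along the curve, and the positivity constraints that exclude the forbidden patterns, run along the same lines as in the proof of the Translating convex curves theorem.
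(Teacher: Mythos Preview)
Your proposal lands on the correct mechanism---the contradiction from the $h'=0$ balance relations at the vertical segments, exactly as in Example~\ref{ex:non_transa}---but you build far more scaffolding around it than the paper does. The paper's proof never touches the horizontal-segment equations, never tracks the sign of $\langle\eta,\nu_{S_i}\rangle$, and never does any orientation bookkeeping of the traversal direction. It works entirely with the equations $h_i'=0$ at the odd-indexed (vertical) segments: each such equation reads $\tfrac{c_{i-1}^2}{l_{i-1}^2}+\tfrac{c_{i+1}^2}{l_{i+1}^2}=\tfrac{1}{2\alpha}$, and whenever one of the two horizontal neighbours has $c=0$ while the other does not, comparing the two balance equations on either side of the rectangle's top segment gives $\tfrac{1}{l^2}=\tfrac{1}{2\alpha}$ from one and $\tfrac{1}{l^2}+\tfrac{1}{l'^2}=\tfrac{1}{2\alpha}$ from the other, hence $l'=\infty$, a contradiction. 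That single observation is then propagated inductively along the chain of rectangles.

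So your sign analysis on the even-indexed segments---matching the geometric sign of $h_i'$ with the ODE expression involving $\sH^1(S_{i-1})^{-2}-\sH^1(S_{i+1})^{-2}$---is a legitimate alternative route, and it would presumably also work, but it is not what the paper does and it introduces exactly the delicate orientation tracking you yourself flag as the obstacle. The paper's approach sidesteps this entirely: since every contradiction comes from a pair of \emph{stationarity} conditions at vertical segments, no knowledge of the translation direction, the value of $\lambda$, or the sign of any $h_i'$ on a horizontal segment is ever needed. If you strip your argument down to just the odd-index balance relations and the combinatorics of when a base segment has $c=0$ (namely, at a change of rectangle type or when $\theta_2$ resp.\ $\theta_n$ differs from the adjacent rectangle angle), you recover the paper's proof in a few lines.
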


\begin{proof}
Assume that $\theta_2=\pi/2.$ Then by the  niceness assumption, $\theta_3=\theta_4=\theta_5=\pi/2.$ Assume that $\theta_6=3\pi/2$ so that $c_6=0.$ Then as in Example \ref{ex:non_transa}, using the ODE for $h_3'$ and $h_5'$ we find 
$$
h_3' = \tfrac{2}{l_3}\Big(1- 2\alpha\Big(\tfrac{1}{l_2^2} + \tfrac{1}{l_{4}^2}\Big)\Big) = 0,\qquad 
h_5' = \tfrac{2}{l_5}\Big(1- \tfrac{1}{l_4^2}\Big) =0.
$$
Thus,  
$$
0 = 1 - \frac{2\alpha}{l_2^2} - \frac{2\alpha}{l_{4}^2} = - \frac{2\alpha}{l_{2}^2} <0,
$$
a contradiction and $\theta_6=\pi/2.$ Using this argument and niceness inductively, we conclude all angles are $\pi/2,$ and hence, $\Gamma$ is concave.

Assume that $\theta_2=3\pi/2.$  Suppose that $\theta_{4i-2} = 3\pi/2$ and the $i$-th rectangle has angles $\theta_{4i-1}=\ldots=\theta_{4i+2}=\pi/2.$ If $\theta_{4i+3}=\pi/2,$ then the $(i-1)$-th and $i$-th rectangles are as in Example \ref{ex:non_transa}: using the ODE for $h_{4i-1}'$ and $h_{4i+1}'$ we get a contradiction as above. This contradiction together with the assumption $\theta_2=3\pi/2$ show that $\Gamma$ starts with a concave rectangle, each chain of consecutive concave   rectangles may end with a single convex rectangle and the last rectangle is also concave.
\end{proof}

\section{Final remarks in the unbounded case}\label{sec:open_problems}

In this final section we state some conjectures related to the crystalline elastic evolution of unbounded curves with a crystalline anisotropy $\norm$ in $\R^2.$

\begin{conjecture}
Let $\Gamma^0:=\cup_{i=1}^nS_i^0$ be an unbounded  admissible polygonal curve and $\{\Gamma(t)\}_{t\in[0,T^\dag)}$ be the unique maximal crystalline elastic flow starting from $\Gamma^0.$ If $T^\dag<+\infty,$ then: 

\begin{itemize}
\item[(a)] $t\mapsto \Gamma(t)$ is Kuratowski continuous in $[0,T^\dag],$

\item[(b)] the set 
$$
\bigcup_{t\in[0,T^\dag]} \bigcup_{i=2}^{n-1} S_i(t)
$$
is bounded,

\item[(c)] there exists an index $i\in\{2,\ldots,n-1\}$ such that 
$$
\lim\limits_{t\nearrow T^\dag}\,\,\sH^1(S_i(t)) = 0.
$$
\end{itemize}
\end{conjecture}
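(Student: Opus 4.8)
The plan is to transcribe the proof of Theorem~\ref{teo:existence}(c), treating $S_1$ and $S_n$ as fixed half-lines (since $c_1=c_n=0$ forces $h_1\equiv h_n\equiv 0$, so that only their endpoints $G_1(t)$ and $G_{n-1}(t)$ slide along the fixed supporting lines $\ell_{S_1}$, $\ell_{S_n}$) and localising every energy identity on a suitable disc. The first, and hardest, task is a confinement statement: one must rule out that the moving part $\bigcup_{i=2}^{n-1}S_i(t)$ escapes to infinity as $t\nearrow T^\dag$. Heuristically a finite maximal time leaves no room for unbounded translation velocities, and by \eqref{elastic_ode123} the velocities $h_i'$ can blow up only if some length $\sH^1(S_j)$, $j\in\{i-1,i,i+1\}$, tends to $0$; the difficulty is to control the feedback between a vanishing length and a diverging height. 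I would attempt this through a continuation argument along an increasing family of discs $D_R$: on the maximal interval $[0,T_R]$ on which the moving part stays inside $D_R$ one has, by the localised dissipation \eqref{grad_floqasdf} and the monotonicity of $t\mapsto\cF_\alpha(\Gamma(t),D_R)$, the bounds $\sum_{i=2}^{n-1}\sH^1(S_i(t))\le c_\norm^{-1}\cF_\alpha(\Gamma^0,D_R)$ and $\int_0^{T_R}\sum_i|h_i'|^2\sH^1(S_i)\,dt\le c_\norm^{-1}\cF_\alpha(\Gamma^0,D_R)$; combining these with the elementary geometry $|G_1(t)-G_1(0)|\le |h_2(t)|/|\sin\theta_2|$ and its iterates along the chain, one should be able to show, for $R$ large in terms of $\Gamma^0$ only, that $T_R=T^\dag$.

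Once the moving part is contained in a fixed disc $D$, the rest follows the closed case. Corollary~\ref{cor:segments_nonzero_curva} applied with this $D$ gives $\sH^1(S_i(t))\ge \alpha c_\norm c_i^2\sH^1(F_i)^2/\cF_\alpha(\Gamma^0,D)>0$ for every $i$ with $c_i\ne 0$ and $\sum_{i=2}^{n-1}\sH^1(S_i(t))\le c_\norm^{-1}\cF_\alpha(\Gamma^0,D)$ for all $t\in[0,T^\dag)$. If all bounded segments stayed bounded away from $0$, the short-time existence step of Theorem~\ref{teo:existence} (whose constants depend only on $\alpha$, $\norm$, the angles and the reciprocals of the lengths, not on position) applied at time $T^\dag-\epsilon$ would extend the flow past $T^\dag$, contradicting maximality; hence there is $i_0\in\{2,\dots,n-1\}$, necessarily with $c_{i_0}=0$ by \eqref{length_bnd13}, with $\liminf_{t\nearrow T^\dag}\sH^1(S_{i_0}(t))=0$. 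To upgrade this to a genuine limit I would reproduce the case analysis of the proof of Theorem~\ref{teo:existence}(c): segments with $c_i\ne 0$ have $|h_i'|$ bounded on $(0,T^\dag)$, hence $h_i\in C^0([0,T^\dag])$; a segment with $c_i=0$ and both neighbours of zero curvature has $h_i\equiv 0$; a segment with $c_i=0$ and exactly one neighbour of nonzero curvature is handled by the Cauchy inequality together with the integrability of $|h_i'|^2\sH^1(S_i)$ coming from \eqref{grad_floqasdf}; finally a segment with $c_i=0$ between two nonzero-curvature segments either keeps its length bounded below (if the two neighbouring supporting lines stay at positive distance at $T^\dag$) or has $\sH^1(S_i(t))\to 0$. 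Collecting these, at least one segment with zero $\norm$-curvature genuinely disappears, which is (c).

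For the Kuratowski continuity and (b): write $S_i(t)=[G_i(t),G_{i+1}(t)]$. For each $i$ such that both $h_{i-1}$ and $h_i$ extend continuously to $T^\dag$, the supporting lines converge to two incident lines $L_{i-1}^*,L_i^*$, so $G_i(t)\to G_i^*:=L_{i-1}^*\cap L_i^*$; for the remaining (oscillating, i.e.\ collapsing) indices, $G_i(t)$ converges to a connected subset of a single limiting line. Assembling the well-defined points $G_{i_j}^*$ into a polygonal curve $\Gamma^*$ and checking, via admissibility of $\Gamma(t)$, that consecutive normals of $\Gamma^*$ are adjacent outer normals of $W^\norm$, one obtains an admissible limit; the two half-lines of $\Gamma(t)$ converge as well since their supporting lines are fixed and their endpoints $G_1(t),G_{n-1}(t)$ converge. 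The upper and lower Kuratowski inclusions $\Gamma^*=K\text{-}\lim_{t\nearrow T^\dag}\Gamma(t)$ then follow verbatim as in the closed case, and the boundedness in (b) is exactly the confinement proved in the first step together with $G_i^*\in D$.

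The only genuinely new obstacle compared with the closed case is thus the first step, the a priori confinement of the moving segments to a fixed disc: the localisation of the energy no longer yields it for free, and the grim-reaper solutions of Example~\ref{ex:translate} show that, without the standing assumption $T^\dag<+\infty$, the moving part really can run off to infinity. Turning the heuristic ``finite time forbids unbounded velocity'' into a quantitative estimate that closes the continuation argument is precisely where the work lies, and is the reason this case is left as the stated conjecture.
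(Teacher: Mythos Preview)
This statement is a \emph{conjecture} in the paper, not a theorem: the paper offers no proof, only the remark that the argument should ``run along the lines of Theorem~\ref{teo:existence}, however, due to unboundedness, we cannot drop $D$ in Corollary~\ref{cor:segments_nonzero_curva} and hence, to prevent a translation to infinity in a finite time, we need some sort of rescaling.'' Your outline is therefore not competing against an existing proof but against an acknowledged open problem, and you correctly recognise this in your final paragraph.

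Your strategy matches the paper's expectation exactly: adapt the proof of Theorem~\ref{teo:existence}(c) by localising the energy on a disc, and note that once confinement is secured the remainder (the case analysis on $c_i$, the upgrade from $\liminf$ to $\lim$, the vertex-by-vertex Kuratowski limit) goes through verbatim. You also correctly isolate the single genuine obstacle, namely the a priori confinement of the moving segments to a fixed disc on $[0,T^\dag)$. Where you and the paper diverge slightly is in the proposed tool: the paper hints at a rescaling argument, whereas you sketch a continuation scheme on an increasing family of discs $D_R$. Your scheme, as written, does not close: the bounds you extract from $\cF_\alpha(\Gamma^0,D_R)$ grow with $R$ (the localised energy contains the half-line contributions $\norm^o(\nu_{S_1})\sH^1(D_R\cap S_1)$ and $\norm^o(\nu_{S_n})\sH^1(D_R\cap S_n)$, which are of order $R$), so the inequality you would need to show $T_R=T^\dag$ for $R$ large has no obvious monotonicity in $R$. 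You are honest about this, and it is precisely why the statement remains conjectural.
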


We expect the proof of this conjecture runs along the lines of Theorem \ref{teo:existence}, however, due to  unboundedness, we cannot drop $D$ in Corollary \ref{cor:segments_nonzero_curva} and hence, to prevent a translation to infinity in a finite time, we need some sort of rescaling. If the conjecture is true, we can restart the flow after singularity inductively and, as in Theorem \ref{teo:continue_flows},  we can define a unique globally defined  Kuratowski continuous elastic flow $\{\Gamma(t)\}_{t\ge0}.$ %The main difficulty here is the finiteness of velocity of segments for finite times. Unlike the bounded case, now we do not have estimate involving $\cF_\alpha(\Gamma^0),$ which is only defined locally. 

Another problem is related to the long-time behaviour of flow.

\begin{conjecture}
Let $\Gamma^0:=\cup_{i=1}^nS_i^0$ be an unbounded admissible polygonal curve and $\{\Gamma(t)\}_{t\ge0}$ be the globally defined elastic flow with finitely many restarts. 

\begin{itemize}
\item[(a)] Assume that half-lines $S_1^0$ and $S_n^0$ are not parallel. Then all bounded segments of $\Gamma(t)$ diverge to infinity as $t\to+\infty.$

\item[(b)] Assume that half-lines $S_1^0$ and $S_n^0$ are  parallel and ``co-directed'', i.e., $\nu_{S_1^0}+\nu_{S_2^0}=0.$ Then there exists a family $\{p_t\}\subset\R^2$ of vectors such that, as $t\to+\infty,$ the translated curves $p_t+\Gamma(t)$ converge to a unbounded translating solution $\Gamma^\infty.$ 

\item[(c)] Assume that half-lines $S_1^0$ and $S_n^0$ are  parallel and opposite-directed, i.e., $\nu_{S_1^0}=\nu_{S_2^0}.$ Then there exists a stationary curve $\Gamma^\infty$ such that $\Gamma(t)\overset{K}{\to} \Gamma^\infty$ as $t\to+\infty.$

\end{itemize}
\end{conjecture}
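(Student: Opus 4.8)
The plan is to reduce the evolution of the bounded part of $\Gamma$ to a finite-dimensional gradient flow and to read off the trichotomy (a)--(c) from the behaviour of its linear part. Since the half-lines $S_1,S_n$ keep their directions and satisfy $h_1\equiv h_n\equiv0$, the shape of $\Gamma(t)$ (relative to the two fixed supporting lines of the arms) is encoded by the vector $h=(h_2,\dots,h_{n-1})$ of interior signed heights. By \eqref{length_changed_hhh} each interior length $\sH^1(S_i)$ is affine in $h$, so $h$ ranges over an open convex polytope $\Omega:=\{h:\ \sH^1(S_i(h))>0,\ i=2,\dots,n-1\}$, and the geometric curve is reconstructed from $h$ by Lemma \ref{lem:construc_paral_curve} together with the semigroup property. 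Fixing any disc $D$, set
\[
\Phi(h):=\cF_\alpha(\Gamma(h),D)=L(h)+\alpha\sum_{i\,:\,c_i\ne0}\frac{\norm^o(\nu_{S_i})\,c_i^2\,\sH^1(F_i)^2}{\sH^1(S_i(h))}=:L(h)+B(h),
\]
where $L$ is the anisotropic length (affine in $h$, including the two half-line boundary terms $-\norm^o(\nu_{S_1})h_2/\sin\theta_2$ and $-\norm^o(\nu_{S_n})h_{n-1}/\sin\theta_n$, whose $D$-dependence is a mere additive constant) and $B\ge0$ is the barrier from the nonzero-curvature segments. Exactly as in the proof of Theorem \ref{teo:cl_elastc_gradflow}, the system \eqref{elastic_ode123} is the gradient flow of $\Phi$ for the metric $g_h(u,v)=\sum_i\norm^o(\nu_{S_i})\,\sH^1(S_i(h))^{-1}u_iv_i$, and \eqref{grad_floqasdf}--\eqref{ZgEW} read $\tfrac{d}{dt}\Phi(h(t))=-|h'(t)|_g^2\le0$. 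This is precisely the device that removes the localization difficulty flagged above. Since there are only finitely many restarts (by the analogue of Theorem \ref{teo:continue_flows}) and $\Phi$ cannot increase across a restart, I may work on the final epoch.

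The governing principle is the interplay of the constant drift $\ell:=\nabla L$ (which is $D$-independent and computable from \eqref{ashuc7bnn}) with the recession cone $\mathrm{rec}\,\Omega$ and with the \emph{slide space} $\mathcal{T}:=\{p\in\R^2:\ p\parallel S_1\ \text{and}\ p\parallel S_n\}$ of rigid translations that keep both arm-lines fixed (such $p$ leave all interior lengths, hence $B$, unchanged). When $S_1,S_n$ are non-parallel, $\mathcal{T}=\{0\}$, and the fate of the flow depends on whether $\ell$ is confining on $\Omega$ (converging arms, $\Phi$ coercive) or points into an unbounded recession along which $B$ stays bounded (diverging arms, $\Phi$ unbounded below). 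When $S_1,S_n$ are parallel, $\mathcal{T}$ is one-dimensional, spanned by the common arm direction; the co-directed case $\nu_{S_1}+\nu_{S_n}=0$ (b) is the one in which the drift is nonzero along $\mathcal{T}$ (shape-preserving descent, i.e.\ translation), while the opposite-directed case $\nu_{S_1}=\nu_{S_n}$ (c) is confining. I would verify these equivalences by the explicit sign of $\ell$ against $\mathrm{rec}\,\Omega$ and $\mathcal{T}$. I emphasize that this already shows statement (a) must be split: Example \ref{ex:triangle} exhibits non-parallel \emph{converging} arms whose bounded segment does \emph{not} diverge but relaxes to a stationary triangle, so the literal claim of (a) holds only for the diverging subcase, the converging subcase yielding a stationary limit.

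For the confining cases (namely (c) and the converging subcase of (a)), $\Phi$ is proper and bounded below on $\Omega$: the barrier and Corollary \ref{cor:segments_nonzero_curva} bound the nonzero-curvature lengths away from $0$, while coercivity of $L$ bounds all lengths above, so the trajectory stays in a compact subset of $\Omega$. The finite-dimensional \L ojasiewicz gradient inequality for the real-analytic $\Phi$ (the intrinsic form of Proposition \ref{prop:loja_simon_ineq}) then upgrades $\int_0^\infty|h'|_g^2<\infty$ to full convergence $h(t)\to h^\infty$ with $\nabla\Phi(h^\infty)=0$, repeating the estimates \eqref{hatagsdf}--\eqref{fahstdzvb} of Theorem \ref{teo:conver_stationar_sol}; since $\nabla\Phi(h^\infty)=0$ is the stationarity system of Definition \ref{def:reg_stationar}, one gets $\Gamma(t)\overset{K}{\to}\Gamma^\infty$ stationary, proving (c). For (b) one passes to the quotient $\Omega/\mathcal{T}$, on which $B$ is proper and bounded below; the same \L ojasiewicz argument gives convergence of the shape to a critical profile, and the accumulated $\mathcal{T}$-drift supplies the translation $p_t$ and the velocity $\lambda$, so that $p_t+\Gamma(t)\to\Gamma^\infty$ with $\Gamma^\infty$ translating in the sense of Definition \ref{def:translate_curves}. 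In the diverging subcase of (a), the descent ray forces $h(t)$ to leave every compact set while the barrier pins all nonzero-curvature lengths to a bounded, strictly positive range; hence $|H(S_i^0,S_i(t))|\to\infty$ for the bounded segments, i.e.\ they diverge to infinity.

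The genuine obstacle is confined to the unbounded orbits, that is case (b) and the diverging part of (a), where the analytic \L ojasiewicz inequality no longer applies on a noncompact set. The crux is to secure \emph{uniform-in-time} lower bounds on the lengths of the nonzero-curvature segments along an escaping trajectory; a single localized energy bound is insufficient, which is exactly the point at which a rescaling is needed, and the translation-invariant reduced quotient introduced above is meant to replace it. In (b) one must additionally prove that the drift rate along $\mathcal{T}$ converges and that the shape does not degenerate as it rides off to infinity (no interior curvature segment collapses, no new escape channel opens), whereas in the diverging (a) one must show the interior block escapes coherently rather than fragmenting. Both demand a \L ojasiewicz-type inequality ``at infinity'' for $B$ on the quotient, coupled with monotonicity of the drift coordinate, and this asymptotic analysis of the gradient flow in the recession cone is where I expect the main work to lie.
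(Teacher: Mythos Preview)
This statement is explicitly listed as a \emph{conjecture} in the paper's final section on open problems; the paper offers no proof of it, so there is nothing to compare your proposal against. What you have written is a plausible roadmap rather than a proof, and you are candid about this in your last paragraph. One genuinely useful observation you make is that part (a) as stated is inconsistent with Example~\ref{ex:triangle}: there the half-lines are non-parallel but \emph{converging}, and the bounded segment relaxes to a stationary length rather than diverging. The conjecture therefore needs, at minimum, the refinement you suggest.

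Your reduction to a finite-dimensional gradient flow for $\Phi(h)=L(h)+B(h)$ is the right framework, and for the confining situations ((c) and the converging subcase of (a)) the argument via coercivity and the finite-dimensional \L ojasiewicz inequality is essentially the same machinery the paper uses for closed curves in Theorems~\ref{teo:conver_stationar_sol}--\ref{teo:long_time_general}. However, even here there is a gap: coercivity of $L$ on $\Omega$ in case (c) is asserted but not verified, and the slide space $\mathcal{T}$ is still one-dimensional in (c), so you must show that the drift $\ell=\nabla L$ actually vanishes along $\mathcal{T}$ (this is a computation you defer). For (b) and the diverging subcase of (a), as you yourself flag, the substantive work is missing: the quotient-by-$\mathcal{T}$ argument requires proving that the shape variables stay in a compact region of the quotient, which in turn needs uniform-in-time lower bounds on \emph{all} segment lengths (including the zero-curvature ones) along an escaping trajectory. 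The paper's Theorem~\ref{teo:long_time_general} supplies this in the closed case via the generalized-stationary analysis, but no analogue is available here, and the ``\L ojasiewicz at infinity'' you invoke is not a standard tool. In short, your proposal correctly identifies the governing structure and a real defect in the conjecture's formulation, but it does not close the problem.
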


\subsection{Data availability}

The paper has no associated data.

\subsection*{Acknowledgment}
G. Bellettini  and M. Novaga acknowledge support from GNAMPA of INdAM. Sh. Kholmatov acknowledges support from the Austrian Science Fund (FWF) Stand-Alone project P 33716.


\begin{thebibliography}{oo}

\bibitem{Andrews:2002} B. Andrews: \href{https://doi.org/10.4310/AJM.2002.v6.n1.a6}{Singularities in crystalline curvature flows}. Asian J. Math. {\bf6} (2002), 101--121.

\bibitem{Bellettini:2004_ha} G. Bellettini: \href{https://doi.org/10.1023/B:MAHU.0000038975.39354.ef}{On facet breaking for crystalline mean curvature flow in 3D}. Period. Math. Hungar. {\bf48} (2004),  185--206.

\bibitem{BM:2009} G. Bellettini, L. Mugnai: \href{https://doi.org/10.4064/bc86-0-2}{Anisotropic geometric functionals and gradient flows}.  In: \emph{Nonlocal and abstract parabolic equations and their applications}. Banach Center Publ.  {\bf 86}, 21--43, Polish Acad. Sci. Inst. Math., Warsaw, 2009.

\bibitem{BNP:1999} G. Bellettini, M. Novaga, M. Paolini: \href{https://doi.org/10.4171/IFB/3}{Facet-breaking for three-dimensional crystals evolving by mean curvature}.  Interfaces Free Bound. {\bf1}  (1999), 39--55. 

\bibitem{BP:1996} G. Bellettini, M. Paolini: \href{https://doi.org/10.14492/hokmj/1351516749}{Anisotropic motion by mean curvature in the context of Finsler geometry}.
Hokkaido Math. J. {\bf25} (1996), 537--566.  

\bibitem{BBBP:1997} O. Benois, T. Bodineau, P. Butt\'a, E. Presutti: \href{https://math-mprf.org/journal/articles/id781/}{On the validity of the van der Waals theory of surface tension}. Markov Process. Related Fields {\bf 3} (1997), 175--198.


\bibitem{BM:1988} E. Bierstone, P.D. Milman:
\href{http://www.numdam.org/item?id=PMIHES_1988__67__5_0}{Semianalytic and subanalytic sets}. Inst. Hautes
\'Etudes Sci. Publ. Math. {\bf 67} (1988), 5--42.

\bibitem{Cahn:1991}  J. Cahn: \href{https://doi.org/10.1016/0956-7151(91)90001-H}{Stability, microstructural evolution, grain growth, and coarsening in a two-dimensional two-phase microstructure}. Acta Metall. Mater. {\bf 39} (1991), 2189--2199.

\bibitem{CHT:1992} J.W. Cahn, C.A. Handwerker, J.E. Taylor: \href{https://doi.org/10.1016/0956-7151(92)90090-2}{Overiew no. 98. I -- Geometric models of crystal growth}. Acta Metall. Mater. {\bf40} (1992), 1443--1474.
  
\bibitem{CT:1994} J.W. Cahn, J.E. Taylor: \href{https://doi.org/10.1016/0956-7151(94)90123-6}{Overview no. 113 -- Surface motion by surface diffusion}. Acta Metall. Mater. {\bf42} (1994), 1045--1063.

\bibitem{CRCT:1995} W.C. Carter, A.R. Roosen, J.W. Cahn, J.E. Taylor: \href{https://doi.org/10.1016/0956-7151(95)00134-H}{Shape evolution by surface diffusion and surface attachment limited kinetics on completely faceted surfaces}. Acta Metall. Mater. {\bf43} (1995), 4309--4323.

\bibitem{DKSch:2002} G. Dziuk, E. Kuwert,   R. Sch\"atzle: \href{https://doi.org/10.1137/S0036141001383709}{Evolution of elastic curves in $R^n$: existence and computation}. SIAM J. Math. Anal. {\bf33} (2002), 1228--1245.

\bibitem{ELM:2021} Y. Epshteyn, Ch. Liu, M. Mizuno: \href{https://doi.org/10.1137/19M1265855}{Motion of grain boundaries with dynamic lattice misorientations and with triple junctions drag}. SIAM J. Math. Anal. {\bf 53} (2021), 3072--3097.

\bibitem{FFLM:2011} I. Fonseca, N. Fusco, G. Leoni, V. Millot: \href{https://doi.org/10.1016/j.matpur.2011.07.003}{Material voids in elastic solids with anisotropic surface energies}. J. Math. Pures Appl. \textbf{96} (2011), 591--639.

\bibitem{GN:2000} H. Garcke, B. Nestler: \href{https://doi.org/10.1142/S021820250000046X}{A mathematical model for grain growth in thin metallic films}. Math. Models Methods Appl. Sci. {\bf10} (2000), 895--921.

\bibitem{Falconer:1985} K. Falconer: The Geometry of Fractal Sets. Cambridge University Press, Cambridge, 1985.

\bibitem{GG:2023} M.-H. Giga, Y. Giga: \href{https://doi.org/10.3934/dcds.2022160}{Crystalline surface diffusion flow for graph-like curves}. Discrete Contin. Dyn. Syst. {\bf 43} (2023), 1436--1468.




\bibitem{GGM:1998} Y. Giga, M. Gurtin, J. Matias: \href{https://doi.org/10.1007/BF03167395}{On the dynamics of crystalline motion}. Japan J. Indust. Appl. Math. {\bf 15} (1998), 7--50.


\bibitem{GP:2022} Y. Giga, N. Po\v{z}\'ar: \href{https://doi.org/10.1142/S1664360722300043}{Motion by crystalline-like mean curvature: a survey}. Bull. Math. Sc.
{\bf12} (2022), 2230004.

\bibitem{GR:2008} Y. Giga, P. Rybka:\href{https://doi.org/10.1007/s12220-007-9004-9}{Facet bending in the driven crystalline curvature flow in the plane}. J. Geom. Anal.  {\bf18} (2008), 109--147. 

\bibitem{Herring:1951}  C. Herring: \href{https://doi.org/10.1103/PhysRev.82.87}{Some theorems on the free energies of crystal surfaces}. Phys. Rev. {\bf82} (1951), 87--93.

\bibitem{Herring:1999} C. Herring: Surface tension as a motivation for sintering. Fundamental Contributions to the Continuum Theory of Evolving Phase Interfaces in Solids. Springer, Berlin, Heidelberg, 1999.
 

\bibitem{Hironaka:1973} H. Hironaka: Subanalytic sets. In: \emph{Number theory, algebraic geometry and commutative  algebra, in honor of Yasuo Akizuki},453--493 (1973), Kinokuniya Book Store, Tokyo.




\bibitem{IS:1998} K. Ishii, H.M. Soner: \href{https://doi.org/10.1137/S0036141097317347}{Regularity and convergence of crystalline motion}. SIAM J. Math. Anal. {\bf30} (1998), 19--37
\black 


\bibitem{KL:2001} D. Kinderlehrer, C. Liu: \href{https://doi.org/10.1142/S0218202501001069}{Evolution of grain boundaries}. Math. Models Methods Appl. Sci.  {\bf11} (2001), 713--729.


\bibitem{Loja:1995} S. Lojasiewicz: On semi-analytic and subanalytic geometry. In: \emph{Panoramas of mathematics} 
(Warsaw, 1992/1994). Banach Center Publ. {\bf 34} (1995), 89--104,
Polish Acad. Sci. Inst. Math., Warsaw.


\bibitem{Mantegazza:2002} C. Mantegazza: \href{https://doi.org/10.1007/s00039-002-8241-0}{Smooth geometric evolutions of hypersurfaces}. Geom. Funct. Anal. {\bf 12} (2002), 138--182. 

\bibitem{MPP:2021} C. Mantegazza, A.  Pluda, M. Pozzetta: \href{https://doi.org/10.1007/s00032-021-00327-w}{A survey of the elastic flow of curves and networks}. Milan J. Math. {\bf89} (2021), 59--121.

\bibitem{MP:2021_cvpde} C. Mantegazza, M. Pozzetta: \href{https://doi.org/10.1007/s00526-020-01916-0}{The
Lojasiewicz-Simon inequality for the elastic flow}. Calc. Var. Partial Differential Equations {\bf 60} (2021).



\bibitem{Mullins:1956} W. Mullins: \href{
https://doi.org/10.1063/1.1722511
}{Two-dimensional motion of idealized grain boundaries}. J. Appl. Phys. {\bf27} (1956), 900--904.    

\bibitem{Taylor:1978} J.E. Taylor: \href{https://doi.org/10.1090/S0002-9904-1978-14499-1}{Crystalline variational problems}. Bull. Amer. Math. Soc. {\bf 84} (1978), 568--588.

\bibitem{Taylor:1992} J. Taylor: \href{https://doi.org/10.1016/0956-7151(92)90091-R}{Overview no. 98. II -- mean curvature and weighted mean curvature}. Acta Metall. Mater. {\bf40} (1992), 1475--1485.

\end{thebibliography}
\end{document}